\numberwithin{equation}{section}
\numberwithin{figure}{section}
\numberwithin{table}{section}
\theoremstyle{plain}
\newtheorem{thm}{\protect\theoremname}[section]
\theoremstyle{definition}
\newtheorem{defn}[thm]{\protect\definitionname}
\theoremstyle{remark}
\newtheorem{rem}[thm]{\protect\remarkname}
\theoremstyle{plain}
\newtheorem{lem}[thm]{\protect\lemmaname}
\theoremstyle{plain}
\newtheorem{prop}[thm]{\protect\propositionname}
\theoremstyle{plain}
\newtheorem{cor}[thm]{\protect\corollaryname}
\theoremstyle{definition}
\newtheorem{example}[thm]{\protect\examplename}
\theoremstyle{plain}
\newtheorem{conjecture}[thm]{\protect\conjecturename}
\DeclareMathOperator{\Add}{\textup{Add}}
\DeclareMathOperator{\Aff}{\textup{Aff}}
\DeclareMathOperator{\Alg}{\textup{Alg}}
\DeclareMathOperator{\Ann}{\textup{Ann}}
\DeclareMathOperator{\Arr}{\textup{Arr}}
\DeclareMathOperator{\Art}{\textup{Art}}
\DeclareMathOperator{\Ass}{\textup{Ass}}
\DeclareMathOperator{\Aut}{\textup{Aut}}
\DeclareMathOperator{\Autsh}{\underline{\textup{Aut}}}
\DeclareMathOperator{\Bi}{\textup{B}}
\DeclareMathOperator{\CAdd}{\textup{CAdd}}
\DeclareMathOperator{\CAlg}{\textup{CAlg}}
\DeclareMathOperator{\CMon}{\textup{CMon}}
\DeclareMathOperator{\CPMon}{\textup{CPMon}}
\DeclareMathOperator{\CRings}{\textup{CRings}}
\DeclareMathOperator{\CSMon}{\textup{CSMon}}
\DeclareMathOperator{\CaCl}{\textup{CaCl}}
\DeclareMathOperator{\Cart}{\textup{Cart}}
\DeclareMathOperator{\Cl}{\textup{Cl}}
\DeclareMathOperator{\Coh}{\textup{Coh}}
\DeclareMathOperator{\Coker}{\textup{Coker}}
\DeclareMathOperator{\Cov}{\textup{Cov}}
\DeclareMathOperator{\Der}{\textup{Der}}
\DeclareMathOperator{\Div}{\textup{Div}}
\DeclareMathOperator{\End}{\textup{End}}
\DeclareMathOperator{\Endsh}{\underline{\textup{End}}}
\DeclareMathOperator{\Ext}{\textup{Ext}}
\DeclareMathOperator{\Extsh}{\underline{\textup{Ext}}}
\DeclareMathOperator{\FAdd}{\textup{FAdd}}
\DeclareMathOperator{\FCoh}{\textup{FCoh}}
\DeclareMathOperator{\FGrad}{\textup{FGrad}}
\DeclareMathOperator{\FLoc}{\textup{FLoc}}
\DeclareMathOperator{\FMod}{\textup{FMod}}
\DeclareMathOperator{\FPMon}{\textup{FPMon}}
\DeclareMathOperator{\FRep}{\textup{FRep}}
\DeclareMathOperator{\FSMon}{\textup{FSMon}}
\DeclareMathOperator{\FVect}{\textup{FVect}}
\DeclareMathOperator{\Fibr}{\textup{Fibr}}
\DeclareMathOperator{\Fix}{\textup{Fix}}
\DeclareMathOperator{\Fl}{\textup{Fl}}
\DeclareMathOperator{\Fr}{\textup{Fr}}
\DeclareMathOperator{\Funct}{\textup{Funct}}
\DeclareMathOperator{\GAlg}{\textup{GAlg}}
\DeclareMathOperator{\GExt}{\textup{GExt}}
\DeclareMathOperator{\GHom}{\textup{GHom}}
\DeclareMathOperator{\GL}{\textup{GL}}
\DeclareMathOperator{\GMod}{\textup{GMod}}
\DeclareMathOperator{\GRis}{\textup{GRis}}
\DeclareMathOperator{\GRiv}{\textup{GRiv}}
\DeclareMathOperator{\Gal}{\textup{Gal}}
\DeclareMathOperator{\Gl}{\textup{Gl}}
\DeclareMathOperator{\Grad}{\textup{Grad}}
\DeclareMathOperator{\Hilb}{\textup{Hilb}}
\DeclareMathOperator{\Hl}{\textup{H}}
\DeclareMathOperator{\Hom}{\textup{Hom}}
\DeclareMathOperator{\Homsh}{\underline{\textup{Hom}}}
\DeclareMathOperator{\ISym}{\textup{Sym}^*}
\DeclareMathOperator{\Imm}{\textup{Im}}
\DeclareMathOperator{\Irr}{\textup{Irr}}
\DeclareMathOperator{\Iso}{\textup{Iso}}
\DeclareMathOperator{\Isosh}{\underline{\textup{Iso}}}
\DeclareMathOperator{\Ker}{\textup{Ker}}
\DeclareMathOperator{\LAdd}{\textup{LAdd}}
\DeclareMathOperator{\LAlg}{\textup{LAlg}}
\DeclareMathOperator{\LMon}{\textup{LMon}}
\DeclareMathOperator{\LPMon}{\textup{LPMon}}
\DeclareMathOperator{\LRings}{\textup{LRings}}
\DeclareMathOperator{\LSMon}{\textup{LSMon}}
\DeclareMathOperator{\Left}{\textup{L}}
\DeclareMathOperator{\Lex}{\textup{Lex}}
\DeclareMathOperator{\Loc}{\textup{Loc}}
\DeclareMathOperator{\M}{\textup{M}}
\DeclareMathOperator{\ML}{\textup{ML}}
\DeclareMathOperator{\MLex}{\textup{MLex}}
\DeclareMathOperator{\Map}{\textup{Map}}
\DeclareMathOperator{\Mod}{\textup{Mod}}
\DeclareMathOperator{\Mon}{\textup{Mon}}
\DeclareMathOperator{\Ob}{\textup{Ob}}
\DeclareMathOperator{\Obj}{\textup{Obj}}
\DeclareMathOperator{\PDiv}{\textup{PDiv}}
\DeclareMathOperator{\PGL}{\textup{PGL}}
\DeclareMathOperator{\PML}{\textup{PML}}
\DeclareMathOperator{\PMLex}{\textup{PMLex}}
\DeclareMathOperator{\PMon}{\textup{PMon}}
\DeclareMathOperator{\Pic}{\textup{Pic}}
\DeclareMathOperator{\Picsh}{\underline{\textup{Pic}}}
\DeclareMathOperator{\Pro}{\textup{Pro}}
\DeclareMathOperator{\Proj}{\textup{Proj}}
\DeclareMathOperator{\QAdd}{\textup{QAdd}}
\DeclareMathOperator{\QAlg}{\textup{QAlg}}
\DeclareMathOperator{\QCoh}{\textup{QCoh}}
\DeclareMathOperator{\QMon}{\textup{QMon}}
\DeclareMathOperator{\QPMon}{\textup{QPMon}}
\DeclareMathOperator{\QRings}{\textup{QRings}}
\DeclareMathOperator{\QSMon}{\textup{QSMon}}
\DeclareMathOperator{\R}{\textup{R}}
\DeclareMathOperator{\Rep}{\textup{Rep}}
\DeclareMathOperator{\Rings}{\textup{Rings}}
\DeclareMathOperator{\Riv}{\textup{Riv}}
\DeclareMathOperator{\SFibr}{\textup{SFibr}}
\DeclareMathOperator{\SMLex}{\textup{SMLex}}
\DeclareMathOperator{\SMex}{\textup{SMex}}
\DeclareMathOperator{\SMon}{\textup{SMon}}
\DeclareMathOperator{\SchI}{\textup{SchI}}
\DeclareMathOperator{\Sh}{\textup{Sh}}
\DeclareMathOperator{\Soc}{\textup{Soc}}
\DeclareMathOperator{\Spec}{\textup{Spec}}
\DeclareMathOperator{\Specsh}{\underline{\textup{Spec}}}
\DeclareMathOperator{\Stab}{\textup{Stab}}
\DeclareMathOperator{\Supp}{\textup{Supp}}
\DeclareMathOperator{\Sym}{\textup{Sym}}
\DeclareMathOperator{\TMod}{\textup{TMod}}
\DeclareMathOperator{\Top}{\textup{Top}}
\DeclareMathOperator{\Tor}{\textup{Tor}}
\DeclareMathOperator{\Vect}{\textup{Vect}}
\DeclareMathOperator{\alt}{\textup{ht}}
\DeclareMathOperator{\car}{\textup{char}}
\DeclareMathOperator{\codim}{\textup{codim}}
\DeclareMathOperator{\degtr}{\textup{degtr}}
\DeclareMathOperator{\depth}{\textup{depth}}
\DeclareMathOperator{\divis}{\textup{div}}
\DeclareMathOperator{\et}{\textup{et}}
\DeclareMathOperator{\ffpSch}{\textup{ffpSch}}
\DeclareMathOperator{\h}{\textup{h}}
\DeclareMathOperator{\ilim}{\displaystyle{\lim_{\longrightarrow}}}
\DeclareMathOperator{\ind}{\textup{ind}}
\DeclareMathOperator{\indim}{\textup{inj dim}}
\DeclareMathOperator{\lf}{\textup{LF}}
\DeclareMathOperator{\op}{\textup{op}}
\DeclareMathOperator{\ord}{\textup{ord}}
\DeclareMathOperator{\pd}{\textup{pd}}
\DeclareMathOperator{\plim}{\displaystyle{\lim_{\longleftarrow}}}
\DeclareMathOperator{\pr}{\textup{pr}}
\DeclareMathOperator{\pt}{\textup{pt}}
\DeclareMathOperator{\rk}{\textup{rk}}
\DeclareMathOperator{\tr}{\textup{tr}}
\DeclareMathOperator{\type}{\textup{r}}
\DeclareMathOperator*{\colim}{\textup{colim}}
\theoremstyle{plain}
\newtheorem{thma}{Theorem}
\newtheorem{thmb}{Theorem}
\providecommand{\conjecturename}{Conjecture}
\providecommand{\corollaryname}{Corollary}
\providecommand{\definitionname}{Definition}
\providecommand{\examplename}{Example}
\providecommand{\lemmaname}{Lemma}
\providecommand{\propositionname}{Proposition}
\providecommand{\remarkname}{Remark}
\providecommand{\theoremname}{Theorem}
\begin{document}
\title{Stacks of fiber functors and Tannaka's reconstruction}
\author{Fabio Tonini}
\address{Universitá degli Studi di Firenze, Dipartimento di Matematica e Informatica
\textquoteright Ulisse Dini\textquoteright , Viale Giovanni Battista
Morgagni, 67/A, 50134 Firenze, Italy}
\email{fabio.tonini@unifi.it}

\maketitle
\global\long\def\A{\mathbb{A}}%

\global\long\def\Ab{(\textup{Ab})}%

\global\long\def\C{\mathbb{C}}%

\global\long\def\Cat{(\textup{cat})}%

\global\long\def\Di#1{\textup{D}(#1)}%

\global\long\def\E{\mathcal{E}}%

\global\long\def\F{\mathbb{F}}%

\global\long\def\GCov{G\textup{-Cov}}%

\global\long\def\Gcat{(\textup{Galois cat})}%

\global\long\def\Gfsets#1{#1\textup{-fsets}}%

\global\long\def\Gm{\mathbb{G}_{m}}%

\global\long\def\GrCov#1{\textup{D}(#1)\textup{-Cov}}%

\global\long\def\Grp{(\textup{Grps})}%

\global\long\def\Gsets#1{(#1\textup{-sets})}%

\global\long\def\HCov{H\textup{-Cov}}%

\global\long\def\MCov{\textup{D}(M)\textup{-Cov}}%

\global\long\def\MHilb{M\textup{-Hilb}}%

\global\long\def\N{\mathbb{N}}%

\global\long\def\PGor{\textup{PGor}}%

\global\long\def\Fib{\textup{Fib}}%

\global\long\def\PGrp{(\textup{Profinite Grp})}%

\global\long\def\PP{\mathbb{P}}%

\global\long\def\Pj{\mathbb{P}}%

\global\long\def\Q{\mathbb{Q}}%

\global\long\def\RCov#1{#1\textup{-Cov}}%

\global\long\def\RR{\mathbb{R}}%

\global\long\def\Sch{\textup{Sch}}%

\global\long\def\WW{\textup{W}}%

\global\long\def\Z{\mathbb{Z}}%

\global\long\def\acts{\curvearrowright}%

\global\long\def\alA{\mathscr{A}}%

\global\long\def\alB{\mathscr{B}}%

\global\long\def\arr{\longrightarrow}%

\global\long\def\arrdi#1{\xlongrightarrow{#1}}%

\global\long\def\catC{\mathscr{C}}%

\global\long\def\catD{\mathscr{D}}%

\global\long\def\catF{\mathscr{F}}%

\global\long\def\catG{\mathscr{G}}%

\global\long\def\comma{,\ }%

\global\long\def\covU{\mathcal{U}}%

\global\long\def\covV{\mathcal{V}}%

\global\long\def\covW{\mathcal{W}}%

\global\long\def\duale#1{{#1}^{\vee}}%

\global\long\def\fasc#1{\widetilde{#1}}%

\global\long\def\fsets{(\textup{f-sets})}%

\global\long\def\iL{r\mathscr{L}}%

\global\long\def\id{\textup{id}}%

\global\long\def\la{\langle}%

\global\long\def\odi#1{\mathcal{O}_{#1}}%

\global\long\def\ra{\rangle}%

\global\long\def\set{(\textup{Sets})}%

\global\long\def\sets{(\textup{Sets})}%

\global\long\def\shA{\mathcal{A}}%

\global\long\def\shB{\mathcal{B}}%

\global\long\def\shC{\mathcal{C}}%

\global\long\def\shD{\mathcal{D}}%

\global\long\def\shE{\mathcal{E}}%

\global\long\def\shF{\mathcal{F}}%

\global\long\def\shG{\mathcal{G}}%

\global\long\def\shH{\mathcal{H}}%

\global\long\def\shI{\mathcal{I}}%

\global\long\def\shJ{\mathcal{J}}%

\global\long\def\shK{\mathcal{K}}%

\global\long\def\shL{\mathcal{L}}%

\global\long\def\shM{\mathcal{M}}%

\global\long\def\shN{\mathcal{N}}%

\global\long\def\shO{\mathcal{O}}%

\global\long\def\shP{\mathcal{P}}%

\global\long\def\shQ{\mathcal{Q}}%

\global\long\def\shR{\mathcal{R}}%

\global\long\def\shS{\mathcal{S}}%

\global\long\def\shT{\mathcal{T}}%

\global\long\def\shU{\mathcal{U}}%

\global\long\def\shV{\mathcal{V}}%

\global\long\def\shW{\mathcal{W}}%

\global\long\def\shX{\mathcal{X}}%

\global\long\def\shY{\mathcal{Y}}%

\global\long\def\shZ{\mathcal{Z}}%

\global\long\def\st{\ | \ }%

\global\long\def\stA{\mathcal{A}}%

\global\long\def\stB{\mathcal{B}}%

\global\long\def\stC{\mathcal{C}}%

\global\long\def\stD{\mathcal{D}}%

\global\long\def\stE{\mathcal{E}}%

\global\long\def\stF{\mathcal{F}}%

\global\long\def\stG{\mathcal{G}}%

\global\long\def\stH{\mathcal{H}}%

\global\long\def\stI{\mathcal{I}}%

\global\long\def\stJ{\mathcal{J}}%

\global\long\def\stK{\mathcal{K}}%

\global\long\def\stL{\mathcal{L}}%

\global\long\def\stM{\mathcal{M}}%

\global\long\def\stN{\mathcal{N}}%

\global\long\def\stO{\mathcal{O}}%

\global\long\def\stP{\mathcal{P}}%

\global\long\def\stQ{\mathcal{Q}}%

\global\long\def\stR{\mathcal{R}}%

\global\long\def\stS{\mathcal{S}}%

\global\long\def\stT{\mathcal{T}}%

\global\long\def\stU{\mathcal{U}}%

\global\long\def\stV{\mathcal{V}}%

\global\long\def\stW{\mathcal{W}}%

\global\long\def\stX{\mathcal{X}}%

\global\long\def\stY{\mathcal{Y}}%

\global\long\def\stZ{\mathcal{Z}}%

\global\long\def\then{\ \Longrightarrow\ }%

\global\long\def\L{\textup{L}}%

\global\long\def\l{\textup{l}}%

\renewcommand{\Loc}{\Vect}
\begin{abstract}
Given a quasi-compact category fibered in groupoids $\stX$ and a
monoidal subcategory $\shC$ of its category of locally free sheaves
$\Vect(\stX)$, we are going to introduce the stack of fiber functors
$\Fib_{\stX,\shC}$ with source $\shC$, which comes equipped with
a map $\shP_{\shC}\colon\stX\to\Fib_{\stX,\shC}$ and a functor $\shG\colon\shC\to\Vect(\Fib_{\stX,\shC})$.

If $\shC$ generates $\QCoh(\stX)$ and $\stX$ is an fpqc stack with
quasi-affine diagonal, we show that $\shP_{\shC}\colon\stX\to\Fib_{\stX,\shC}$
is an equivalence, as it happens by Tannaka's reconstruction when
$\stX$ is an affine gerbe over a field. In general, under mild assumption
on $\shC$, e.g. $\shC=\Vect(\stX)$, we show that $\Fib_{\stX,\shC}$
is a quasi-compact fpqc stack with affine diagonal and that the image
$\shG(\shC)$ generates $\QCoh(\Fib_{\stX,\shC})$.
\end{abstract}

\section*{Introduction}

Classical (non-neutral) Tannaka's duality over a field $k$ establishes
a correspondence between $k$-Tannakian categories and affine gerbes
over $k$. More precisely given a $k$-Tannakian category $\shC$
we can define a fibered category $\Pi_{\shC}\colon\Aff/k\to(\text{groupoids)}$
by
\[
\Pi_{\shC}(A)=\{k\text{-linear, strong monoidal and exact functors \ensuremath{\shC}\ensuremath{\ensuremath{\to\Vect}(A)\}}}
\]
where $\Aff/k$ is the category of affine schemes over $k$ and $\Vect(A)=\Vect(\Spec A)$
is the category of finitely presented locally free sheaves on $\Spec A$.
It is then proved that $\Pi_{\shC}$ is an affine gerbe and the functor
\[
\shC\to\Vect(\Pi_{\shC})\comma X\longmapsto(\Gamma\ni\Pi_{\shC}(A)\mapsto\Gamma_{X}\in\Vect(A))
\]
is a $k$-linear and strong monoidal equivalence. Here and in what
follows we think sheaves (e.g locally free, quasi-coherent) on a fibered
category as functors from the fibered category itself to the corresponding
fibered category of sheaves (see \cite[Section 1]{Tonini2014} for
details).

Conversely if $\Delta$ is an affine gerbe over $k$ then $\Vect(\Delta)$
is a $k$-Tannakian category and the functor 
\[
\Delta\to\Pi_{\Vect(\Delta)}\comma(s\colon\Spec A\to\Delta)\longmapsto(s_{|\Vect(\Delta)}^{*}\colon\Vect(\Delta)\to\Vect(A))
\]
is an equivalence. This is called Tannaka's reconstruction while the
previous equivalence is called Tannaka's recognition.

In this paper we want to generalize Tannaka's reconstruction in two
ways: consider more general fibered categories and proper subcategories
of the category of locally free sheaves and introduce generalized
stacks of fiber functors.

Fix a base commutative ring $R$. Consider a category fibered in groupoids
$\stX$ over $\Aff/R$ and a monoidal subcategory $\shC\subseteq\Loc(\stX)$
that we assume is closed under taking duals. Let $\stY$ be another
category fibered in groupoids over $R$. Given an $R$-linear functor
$\Gamma\colon\shC\to\Vect(\stY)$ we say that $\Gamma$ is right exact
if it is exact on right exact sequences (in the ambient category $\QCoh(\stX)$),
whose terms are direct sum of sheaves in $\shC$ (see \ref{def:definition of Fib stacks}
for a precise definition). We define $\Fib_{\stX,\shC}(\stY)$ as
the groupoid of $R$-linear, right exact and strong monoidal functors
$\Gamma\colon\shC\to\Vect(\stY)$. We define the category fibered
in groupoids $\Fib_{\stX,\shC}\colon\Aff/R\to(\text{groupoids})$
by
\[
\Fib_{\stX,\shC}(A)=\Fib_{\stX,\shC}(\Spec A)=\{R\text{-linear, strong monoidal and right exact functors }\text{\ensuremath{\shC}\ensuremath{\ensuremath{\to\Vect}(A)\}}}
\]
We will show that under mild hypothesis on $\shC$ the category $\Fib_{\stX,\shC}$
is automatically fibered in groupoids even without forcing it (see
\ref{thm:when FibX,C has the resolution property}). Notice also that
$\Fib_{\stX,\shC}$ is a stack for the fpqc topology. The analogy
with classical Tannaka's duality is that $\Pi_{\Vect(\Delta)}=\Fib_{\Delta,\Vect(\Delta)}$
for an affine gerbe $\Delta$ over $R=k$ (see \ref{rem:comparison with classical tannakas reconstruction}).

There are two natural functors. An $R$-linear, strong monoidal and
right exact functor
\[
\shG\colon\shC\to\Vect(\Fib_{\stX,\shC})\comma\shG_{\E}(\Gamma\ni\Fib_{\stX,\shC}(A))=\Gamma_{\E}\in\Vect(A)
\]
and a map of fibered categories
\[
\shP_{\shC}\colon\stX\arr\Fib_{\stX,\shC}\comma(s\colon\Spec A\to\stX)\longmapsto(s_{|\shC}^{*}\colon\shC\arr\Loc A)
\]
which generalize the maps defined in the case of affine gerbes.

We address two problems: when $\shP_{\shC}$ is an equivalence and,
if this is not the case, what can we say about the category $\Fib_{\stX,\shC}$.
Notice that the composition $\shP_{\shC}^{*}\circ\shG\colon\shC\to\Vect(\stX)$
is the natural inclusion, so that, if $\shP_{\shC}$ is an equivalence,
then $\shG$ is fully faithful.

By a quasi-compact fibered category we mean a category fibered in
groupoids $\stX$ admitting a map $U\to\stX$ from an affine scheme
which is representable by fpqc covering of algebraic spaces. We say
that a full subcategory $\shD\subseteq\QCoh(\stX)$ generates $\QCoh(\stX)$
if any quasi-coherent sheaf on $\stX$ is a quotient of a direct sum
of sheaves in $\shD$. We prove the following generalization of classical
Tannaka's reconstruction (see \ref{rem:comparison with classical tannakas reconstruction}).

\begin{thma} [\ref{thm: Tannaka reconstrion for Loc X}, \ref{cor:Lurie s point of view}]\label{thma}
Let $\stX$ be a quasi-compact stack over $R$ for the fpqc topology
with quasi-affine diagonal and $\shC\subseteq\Loc\stX$ be a full,
monoidal subcategory with duals generating $\QCoh\stX$. Then $\shP_{\shC}\colon\stX\to\Fib_{\stX,\shC}$
is an equivalence and, if $\stY$ is a category fibered in groupoids
over $R$, the functor 
\[
\Hom(\stY,\stX)\arr\Fib_{\stX,\shC}(\stY)\comma(\stY\arrdi f\stX)\longmapsto f_{|\shC}^{*}\colon\shC\arr\Loc(\stY)
\]
is an equivalence of categories.

\end{thma}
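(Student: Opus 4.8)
The plan is to reduce both assertions to the single statement that $\shP_{\shC}\colon\stX\to\Fib_{\stX,\shC}$ is an equivalence of fibered categories, and then to prove that by reconstructing $\stX$ from the symmetric monoidal category $\QCoh\stX$.

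First I would deduce the $\Hom(\stY,\stX)$ statement from the fact that $\shP_{\shC}$ is an equivalence. For any category fibered in groupoids $\stY$ over $R$, post-composition with $\shP_{\shC}$ is an equivalence $\Hom(\stY,\stX)\arr\Hom(\stY,\Fib_{\stX,\shC})$, and the target is identified with $\Fib_{\stX,\shC}(\stY)$ by the universal property of $\shG$ (the correspondence $g\mapsto g^{*}\shG$, with $\shG$ itself corresponding to $\id_{\Fib_{\stX,\shC}}$). Under this identification the composite sends $f\colon\stY\to\stX$ to $(\shP_{\shC}\circ f)^{*}\shG=f^{*}\shP_{\shC}^{*}\shG=f_{|\shC}^{*}$, using that $\shP_{\shC}^{*}\shG$ is the inclusion $\shC\hookrightarrow\Loc\stX$. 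So the second assertion is formal once $\shP_{\shC}$ is an equivalence.

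To prove that $\shP_{\shC}$ is an equivalence it suffices to show that $\stX(A)\arr\Fib_{\stX,\shC}(A)$ is an equivalence of groupoids for every $R$-algebra $A$. I would identify both sides with the groupoid of $R$-linear, colimit-preserving, strong symmetric monoidal functors $\QCoh\stX\to\QCoh A$. For the source this is geometric (Lurie-style) Tannaka duality, available precisely because $\stX$ is a quasi-compact fpqc stack with quasi-affine diagonal: such a functor is the same datum as a map $\Spec A\to\stX$. For the target one uses that $\shC$ generates $\QCoh\stX$ and is closed under duals: restriction along $\shC\hookrightarrow\QCoh\stX$ carries a colimit-preserving strong monoidal functor $F$ to an object of $\Fib_{\stX,\shC}(A)$, since each $F(C)$ is dualizable, hence lies in $\Loc A$, and $F$ is automatically right exact.

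The heart of the argument, and the step I expect to be the main obstacle, is to invert this restriction: to show that every $\Gamma\in\Fib_{\stX,\shC}(A)$ extends uniquely to a colimit-preserving strong monoidal functor $\tilde{\Gamma}\colon\QCoh\stX\to\QCoh A$. Because $\shC$ generates, each $\shF\in\QCoh\stX$ admits a presentation $\bigoplus_{j}C_{j}\arr\bigoplus_{i}C_{i}\arr\shF\arr0$ with $C_{i},C_{j}\in\shC$, and one sets $\tilde{\Gamma}(\shF)=\Coker(\bigoplus_{j}\Gamma C_{j}\to\bigoplus_{i}\Gamma C_{i})$; right exactness of $\Gamma$ is exactly what makes this independent of the presentation and functorial. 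The delicate points are that $\tilde{\Gamma}$ is strong symmetric monoidal, which uses that $\shC$ is a monoidal subcategory closed under duals so that tensor products and duals of presentations compute $\tilde{\Gamma}$ on $\shF\otimes\shF'$ and on internal homs, and that restriction and extension are mutually inverse, the latter because colimit-preserving functors out of $\QCoh\stX$ are determined by their values on the generating subcategory $\shC$. Once the equivalence between $\Fib_{\stX,\shC}(A)$ and colimit-preserving strong monoidal functors $\QCoh\stX\to\QCoh A$ is established, composing with geometric Tannaka duality gives $\stX(A)\simeq\Fib_{\stX,\shC}(A)$, and a direct check that the resulting equivalence sends $s\colon\Spec A\to\stX$ to $s_{|\shC}^{*}$ identifies it with $\shP_{\shC}$.
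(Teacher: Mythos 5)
Your reduction of the second assertion to the statement that $\shP_{\shC}$ is an equivalence is exactly what the paper does in \ref{cor:Lurie s point of view}, and your observation that a cocontinuous strong monoidal functor $\QCoh(\stX)\to\QCoh(A)$ restricts to an object of $\Fib_{\stX,\shC}(A)$ (dualizable objects of $\QCoh(A)$ are the finitely generated projectives) is fine. The genuine gap is the input you describe as ``geometric (Lurie-style) Tannaka duality, available precisely because $\stX$ is a quasi-compact fpqc stack with quasi-affine diagonal.'' No such result exists at that level of generality: Lurie's theorem and its refinements (Hall--Rydh, Bhatt, Brandenburg, Sch\"appi) all require $\stX$ to be \emph{algebraic} (geometric, or at least to admit a suitable atlas), whereas the stacks covered by Theorem \ref{thma} need not be. For instance the pseudo-affine sheaf $U$ of \ref{exa:counterexample pseudo-affine} is a quasi-compact fpqc sheaf with quasi-affine diagonal whose quasi-coherent sheaves are generated by $\{\odi U\}$, so Theorem \ref{thma} applies to it with $\shC=\{\odi U\}$, yet $U$ is not algebraic (an algebraic pseudo-affine stack is quasi-affine by \ref{cor:pseudo-affine algebraic is quasi-affine}). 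For such $\stX$ the $\QCoh$-level duality you want to quote is not in the literature; it is essentially equivalent to the theorem being proven, so as written your argument is circular.

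Even setting the circularity aside, your extension step carries more content than the sketch suggests: $\Gamma$ is only assumed right exact on \emph{finite} test sequences (terms in $\shC^{\oplus}$), while your presentations involve arbitrary direct sums, and well-definedness, functoriality and strong monoidality of the Kan extension are precisely where the work lies. The paper's proof is structured to avoid needing any pre-existing $\QCoh$-duality: it dualizes fiber functors into contravariant left exact monoidal functors (\ref{lem:fib and smex then same stack}), applies the sheafification equivalence of \ref{thm:main thm main paper} to produce a quasi-coherent algebra $\alA_{\Gamma,\shC}$ on $\stX_{A}$, and then shows that $\Spec\alA_{\Gamma,\shC}\to\Spec A$ is an isomorphism by combining \ref{lem:strong monoidal implies surjective} (strong monoidality plus duals force every $f^{*}\E$ to be globally generated) with the characterization of pseudo-affine stacks in \ref{lem:recognizing affine rings from global sections}. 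That characterization is the engine of the proof. If you wanted to salvage your route you would first have to prove the cocontinuous-monoidal-functor duality for this class of stacks, which would require, among other things, establishing affineness of the diagonal and an affine fpqc atlas (the content of \ref{cor:quasi-affine and resolution implies affine}) --- a substantial part of what the paper actually proves.
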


In the case $\shC=\Loc\stX$, the functor $\shP_{\Loc(\stX)}$ has
already been proved to be an equivalence in the neutral case, that
is $\stX=\Bi_{R}G$, where $G$ is a flat and affine group scheme
over $R$ (see \cite[Theorem 1.2]{Broshi2013}, where $R$ is a Dedekind
domain, and \cite[Theorem 1.3.2]{Schappi2011} for general rings $R$),
for particular quotient stacks over a field (see \cite{Savin2006}
and \ref{rem: cases for resolution property}) and for quasi-compact
and quasi-separated schemes (see \cite[Proposition 1.8]{Brandenburg2012}).
More generally, although not explicitly stated elsewhere, for stacks
with the resolution property and with affine diagonal (which is automatic
in the algebraic case, see \cite{Totaro2002}) the fact that $\shP_{\Loc(\stX)}$
is an equivalence is equivalent to the known analogous results where
$\Loc$ is replaced by $\QCoh(-)$ (for those results and other variants
with $\Coh(-)$ or the derived category $D(-)$ see \cite{Lurie2004,Schappi2012,Brandenburg2012,Brandenburg2014,Bhatt2014,Hall2014}):
one can pass from quasi-coherent sheaves to locally free sheaves via
dualizable objects and, for the converse, extend functors from $\Loc(-)$
to $\QCoh(-)$ following the proof of \cite[Corollary 3.2]{Bhatt2014}.
We complete this picture by showing that in general the resolution
property implies the affineness of the diagonal (see \ref{cor:quasi-affine and resolution implies affine}).

The proof we present here does not follow this strategy and one of
the main ingredients is the classification of quasi-compact stacks
whose quasi-coherent sheaves are generated by global sections, called
pseudo-affine. We show that a quasi-compact stack $\stX$ is pseudo-affine
if and only if it is a sheaf with a flat monomorphism $\stX\to\Spec B$
for some $B$ (one can take $B=\Hl^{0}(\odi{\stX})$, see \ref{lem:recognizing affine rings from global sections}).
Moreover we show that a pseudo-affine algebraic stack is quasi-affine
(see \ref{cor:pseudo-affine algebraic is quasi-affine}), which has
already been observed in \cite[Proposition 3.1]{Gross2013}, and that
a quasi-compact flat monomorphism of algebraic stacks is quasi-affine
(see \ref{thm:flat mono are quasi-affine}), which was proved in \cite{Raynaud1967}.

The characterization of pseudo-affine sheaves and Theorem \ref{thma}
are a consequence of the theory developed in \cite{Tonini2014}, where
a correspondence between linear functors $\shC\to\Mod(A)$ and quasi-coherent
sheaves on $\stX\times A$ is discussed. We summarize in Section \ref{sec:Preliminaries}
the results used.

As explained above pseudo-affine sheaves are used in the proof of
Theorem \ref{thma}. On the other hand, another consequence of Theorem
\ref{thma} is a different characterization of pseudo-affine sheaves:
we show that a sheaf $U$ is pseudo-affine if and only if it is the
(sheaf) intersection of quasi-compact open subsets of an affine scheme
$\Spec B$ (one can take $B=\Hl^{0}(\odi U)$, see \ref{prop:arbitrary intersection of quasi-compact open is pseudo-affine}).

A consequence of this new characterization is a partial answer to
our second initial question, the nature of $\Fib_{\stX,\shC}$, and
a partial converse to Theorem \ref{thma}.

\begin{thmb} [\ref{thm:when FibX,C has the resolution property}]
\label{thmc} Let $\stX$ be a quasi-compact fibered category over
$R$ and $\shC\subseteq\Loc\stX$ be a full monoidal subcategory with
duals. If $R$ is not a $\Q$-algebra assume moreover that $\Sym^{n}\E\in\shC$
for all $\E\in\shC$ and $n\in\N$. Then $\Fib_{\stX,\shC}$ is a
quasi-compact fpqc stack with affine diagonal and the subcategory
$\{\shG_{\E}\}_{\E\in\shC}\subseteq\Loc(\Fib_{\stX,\shC})$ generates
$\QCoh(\Fib_{\stX,\shC})$. In particular $\Fib_{\stX,\shC}$ has
the resolution property.

\end{thmb}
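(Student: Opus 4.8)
The plan is to build a single rigidified atlas for $\Fib_{\stX,\shC}$, to recognise it as a pseudo-affine sheaf via the characterisation ``intersection of quasi-compact opens of an affine scheme'' mentioned in the introduction, and then to transport quasi-compactness and the generation of quasi-coherent sheaves down to $\Fib_{\stX,\shC}$. That $\Fib_{\stX,\shC}$ is an fpqc stack is immediate, since being $R$-linear, strong monoidal and right exact are fpqc-local conditions and locally free sheaves satisfy descent. For the diagonal, given $\Spec A$ and two fiber functors $\Gamma_1,\Gamma_2\colon\shC\to\Loc(A)$, the fibre of the diagonal is the sheaf $\Isosh_\otimes(\Gamma_1,\Gamma_2)$ of monoidal natural isomorphisms. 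I would realise it inside the product $\prod_{\E\in\shC}\Homsh_A(\Gamma_1\E,\Gamma_2\E)$ of the affine Hom-schemes of the finitely presented locally free bundles $\Gamma_i\E$: naturality in $\shC$ and compatibility with the monoidal structures are closed conditions, while invertibility is automatic because $\shC$ has duals and $\Gamma_1,\Gamma_2$ are strong monoidal, so every monoidal natural transformation between them is an isomorphism. A small-indexed limit of affine schemes over $\Spec A$ is affine, so $\Isosh_\otimes(\Gamma_1,\Gamma_2)$ is affine over $\Spec A$ and the diagonal of $\Fib_{\stX,\shC}$ is affine.

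For quasi-compactness I would rigidify. After choosing, on each connected component, bases of the would-be bundles, a fiber functor becomes the datum of a matrix $M_f$ for each morphism $f$ of $\shC$ and an invertible comparison matrix $\phi_{\E,\E'}$ for the monoidal structure, subject to the functoriality, coherence and duality identities. These identities are polynomial and cut out an affine scheme $\Spec S$ inside a product of affine spaces, while the requirements that the $\phi_{\E,\E'}$ and the duality pairings be invertible amount to the non-vanishing of certain determinants, each carving out a quasi-compact open of $\Spec S$. The resulting atlas
\[
U=\bigcap_{i}D(g_i)\subseteq\Spec S,
\]
with the $g_i$ ranging over these determinants, is thus a sheaf intersection of quasi-compact open subsets of an affine scheme, hence pseudo-affine by the new characterisation; in particular it is quasi-compact. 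The forgetful map $a\colon U\to\Fib_{\stX,\shC}$ is affine, as its fibre over $\Gamma$ is the product over $\E$ of the frame $\GL$-torsors of $\Gamma\E$, and it is faithfully flat since such torsors are fpqc-locally split and an arbitrary tensor product of faithfully flat algebras remains faithfully flat. As the diagonal is affine, $a$ is representable, so it exhibits $\Fib_{\stX,\shC}$ as a quasi-compact fpqc stack.

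Finally, for the generation statement I would push everything down along the affine map $a$. On $U$ the pullback $a^*\shG_\E$ is the rigidified free bundle, and, because $U$ is pseudo-affine, for every $\shF\in\QCoh(\Fib_{\stX,\shC})$ the pullback $a^*\shF$ is generated by global sections. The key point is that the coordinate functions on $U$ are matrix coefficients of the universal functor: an entry of $M_f$ is a global section of $\shG_{\E'}\otimes\shG_\E^\vee=\shG_{\E'\otimes\E^\vee}\in\shG(\shC)$, using that $\shC$ is monoidal with duals, and the inverse-determinant functions are accounted for by top exterior and symmetric powers. This is exactly where the hypothesis $\Sym^n\E\in\shC$ is needed—automatic when $R$ is a $\Q$-algebra, since then $\Sym^n\E$ is a direct summand of $\E^{\otimes n}$—as it guarantees that $a_*\odi U$ is a filtered colimit of quotients of direct sums of objects of $\shG(\shC)$. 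Combining this with the exactness of $a_*$ and descent along the fpqc cover $a$, a surjection onto $a^*\shF$ from a free module lifts to a surjection $\bigoplus_i\shG_{\E_i}\to\shF$, so $\shG(\shC)$ generates $\QCoh(\Fib_{\stX,\shC})$ and the resolution property follows. I expect this last step to be the main obstacle: controlling $a_*\odi U$ by $\shG(\shC)$ and descending generation through $a$ cleanly is the technical heart where the $\Sym^n$ hypothesis and the pseudo-affine structure of $U$ must be combined.
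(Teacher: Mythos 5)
Your general strategy---rigidify, recognise the atlas as an intersection of quasi-compact opens of an affine scheme, and transport generation---is indeed close to the paper's, and your argument for the affine diagonal (closed naturality and monoidality conditions inside a product of affine $\Homsh$-schemes, with invertibility automatic because $\shC$ has duals) is correct and in fact more direct than the paper's. But there is a genuine gap in the construction of the atlas: your $U=\bigcap_i D(g_i)$ imposes only functoriality, coherence, duality and invertibility of the monoidal data, i.e.\ it parametrizes rigidified \emph{strong monoidal} functors, and never imposes the \emph{right exactness} that is part of the definition of a fiber functor. Consequently the ``forgetful map'' $a\colon U\to\Fib_{\stX,\shC}$ does not exist: a point of your $U$ need not be an object of $\Fib_{\stX,\shC}$ at all, and everything downstream (affineness and faithful flatness of $a$, descent of quasi-compactness and of generation) collapses. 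The missing step is precisely the heart of the paper's proof: inside the affine scheme of rigidified strong monoidal functors (the paper's $\Omega$ of \ref{lem:the space of non exact functors}, which the paper even gets to be affine by encoding inverses as extra coordinates rather than as determinant opens) one must further intersect with the loci $\Omega_{\shT}$ where each finite test sequence $\shT$ of $\shC$ becomes exact; each such locus is a quasi-compact open by \ref{lem:locus of exactness}, and it is this (generally infinite) family of exactness loci---not your determinant opens---that makes the characterisation \ref{prop:arbitrary intersection of quasi-compact open is pseudo-affine} both necessary and applicable (\ref{lem:general Fib pseudo-affine}).

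A second, less visible gap is the rank bookkeeping. To write down finite-size matrix spaces at all, and to have any hope of quasi-compactness, you must know that for each $\E$ only finitely many ranks of $\Gamma_\E$ occur as $\Gamma$ runs over fiber functors. The paper proves $\text{ranks}(\shG_\E)=\text{ranks}(\E)$, finite since $\stX$ is quasi-compact, and this is not formal: it goes through the non-emptiness criterion $\Fib_{\stX,\shC}^f\neq\emptyset\iff f\in\shI$ (\ref{prop:charcaterization of shI}), proved by composing a fiber functor with $\omega_J^*$ for maps $\omega_J\colon\stX\to\shB_J$ to classifying stacks of products of $\GL$'s and invoking Theorem \ref{thm: Tannaka reconstrion for Loc X} there; that invocation rests on the generation Lemma \ref{lem:generators for GLn}, and this---not only your final pushforward step---is where the hypothesis $\Sym^n\E\in\shC$ (or $R$ a $\Q$-algebra) enters. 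Finally, for the generation statement you yourself flag the argument as incomplete; the paper circumvents exactly the difficulty you name by not pushing $\odi U$ forward from the atlas at all: it shows $\omega\colon\Fib_{\stX,\shC}\to\Bi G$ is pseudo-affine and pulls back the explicit generators of $\QCoh(\Bi G)$ supplied by \ref{lem:generators for GLn}, using $\shG_{\E^\vee}\simeq\shG_\E^\vee$ and $\shG_{\Sym^n\E}\simeq\Sym^n\shG_\E$ (\ref{lem:shG and operations}) to see that these pullbacks lie in $\{\shG_\E\}_{\E\in\shC}$, so that generation descends by \ref{rem: generating subcategory under pseudo-affine maps}.
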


We also consider the substack $\Fib_{\stX,\shC}^{f}$ of $\Fib_{\stX,\shC}$
of functors $\Gamma\colon\shC\to\Vect(A)$ such that $f(\E)=\rk\Gamma_{\E}$
for all $\E\in\shC$, where $f$ is a rank function $f\colon\shC\to\N$.
In Theorem \ref{thm:when FibX,C has the resolution property} we prove
that $\Fib_{\stX,\shC}^{f}$ is non empty if and only if there exists
a geometric point $\xi\colon\Spec\Omega\to\stX$ such that $f(\E)=\rk\xi^{*}\E$
for all $\E\in\shC$ and that, in this case, $\Fib_{\stX,\shC}^{f}$
is also a quasi-compact stack with affine diagonal and the resolution
property.

In Theorem \ref{thm:when FibX,C has the resolution property} is also
discussed a criterion to deduce that $\Fib_{\stX,\shC}$ is an algebraic
stack, but we still think this is very unsatisfactory (see \ref{rem:bad algebraic fib}).

We expect that $\shG\colon\shC\to\Vect(\Fib_{\stX,\shC})$ is fully
faithful in general, which would imply that $\stX\to\Fib_{\stX,\Vect(\stX)}$
is universal among maps from $\stX$ to quasi-compact fpqc stacks
with quasi-affine diagonal and the resolution property, but we are
unable to prove it (see \ref{prop:Conjecture implications}).

We conclude the paper by looking at the ``baby case'' $\shC=\{\odi{\stX}\}$.
In this case $\Fib_{\stX,\{\odi{\stX}\}}$ is the pseudo-affine sheaf
intersection of all quasi-compact open subsets of $\Spec\Hl^{0}(\odi{\stX})$
containing the image of $\stX\to\Spec\Hl^{0}(\odi{\stX})$ (see \ref{prop:the simplest category}).
In \ref{exa:counterexample} we show that $\Fib_{\stX,\{\odi{\stX}\}}$
is not algebraic in general, even if $\stX$ is an integral scheme
of finite type over a field.

The outline of the paper is the following. In the first section we
describe the theory of sheafification functors from \cite{Tonini2014}
and some results about frame bundles. In the second section we give
a first description of pseudo-affine stacks and, in the third section,
prove Theorem \ref{thma}. In the fourth section we give a different
description of pseudo-affine stacks, which is then used in the last
section to prove Theorem \ref{thmc}.

\section*{Notation}

In this paper we work over a base commutative, associative ring $R$
with unity. If not stated otherwise a fiber category will be a category
fibered in groupoids over $\Aff/R$, the category of affine schemes
over $\Spec R$, or, equivalently, the opposite of the category of
$R$-algebras. An fpqc stack will be a stack for the fpqc topology.

A map $f\colon\stX'\arr\stX$ of fibered categories is called representable
if for all maps $T\arr\stX$ from an affine scheme (or an algebraic
space) the fiber product $T\times_{\stX}\stX'$ is (equivalent to)
an algebraic space.

Given a flat and affine group scheme $G$ over $R$ we denote by $\Bi_{R}G$
(or simply $\Bi G$ when the base ring is clear) the stack of $G$-torsors
for the \emph{fpqc} topology, which is an fpqc stack with affine diagonal.
When $G\arr\Spec R$ is finitely presented (resp. smooth) then $\Bi_{R}G$
coincides with the stack of $G$-torsors for the fppf (resp. étale)
topology.

By a ``subcategory'' of a given category we mean a ``full subcategory''
if not stated otherwise.

We assume the notations, definition and results of \cite[Section 1]{Tonini2014}.
In particular: the notion of pseudo-algebraic and quasi-compact fibered
categories or maps between them; flat maps of fibered categories;
quasi-coherent sheaves and their functoriality; quasi-coherent sheaves
on a fibered category. In particular $\QCoh(\stX)$ and $\QAlg(\stX)$
denotes the category of quasi-coherent sheaves and quasi-coherent
algebras on $\stX$ respectively.

If $\stY_{i}$ for $i\in I$ is a set of fibered categories over $R$
then $\sqcup_{i}\stY_{i}$ is defined as the fibered category over
$R$ whose objects over an $R$-algebra $A$ are tuples consisting
of a decomposition $\Spec A=\sqcup_{i}U_{i}$ into open and closed
subsets and $y_{i}\in\stY_{i}(U_{i})$. In particular if $\stX$ is
a fibered category with maps $\stY_{i}\to\stX$ then a map
\[
\bigsqcup_{i\in I}\stY_{i}\to\stX
\]
is well defined provided that $\stX$ is a Zariski stack.

If $\E$ is a vector bundle over a category fibered in groupoids $\stX$
then the locus $\stX_{n}\to\stX$ where $\E$ has rank $n$ is an
open and closed immersion. Moreover there is a map $\stX\to\sqcup_{n}\stX_{n}$
(which is an equivalence if $\stX$ is a Zariski stack) and if $\stX$
is quasi-compact, then all $\stX_{n}$ are empty but finitely many.

If $\E$ is a vector bundle on a fibered category $\stX$ we can define
$\det\E$ as 
\[
\det\E=(\bigoplus_{n\in\N}\Lambda^{n}(\E_{|\stX_{n}}))_{|\stX}
\]
which is compatible with the usual notion of determinant for vector
bundles of fixed rank.

\section*{Acknowledgments}

I would like to thank Jarod Alper, Daniel Schäppi, Mattia Talpo and
Angelo Vistoli for the useful conversations I had with them and all
the suggestions they gave me. Moreover I would like to thank David
Rydh for sharing many useful references and explaining me how the
non-neutral form of Tannaka's reconstruction proved in this paper
fits in the vast literature about Tannaka's duality.

\section{\label{sec:Preliminaries}Preliminaries}

We recall some basic definitions and set up some notations. We fix
a base ring $R$.

\subsection{Monoidal functors and their sheafification}
\begin{defn}
\label{def: pseudo monoidal commutativi associative} Let $\catC$
and $\catD$ be $R$-linear symmetric monoidal categories. A (contravariant)
\emph{pseudo-monoidal }functor $\Omega\colon\catC\to\catD$ is an
$R$-linear (and contravariant) functor together with a natural transformation
\[
\iota_{V,W}^{\Omega}\colon\Omega_{V}\otimes\Omega_{W}\to\Omega_{V\otimes W}\text{ for }V,W\in\catC
\]
A (contravariant) pseudo-monoidal functor $\Omega\colon\catC\to\catD$
is

\begin{enumerate}
\item \emph{\label{enu:commutative for functors} symmetric} or commutative
if for all $V,W\in\catC$ the following diagram is commutative$$
\begin{tikzpicture}[xscale=2.7,yscale=-1.2]     \node (A0_1) at (1, 0) {$\Omega_V\otimes \Omega_W$};     \node (A0_2) at (2, 0) {$\Omega_{V\otimes W}$};     \node (A1_1) at (1, 1) {$\Omega_W\otimes \Omega_V$};     \node (A1_2) at (2, 1) {$\Omega_{W\otimes V}$};     \path (A0_1) edge [->]node [auto] {$\scriptstyle{\iota_{V,W}^\Omega}$} (A0_2);     \path (A0_2) edge [->]node [auto] {$\scriptstyle{}$} (A1_2);     \path (A1_1) edge [->]node [auto] {$\scriptstyle{\iota_{W,V}^\Omega}$} (A1_2);     \path (A0_1) edge [->]node [auto] {$\scriptstyle{}$} (A1_1);   \end{tikzpicture}
$$where the vertical arrows are the obvious isomorphisms;
\item \emph{\label{enu:associative for functors} associative} if for all
$V,W,Z\in\catC$ the following diagram is commutative $$
 \begin{tikzpicture}[xscale=4,yscale=-1.2]     \node (A0_0) at (0, 0) {$\Omega_V\otimes \Omega_W\otimes \Omega_Z$};     \node (A0_1) at (1, 0) {$\Omega_{V\otimes W}\otimes \Omega_Z$};     \node (A1_0) at (0, 1) {$\Omega_V\otimes \Omega_{W\otimes Z}$};     \node (A1_1) at (1, 1) {$\Omega_{V\otimes W\otimes Z}$};     \path (A0_0) edge [->]node [auto] {$\scriptstyle{\iota_{V,W}^\Omega \otimes \id}$} (A0_1);     \path (A1_0) edge [->]node [auto] {$\scriptstyle{\iota_{V,W\otimes Z}^\Omega}$} (A1_1);     \path (A0_0) edge [->]node [auto] {$\scriptstyle{\id\otimes \iota_{W,Z}^\Omega}$} (A1_0);     \path (A0_1) edge [->]node [auto] {$\scriptstyle{\iota_{V\otimes W,Z}^\Omega}$} (A1_1);   \end{tikzpicture}
$$
\end{enumerate}
If $I$ and $J$ are the unit objects of $\catC$ and $\catD$ respectively,
a unity for $\Omega$ is a morphism $1\colon J\to\Omega_{I}$ such
that, for all $V\in\catC$, the compositions
\[
\Omega_{V}\otimes J\to\Omega_{V}\otimes\Omega_{I}\to\Omega_{V\otimes I}\to\Omega_{V}\text{ and }J\otimes\Omega_{V}\to\Omega_{I}\otimes\Omega_{V}\to\Omega_{I\otimes V}\to\Omega_{V}
\]
coincide with the natural isomorphisms $\Omega_{V}\otimes J\to\Omega_{V}$
and $J\otimes\Omega_{V}\to\Omega_{V}$ respectively.

A (contravariant)\emph{ monoidal} functor $\Omega\colon\catC\to\catD$
is a symmetric and associative pseudo-monoidal (contravariant) functor
with a unity $1$. A (contravariant) \emph{strong} monoidal functor
$\Omega\colon\catC\to\catD$ is a (contravariant) monoidal functors
such that all the maps $\iota_{V,W}^{\Omega}$and $1\colon J\to\Omega_{I}$
are isomorphisms.
\end{defn}

A morphism of pseudo-monoidal functors $(\Omega,\iota^{\Omega})\to(\Gamma,\iota^{\Gamma})$,
called a monoidal morphism or transformation, is a natural transformation
$\Omega\to\Gamma$ which commutes with the monoidal structures $\iota^{*}$.
A morphism of monoidal functors is a monoidal transformation preserving
the unities.
\begin{defn}
Let $\stX$ be a fibered category over $R$ and $\shC$ a full subcategory
of $\QCoh(\stX)$. We say that $\shC$ generates $\QCoh(\stX)$ if
all quasi-coherent sheaves on $\stX$ are a quotient of a direct sum
of sheaves in $\shC$. We say that $\stX$ has the \emph{resolution
property} if $\Loc(\stX)$ generates $\QCoh(\stX)$.
\end{defn}

We will consider only fpqc stacks with quasi-affine diagonal, for
instance algebraic stacks with quasi-affine diagonal (see \cite[Corollary 10.7]{Laumon1999})
and quasi-separated schemes. This is because resolution property is
somehow meaningless for other stacks, see for instance Remark $(1)$
in the introduction of \cite{Totaro2002}.
\begin{defn}
\label{def:basic categories} \cite[Def 2.1]{Tonini2014} Let $\stX$
be a fibered category over $R$, $A$ an $R$-algebra and $\shD$
a subcategory $\shD\subseteq\QCoh(\stX)$.

We define $\L_{R}(\shD,A)$ as the category of contravariant $R$-linear
functors $\Gamma\colon\shD\to\Mod A$ and natural transformations
as arrows. If $\shD$ is a monoidal subcategory of $\QCoh\stX$, that
is a subcategory such that $\odi{\stX}\in\shD$ and for all $\E,\E'\in\shD$
we have $\E\otimes\E'\in\shD$, we also define the category $\ML_{R}(\shD,A)$
whose objects are $\Gamma\in\L_{R}(\shD,A)$ with a monoidal structure.

We denote by $\shD^{\oplus}$ the full subcategory of $\QCoh(\stX)$
containing all the finite direct sums of elements in $\shD$. Notice
that a (contravariant) $R$-linear functor from $\shD$ to an $R$-linear
and additive category extends uniquely to $\shD^{\oplus}$ (see \cite[Prop 2.16]{Tonini2014}).
We will denote this extension by the same symbol. In other words we
will evaluate a linear functor with source $\shD$ also on objects
and maps of $\shD^{\oplus}$.
\end{defn}

\begin{defn}
\label{def:exactness} A \emph{finite test sequence }for $\shD$ is
an exact sequence in $\QCoh(\stX)$ of the form
\[
\E'\to\E\to0\text{ or }\E''\to\E'\to\E\to0\text{ with }\E'',\E',\E\in\shD^{\oplus}
\]
If $\shD\subseteq\Loc(\stX)$, $\stX$ is quasi-compact and $\stY$
is another fiber category over $R$ we say that a (contravariant)
$R$-linear functor $\Gamma\colon\shD\to\QCoh(\stY)$ is right (resp.
left) exact if it is exact on all test sequences in $\shD$. We define
$\Lex_{R}(\shD,A)$ (resp. $\MLex_{R}(\shD,A)$) as the subcategory
of $\L_{R}(\shD,A)$ (resp. $\ML_{R}(\shD,A)$) of left exact functors.

{} We define $\stX_{A}=\stX\times_{R}A$ and
\[
\Omega^{*}\colon\QCoh\stX_{A}\to\L_{R}(\shD,A)\comma\Omega_{\shH}^{\shG}=\Hom_{\stX}(\pi^{*}\shH,\shG)
\]
where $\pi\colon\stX_{A}\to\stX$ is the projection.
\end{defn}

\begin{rem}
The definition of test sequence and therefore of the categories $\Lex_{R}(\shD,A)$
and $\MLex_{R}(\shD,A)$ it is not completely equivalent to the one
in \cite[Def 4.3]{Tonini2014}. As discussed in the proof of the next
Theorem the two notions agree when $\shD$ generates $\QCoh(\stX)$:
functors left exact in the sense of \cite{Tonini2014} are automatically
left exact in our sense. In \cite{Tonini2014} we were looking for
a minimal collection of test sequences which make results like the
one below true under the assumption that $\shD$ generates $\QCoh(\stX)$.
In this paper instead we will deal with more general subcategories
$\shD$ and therefore this notion of test sequence seems more precise.
\end{rem}

\begin{thm}
\label{thm:main thm main paper} Let $\stX$ be a quasi-compact fibered
category over $R$ and $\shD\subseteq\Loc(\stX)$ be a full subcategory
that generates $\QCoh(\stX)$. Then $\Omega^{*}\colon\QCoh\stX_{A}\to\L_{R}(\shD,A)$
is fully faithful with essential image $\Lex_{R}(\shD,A)$ and it
has an exact left adjoint $\shF_{*,\shD}\colon\L_{R}(\shD,A)\to\QCoh\stX_{A}$.
In particular $\Omega_{*}$ and $(\shF_{*,\shD})_{|\Lex_{R}(\shD,A)}$
are quasi-inverses of each other.

Assume that $\shD$ is a monoidal subcategory of $\Loc(\stX)$. Then
the functor $\Omega^{*}$ and $\shF_{*,\shD}$ extends to adjoint
functors $\Omega^{*}\colon\QAlg\stX_{A}\to\ML_{R}(\shD,A)$ and $\alA_{*,\shC}\colon\ML_{R}(\shD,A)\to\QAlg\stX_{A}$.
Moreover $\Omega^{*}\colon\QAlg(\stX_{A})\to\MLex_{R}(\shD,A)$ is
an equivalence and $(\alA_{*,\shC})_{|\MLex_{R}(\shD,A)}$ is a quasi-inverse.
\end{thm}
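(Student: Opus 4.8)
The plan is to \emph{deduce} the statement from the results of \cite{Tonini2014}, since almost everything asserted here is established there; the only genuinely new point, as anticipated in the remark preceding the statement, is that the notion of left exactness of Definition \ref{def:exactness} agrees with the one of \cite[Def 4.3]{Tonini2014} under the running hypothesis that $\shD$ generates $\QCoh(\stX)$.

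First I would record what \cite{Tonini2014} supplies. The functor $\Omega^*$ admits a left adjoint $\shF_{*,\shD}$, this adjoint is exact, the counit $\shF_{*,\shD}\circ\Omega^*\to\id$ is an isomorphism (so that $\Omega^*$ is fully faithful), and the essential image of $\Omega^*$ is precisely the full subcategory of functors left exact in the sense of \cite{Tonini2014}; the same holds, mutatis mutandis, for the monoidal enhancement $\Omega^*\colon\QAlg\stX_A\to\ML_R(\shD,A)$ and its left adjoint $\alA_{*,\shD}$. The technical heart of these assertions --- the construction of $\shF_{*,\shD}$ by a colimit expression, full faithfulness from the fact that the $\pi^*\E$ with $\E\in\shD$ generate $\QCoh\stX_A$, and exactness of the adjoint --- I would take wholesale from \cite{Tonini2014}. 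Granting this, the problem reduces to identifying the two essential images with $\Lex_R(\shD,A)$ and $\MLex_R(\shD,A)$ as defined here.

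That identification I would carry out by two inclusions. For the inclusion of the essential image into $\Lex_R(\shD,A)$, I would check directly that every $\Omega^\shG$ is left exact in our sense: $\Omega^\shG$ is the composite of the pullback $\pi^*$, which is right exact being a left adjoint, with the contravariant left exact functor $\Hom_\stX(-,\shG)$; hence it sends any finite test sequence $\E''\to\E'\to\E\to0$ (a right exact sequence with terms in $\shD^\oplus$) to a left exact sequence $0\to\Omega^\shG_\E\to\Omega^\shG_{\E'}\to\Omega^\shG_{\E''}$. For the reverse inclusion I would observe that the test sequences singled out in \cite[Def 4.3]{Tonini2014} are, once $\shD$ generates $\QCoh(\stX)$, particular instances of the finite test sequences of Definition \ref{def:exactness}; therefore a functor exact on all of our test sequences is a fortiori exact on theirs, so is left exact in the sense of \cite{Tonini2014} and thus lies in the essential image of $\Omega^*$. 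The two inclusions give equality, and (as noted in the remark) the converse direction, that their left exactness implies ours, then drops out formally, being just the forward inclusion applied after the main theorem. The very same argument, carried with monoidal structures and algebra objects throughout, yields that the essential image of $\Omega^*$ on $\QAlg\stX_A$ equals $\MLex_R(\shD,A)$.

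Finally, the quasi-inverse statements are pure adjunction formalism: $\Omega^*$ is a fully faithful right adjoint, so its counit is invertible and the restriction of $\shF_{*,\shD}$ to the essential image is a quasi-inverse; combined with the identifications above this gives that $(\shF_{*,\shD})_{|\Lex_R(\shD,A)}$ and $\Omega^*$ are quasi-inverse, and likewise $(\alA_{*,\shD})_{|\MLex_R(\shD,A)}$ and $\Omega^*$ on the algebra side. The step I expect to be the main obstacle is the reverse inclusion, namely verifying that the minimal family of test sequences of \cite{Tonini2014} really sits inside the family of Definition \ref{def:exactness}; this is exactly where the hypothesis that $\shD$ generates $\QCoh(\stX)$ is needed, since it guarantees the existence of enough presentations by objects of $\shD^\oplus$ to make the two collections comparable.
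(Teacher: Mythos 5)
Your proposal is correct and follows essentially the same route as the paper's own proof: both deduce the full faithfulness, the exact left adjoint, and the identification of the essential image from the cited results of \cite{Tonini2014}, and then reconcile the two notions of left exactness by the two inclusions you describe (the minimal test sequences of \cite{Tonini2014} sit inside the present, larger family, while functors of the form $\Omega^{\shG}$ are automatically exact on all of the present test sequences). The only difference is that where the paper simply cites \cite[Prop 4.5]{Tonini2014} for this last point, you verify it directly by factoring $\Omega^{\shG}$ as the right exact pullback $\pi^{*}$ followed by the contravariant left exact functor $\Hom_{\stX_{A}}(-,\shG)$, which is a sound argument and is essentially the content of that citation.
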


\begin{proof}
By \cite[Theorem A]{Tonini2014}, \cite[Theorem B]{Tonini2014} and
\cite[Theorem 6.8]{Tonini2014} the same result works for the notion
of test sequence given in \cite[Def 2.1]{Tonini2014}. On the other
hand by \cite[Prop 4.5]{Tonini2014} functors in the image of $\Omega^{*}\colon\QCoh\stX_{A}\to\L_{R}(\shD,A)$
are automatically left exact in our stronger sense.
\end{proof}
\begin{rem}
If $\stX$ is a quasi-compact fibered category over $R$ and $\shC\subseteq\Vect(\stX)$
is a full (monoidal) subcategory there always exists a full (monoidal)
subcategory $\tilde{\shC}\subseteq\shC$ such that $\widetilde{\shC}$
is small, that is the class $\text{ob}(\tilde{\shC})$ is a set, and
$\widetilde{\shC}\to\shC$ is an equivalence. In particular in this
case the restriction along $\widetilde{\shC}\to\shC$ induces an equivalence
for all the categories of linear (monoidal) functors we have considered.
Thus, when $\stX$ is a quasi-compact fibered category over $R$,
we will tacitly assume that the category $\shC$ is small.

To see how to find $\widetilde{\shC}$ we use the following argument.
The category $\shC$ is essentially small because $\stX$ is quasi-compact
(see \cite[Prop 1.8]{Tonini2014}), so we can start taking $\shR_{0}\subseteq\text{ob}(\shC)$
so that any sheaf in $\shC$ is isomorphic to a sheaf in $\shR_{0}$.
If we don't need duals or monoidal structure we can stop here and
consider as $\widetilde{\shC}$ the full subcategory with objects
in $\shR_{0}$.

If $\shC$ is a submonoidal category, this choice is not enough, since
we don't want to change the $\otimes$. Assume also $\odi{\stX}\in\shR_{0}$.
At this point we denote by $\shR$ the set of objects of $\shC$ that
can be written using finitely many times $\otimes$ and $-^{\vee}$,
starting from objects in $\shR_{0}$. Setting as $\tilde{\shC}$ the
full subcategory of $\shC$ with objects in $\shR$ makes the trick.
\end{rem}

\subsection{\label{subsec:Generalized-frame-bundles} Generalized frame bundles}

In this section we study frame bundles of general vector bundles.
We fix a category $\stX$ fibered in groupoids over $R$.
\begin{defn}
\label{def:profinite stuff} Given a subset $K\subseteq\N$ we set
\[
D(K)=\bigsqcup_{n\in K}\Spec R
\]
and denote by $Q(K)$ the vector bundle on $D(K)$ which is free of
rank $n$ over the copy of $\Spec R$ corresponding to $n\in K$.
More generally given a collection $K_{*}\colon I\to\shP(\N)$, where
$\shP(\N)$ is the set of subsets of $\N$, we set $D(K_{*})=\prod_{i}D(K_{i})$
as a sheaf over $R$.

Let $\E\in\Vect(\stX)$ be a locally free sheaf on $\stX$. We set
\[
\text{ranks}(\E)=\{n\in\N\st\exists s\colon\Spec L\to\stX\text{ with }\rk s^{*}\E=n\}\comma D(\E)=D(\text{ranks}(\E))\comma Q(\E)=Q(\text{ranks}(\E))
\]

There is a map $r_{\E}\colon\stX\to D(\E)$, the rank function, and
we set
\[
\Fr(\E)=\Isosh_{\stX}(\E,r_{\E}^{*}Q(\E))\to\stX
\]
If $I$ is a set and $\E_{*}\colon I\to\Vect(\stX)$ is a collection
of vector bundles we set $D(\E_{*})=D(\text{ranks}(\E_{*}))=\prod_{i}D(\E_{i})$
and $\Fr(\E_{*})=\prod_{i}\Fr(\E_{i})$, where the products are taken
over $R$ and $\stX$ respectively.
\end{defn}

\begin{rem}
\label{rem:mapping to D(E*)} The scheme $D(K)$, thought of as a
sheaf, is the constant sheaf associated with $K$. Instead $D(K_{*})$
as sheaf $\Aff/R\to\sets$ is
\[
D(K_{*})(U)=\{s\colon U\to\prod_{i\in I}K_{i}\text{ with locally constant factors }U\to K_{i}\}
\]
The map $\stX\to D(\E_{*})$ maps $\xi\colon U\to\stX$ to the map
$\prod_{i}\rk(\xi^{*}\E_{i})\colon U\to\prod_{i\in I}\text{ranks}(\E_{i})$.
\end{rem}

\begin{rem}
\label{rem:connected components vector affine} The scheme $D(K)$
is affine if and only if $K$ is finite. If $K_{*}\colon I\to\shP(\N)$
is a collection such that all the $K_{i}$ are finite, then $D(K_{*})\to\Spec R$
is also affine and faithfully flat. This is the case, for instance,
if $\stX$ is quasi-compact and $K_{*}=\text{ranks}(\E_{*})$. Otherwise
$D(K_{*})$ is just thought of as a sheaf over $R$.
\end{rem}

\begin{lem}
\label{lem:rank loci} Let $K_{*}\colon I\to\shP(\N)$ be a map and
$n_{i}\in K_{i}$ for all $i$. Then the map $\Spec R\to D(K_{*})$
given by the constant function $(n_{i})_{i}\colon\Spec R\to\prod_{i\in I}K_{i}$
is a closed immersion and representable by localizations of affine
schemes. In particular if $\E_{*}\colon I\to\Vect(\stX)$ is a collection
of vector bundles then the locus $\stX'\to\stX$ where the sheaves
$\E_{i}$ have rank $n_{i}$ enjoys the same property.
\end{lem}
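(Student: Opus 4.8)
The plan is to reduce the whole statement to a computation with idempotents carried out after base change to an affine scheme, and then to deduce the ``in particular'' clause by stability under base change. Since $D(K_*)$ is a sheaf over $R$, to show that the constant map $(n_i)_i\colon\Spec R\to D(K_*)$ is a closed immersion representable by localizations it suffices to take an arbitrary affine scheme $T=\Spec A$ together with a map $T\to D(K_*)$ and to verify that the projection $\Spec R\times_{D(K_*)}T\to T$ is simultaneously a closed immersion and a localization of $T$. First I would unwind such a $T$-point: by Remark \ref{rem:mapping to D(E*)} it is a tuple $(s_i)_{i\in I}$ of locally constant functions $s_i\colon\Spec A\to K_i$. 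Each $s_i$ gives a decomposition of $\Spec A$ into clopen pieces indexed by the finitely many values it attains (here quasi-compactness of $\Spec A$ is used), so the locus where $s_i=n_i$ is cut out by an idempotent $e_i\in A$, i.e. it equals $\Spec(A/(1-e_i))$. The fiber product $\Spec R\times_{D(K_*)}T$ is exactly the locus where $s_i=n_i$ for all $i$ at once, hence it is $\Spec(A/J)$ with $J=\sum_{i\in I}(1-e_i)$.

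Next I would establish the algebraic identity that reconciles the two properties we must prove: for idempotents $e_i$ one has $A/\sum_{i}(1-e_i)=A[e_i^{-1}\colon i\in I]$. Inverting $e_i$ forces $1-e_i=0$, since multiplying $e_i(1-e_i)=0$ by the inverse of $e_i$ gives $1-e_i=0$, so the localization kills $J$; conversely in $A/J$ every $e_i$ is already equal to $1$, hence invertible, so the localization map factors through $A/J$ and is an isomorphism onto it. Thus $\Spec R\times_{D(K_*)}T\to T$ is at once a closed immersion, via the quotient description $\Spec(A/J)$, and a localization of the affine scheme $T$, via the inverting-idempotents description. This proves the first assertion; the single-factor case $\Spec R\to D(K)$ selecting $n\in K$ is recovered as the special case where the fiber product is the clopen piece $\Spec(A[e_n^{-1}])$.

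Finally, for the ``in particular'' clause I would specialize to $K_*=\text{ranks}(\E_*)$ and identify the locus $\stX'\subseteq\stX$ where each $\E_i$ has rank $n_i$ with the base change of the point $(n_i)_i\colon\Spec R\to D(\E_*)$ along the rank map $\stX\to D(\E_*)$ of Remark \ref{rem:mapping to D(E*)}: a point $\xi\colon U\to\stX$ lies in this fiber product precisely when $\rk\xi^{*}\E_i=n_i$ for all $i$, which is the defining condition of $\stX'$. Since both being a closed immersion and being representable by localizations are stable under base change, $\stX'\to\stX$ inherits both properties from the first part.

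The hard part will be the infinite product: an infinite intersection of clopen subschemes is in general only closed, not open, so it is not clear a priori that the fiber product remains a localization. The identity $A/\sum_{i}(1-e_i)=A[e_i^{-1}\colon i\in I]$ is exactly what dissolves this difficulty and makes the two claims hold simultaneously.
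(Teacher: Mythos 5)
Your proof is correct and follows essentially the same route as the paper: reduce to an affine base change, identify the fiber product with the intersection of the clopen loci $U_i = s_i^{-1}(n_i)$, and show this intersection is simultaneously a closed immersion and a localization, concluding the ``in particular'' clause by stability under base change. Your explicit idempotent identity $A/\sum_i(1-e_i)\cong A[e_i^{-1}\colon i\in I]$ is precisely the step the paper dismisses as ``elementary to check,'' so you have in fact supplied the missing detail rather than deviated from the argument.
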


\begin{proof}
Let $\Spec B\to D(K_{*})$ be any map, given by locally constant functions
$h_{i}\colon\Spec B\to K_{i}$. Set $Z=\Spec B\times_{D(K_{*})}\Spec R$.
We have to show that $Z\to\Spec B$ is a closed immersion and a localization.
By definition of fiber product we see that $Z$ is the locus in $\Spec B$
where the $h_{i}$ are constant and equal to $n_{i}$. In other words
if $U_{i}=h_{i}^{-1}(n_{i})$ then $Z=\cap_{i}U_{i}$. Notice that
any intersections of the $U_{i}$ is an open and closed subscheme
of $\Spec B$ and, in particular, a localization. Since $Z$ can be
written as a filtered intersection of open and closed subsets of $\Spec B$,
it is elementary to check that $Z\to\Spec B$ is both a closed immersion
and a localization.
\end{proof}
\begin{prop}
\label{prop:generalized GLn} Let $K\subseteq\N$. The stack $\Bi_{D(K)}\GL(Q(K))\to D(K)$
is equivalent to the stack over $D(K)$ whose fiber over a $\Spec B\to D(K)$,
given by $h\colon\Spec B\to K$, is the groupoid of locally free sheaves
on $\Spec B$ that have rank $n$ on $h^{-1}(n)$ for all $n\in K$.

Let $K_{*}\colon I\to\shP(\N)$ and 
\[
G=\prod_{i}\GL(Q(K_{i}))=\prod_{i}(\GL(Q(K_{i}))\times_{D(K_{i})}D(K_{*}))\to D(K_{*})
\]
where the products are taken over $R$ and $D(K_{*})$ respectively.
Then $G\to D(K_{*})$ is an affine and faithfully flat relative group
scheme and the stack $\Bi_{D(K_{*})}G\to D(K_{*})$ is equivalent
to the stack over $D(K_{*})$ whose fiber over a $\Spec B\to D(K_{*})$,
given by $h_{i}\colon\Spec B\to K_{i}$, is the groupoid of $(\shG_{i})_{i}$
where $\shG_{i}$ is a locally free sheaf on $\Spec B$ that have
rank $n$ on $h^{-1}(n)$ for all $n\in K_{i}$.
\end{prop}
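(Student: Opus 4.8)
The plan is to deduce both assertions from the standard dictionary relating torsors under the automorphism group of a vector bundle to twisted forms of that bundle, after reducing to the components of $D(K)$ on which $Q(K)$ is free. The key tool is the following well-known equivalence: for a vector bundle $\shV$ on a scheme (or category fibered in groupoids) $S$, the group scheme $\GL(\shV)=\Autsh_{S}(\shV)\to S$ is affine and faithfully flat, and the assignment $\shW\longmapsto\Isosh_{S}(\shV,\shW)$ defines an equivalence from the stack over $S$ of vector bundles that are fpqc-locally isomorphic to $\shV$ to the stack $\Bi_{S}\GL(\shV)$, with quasi-inverse the associated-bundle construction $P\mapsto P\times^{\GL(\shV)}\shV$.

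For the first statement I would apply this with $S=D(K)=\bigsqcup_{n\in K}\Spec R$ and $\shV=Q(K)$. Since on the copy of $\Spec R$ indexed by $n$ the bundle $Q(K)$ is free of rank $n$, on that component $\GL(Q(K))=\GL_{n,R}$ and $\Isosh(Q(K),\shW)=\Fr(\shW)$ is the usual frame bundle; thus the equivalence reduces to the classical fact $\Bi_{\Spec R}\GL_{n}\simeq\{\text{rank-}n\text{ vector bundles}\}$, valid over any base. Given $\Spec B\to D(K)$ corresponding to a locally constant $h\colon\Spec B\to K$, the structure map of a torsor forces it to decompose along $\Spec B=\bigsqcup_{n}h^{-1}(n)$ into a $\GL_{n,B}$-torsor over each $h^{-1}(n)$, i.e. into a locally free sheaf of rank $n$ there. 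It only remains to observe that a vector bundle on $\Spec B$ is fpqc-locally isomorphic to $h^{*}Q(K)$ exactly when it has rank $n$ on $h^{-1}(n)$ for all $n$, since two vector bundles are fpqc-locally (indeed Zariski-locally, where both are free) isomorphic if and only if they have the same rank function. This gives precisely the claimed description of the fiber.

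For the second statement I would first check that $G=\prod_{i}\GL(Q(K_{i}))\to D(K_{*})$ is an affine and faithfully flat relative group scheme. The group structure is automatic from a product of group schemes. Affineness and faithful flatness are checked after base change along an arbitrary $\Spec B\to D(K_{*})$; there $G_{B}=\prod_{i}\bigl(\GL(Q(K_{i}))\times_{D(K_{i})}\Spec B\bigr)$ is a fiber product over $\Spec B$ of the affine faithfully flat group schemes produced in the first part. Such a (possibly infinite) fiber product is the relative $\Spec$ of the filtered colimit over finite subsets of $I$ of the tensor products of the corresponding $B$-algebras, hence affine; it is flat as a filtered colimit of flat modules, and faithfully flat because every geometric fiber is a tensor product over a field of nonzero algebras, hence nonzero.

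Finally I would identify $\Bi_{D(K_{*})}G$ using the fact that a torsor under a product group scheme is canonically a tuple of torsors under the factors: the projections $G\to\GL(Q(K_{i}))\times_{D(K_{i})}D(K_{*})$ and the associated-bundle construction give an equivalence $\Bi_{D(K_{*})}G\simeq\prod_{i}\Bi_{D(K_{*})}\bigl(\GL(Q(K_{i}))\times_{D(K_{i})}D(K_{*})\bigr)$, the product being taken over $D(K_{*})$. Applying the first part to each factor, pulled back along the projection $D(K_{*})\to D(K_{i})$, identifies the $i$-th factor with the stack of locally free sheaves $\shG_{i}$ of rank $n$ on $h_{i}^{-1}(n)$, so the product is the stack of tuples $(\shG_{i})_{i}$ as asserted. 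The main obstacle, and the point requiring the most care, is the bookkeeping in the infinite case $|I|=\infty$: one must ensure both that the infinite tensor product genuinely remains affine and faithfully flat and that the ``torsor under a product equals product of torsors'' equivalence holds without finiteness of $I$. Both reduce, as above, to statements over an affine base and to the behaviour of faithful flatness under filtered colimits, which is why I insist on testing everything after base change to $\Spec B$.
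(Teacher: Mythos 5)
Your proposal is correct and follows essentially the same route as the paper's (very terse) proof: reduce the first claim componentwise to the classical equivalence $\Bi\,\GL_{n}\simeq\{\text{rank-}n\text{ vector bundles}\}$, and deduce the second from the decomposition $\Bi_{D(K_{*})}\bigl(\prod_{i}G_{i}\bigr)\simeq\prod_{i}\Bi_{D(K_{*})}G_{i}$ applied factorwise. The extra verifications you supply (affineness and faithful flatness of the infinite fiber product via filtered colimits of finite tensor products, nonvanishing of geometric fibers, and the validity of the torsor-under-a-product decomposition without finiteness of $I$) are precisely the details the paper leaves implicit, not a different argument.
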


\begin{proof}
The first statement reduced easily to the case of $\GL_{n}$. For
the second part set $G_{i}=\GL(Q(K_{i}))\times_{D(K_{i})}D(K_{*})$.
Everything follows because $G=\prod_{i}G_{i}$ and therefore 
\[
\Bi_{D(K_{*})}G=\prod_{i}\Bi_{D(K_{*})}G_{i}
\]
where the products are taken over $D(K_{*})$.
\end{proof}
\begin{lem}
\label{lem:frames} Let $\E\in\Vect(\stX)$. Then there is a Cartesian
diagram   \[   \begin{tikzpicture}[xscale=2.7,yscale=-1.2]     \node (A0_0) at (0, 0) {$\Fr(\E)$};     \node (A0_1) at (1, 0) {$D(\E)$};     \node (A1_0) at (0, 1) {$\stX$};     \node (A1_1) at (1, 1) {$\Bi_{D(\E)} (\GL(Q(\E)))$};     \path (A0_0) edge [->]node [auto] {$\scriptstyle{}$} (A0_1);     \path (A1_0) edge [->]node [auto] {$\scriptstyle{}$} (A1_1);     \path (A0_1) edge [->]node [auto] {$\scriptstyle{}$} (A1_1);     \path (A0_0) edge [->]node [auto] {$\scriptstyle{}$} (A1_0);   \end{tikzpicture}   \] 
where $\stX\to\Bi_{D(\E)}\GL(Q(\E))$ is induced by $\E$ as in \ref{prop:generalized GLn}.
In particular $\Fr(\E)\to\stX$ is a torsor under the affine group
scheme $\GL(Q(\E))\to D(\E)$ and it is affine, smooth and surjective.

Let $\E_{*}\colon I\to\Vect(\stX)$ be a collection of sheaves and
consider $G=\prod_{i}\GL(Q(\E_{i}))\to\prod_{i}D(Q(\E_{i}))=D(\E_{*})$.
Then there is a Cartesian diagram   \[   \begin{tikzpicture}[xscale=2.7,yscale=-1.2]     \node (A0_0) at (0, 0) {$\Fr(\E_*)$};     \node (A0_1) at (1, 0) {$D(\E_*)$};     \node (A1_0) at (0, 1) {$\stX$};     \node (A1_1) at (1, 1) {$\Bi_{D(\E_*)} (G)$};     \path (A0_0) edge [->]node [auto] {$\scriptstyle{}$} (A0_1);     \path (A1_0) edge [->]node [auto] {$\scriptstyle{}$} (A1_1);     \path (A0_1) edge [->]node [auto] {$\scriptstyle{}$} (A1_1);     \path (A0_0) edge [->]node [auto] {$\scriptstyle{}$} (A1_0);   \end{tikzpicture}   \] 
where $\stX\to\Bi_{D(\E_{*})}G$ is induced by the $(\E_{i})_{i}$
as in \ref{prop:generalized GLn}. In particular $\Fr(\E_{*})\to\stX$
is affine and faithfully flat as well.
\end{lem}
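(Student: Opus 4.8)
The plan is to deduce both Cartesian squares from the explicit groupoid description of $\Bi_{D(\E)}\GL(Q(\E))$ furnished by Proposition~\ref{prop:generalized GLn}, together with the standard principle that a map into a classifying stack is the same datum as the torsor obtained by pulling back the universal (atlas) torsor.

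First I would recall how $\Bi_{S}G$ classifies torsors for a relative affine group scheme $G\to S$: the atlas $S\to\Bi_{S}G$ is the trivial $G$-torsor, and for any $f\colon\stX\to\Bi_{S}G$ the fiber product $\stX\times_{\Bi_{S}G}S$ is exactly the $G$-torsor on $\stX$ classified by $f$. Taking $S=D(\E)$ and $G=\GL(Q(\E))$, it then suffices to identify the torsor classified by the map $\stX\to\Bi_{D(\E)}\GL(Q(\E))$ induced by $\E$. Unwinding the equivalence of Proposition~\ref{prop:generalized GLn}, under which a locally free sheaf $\shF$ of the prescribed rank over $\Spec B\to D(\E)$ corresponds to the torsor of isomorphisms between $\shF$ and the pullback of $Q(\E)$, the sheaf $\xi^{*}\E$ attached to a point $\xi\colon U\to\stX$ corresponds to $\Isosh_{U}(\xi^{*}\E,\xi^{*}r_{\E}^{*}Q(\E))$; these glue to $\Isosh_{\stX}(\E,r_{\E}^{*}Q(\E))=\Fr(\E)$. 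Equivalently, I would simply check the universal property of the fiber product: over a test scheme $U$ it consists of a point $\xi\colon U\to\stX$, a rank datum $U\to D(\E)$ forced to equal $r_{\E}\circ\xi$, and an isomorphism $\xi^{*}\E\cong\xi^{*}r_{\E}^{*}Q(\E)$, which is precisely a point of $\Fr(\E)(U)$. Either route yields the first Cartesian square.

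The ``in particular'' clause then follows by base change. The atlas $D(\E)\to\Bi_{D(\E)}\GL(Q(\E))$ is a torsor under $\GL(Q(\E))\to D(\E)$, which is affine, smooth and faithfully flat by Proposition~\ref{prop:generalized GLn} and the standard properties of general linear group schemes; since the square is Cartesian, $\Fr(\E)\to\stX$ is a torsor under the pullback of this group along $r_{\E}$ and inherits all three properties (affineness of a torsor under an affine group following by fpqc descent, smoothness and surjectivity being fpqc-local on the target).

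For the collection $\E_{*}$ I would argue by taking products over $i\in I$. By definition $\Fr(\E_{*})=\prod_{i}\Fr(\E_{i})$ and $D(\E_{*})=\prod_{i}D(\E_{i})$ (over $\stX$ and over $R$ respectively), while Proposition~\ref{prop:generalized GLn} identifies $\Bi_{D(\E_{*})}G$ with $\prod_{i}\Bi_{D(\E_{*})}G_{i}$, where $G_{i}=\GL(Q(\E_{i}))\times_{D(\E_{i})}D(\E_{*})$. Since fiber products commute with arbitrary products, the product over $i$ of the first squares (each base changed along $D(\E_{*})\to D(\E_{i})$) is again Cartesian with the asserted corners; the direct check over a test $U$ gives the same thing, a point $\xi$ together with isomorphisms $\xi^{*}\E_{i}\cong\xi^{*}r_{\E_{i}}^{*}Q(\E_{i})$ for all $i$. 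Affineness and faithful flatness of $\Fr(\E_{*})\to\stX$ then follow by base change from the affine, faithfully flat group scheme $G\to D(\E_{*})$ of Proposition~\ref{prop:generalized GLn}. I expect the one point needing care to be the bookkeeping of the two maps $\stX\to\Bi_{D(\E)}\GL(Q(\E))$ and $D(\E)\to\Bi_{D(\E)}\GL(Q(\E))$ and the verification that the torsor classified by the former is exactly the frame bundle; once that identification is in place the rest is formal base change and commuting limits.
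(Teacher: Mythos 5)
Your proposal is correct and follows essentially the same route as the paper, whose entire proof reads ``It follows from the definition of $\Fr$ and the description in \ref{prop:generalized GLn}'': you simply spell out that unwinding, identifying the fiber product over a test scheme with $\Isosh_{\stX}(\E,r_{\E}^{*}Q(\E))$ and handling the family case by products and base change. The added detail (the universal-property check, and restricting to affineness and faithful flatness rather than smoothness in the infinite-product case) is exactly what the paper leaves implicit.
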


\begin{proof}
It follows from the definition of $\Fr$ and the description in \ref{prop:generalized GLn}.
\end{proof}
\begin{lem}
\label{lem:det and sym for locally free sheaves} Let $\E$ be a locally
free sheaf on $\stX$ of rank $n$ and set $V$ for the free $R$-module
of rank $n$ with basis $v_{1},\dots,v_{n}$. Then the map 
\[
\gamma\colon\det\E\to\Sym^{n}(V\otimes\E)\comma\gamma(x_{1}\wedge\cdots\wedge x_{n})=\sum_{\sigma\in S_{n}}(-1)^{\sigma}\prod_{k}(v_{\sigma(k)}\otimes x_{k})
\]
is well defined and has locally free cokernel. In particular its dual
$\Sym^{n}(V\otimes\E)^{\vee}\to\det\E^{\vee}$ is surjective. Moreover,
if $\E$ is free with basis $e_{1},\dots,e_{n}$then $e_{1}\wedge\cdots\wedge e_{n}\in\det\E$
is sent to the determinant of the matrix $(v_{i}\otimes e_{j})_{ij}$,
which lies in $\Sym^{n}(V\otimes\E)$.
\end{lem}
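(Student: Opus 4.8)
The plan is to check well-definedness directly from the formula, then establish local freeness of the cokernel by reducing to the case of a free $\E$ and computing explicitly; the subtle point is that one wants the conclusion over an \emph{arbitrary} base ring $R$, so the characteristic-zero splitting of the antisymmetrization is unavailable and must be replaced by an argument valid in any characteristic.

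For well-definedness, the assignment $x_1\otimes\cdots\otimes x_n\mapsto\sum_{\sigma\in S_n}(-1)^\sigma\prod_k(v_{\sigma(k)}\otimes x_k)$ is $R$-multilinear and natural, hence defines a map $\E^{\otimes n}\to\Sym^n(V\otimes\E)$. If $x_i=x_j$ for some $i\neq j$, the summands indexed by $\sigma$ and by $\sigma\circ(i\,j)$ are the same monomial of $\Sym^n(V\otimes\E)$ with opposite signs, so they cancel in pairs; thus the map vanishes on decomposable tensors with a repeated entry and factors through $\det\E=\Lambda^n\E$, yielding $\gamma$.

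I would then reduce to $\E$ free. The functors $\otimes$, $\Sym^n$, $\det$ and dualization, as well as the formation of $\Coker\gamma$, all commute with flat pullback, while the two assertions to be proved — that $\Coker\gamma$ is locally free and that $\gamma^\vee$ is surjective — are fpqc-local on $\stX$. Pulling back along the frame bundle $\Fr(\E)\to\stX$ of \ref{lem:frames}, which is affine, faithfully flat and trivializes $\E$, I may assume $\E$ free with basis $e_1,\dots,e_n$. Setting $w_{ij}=v_i\otimes e_j$, the $w_{ij}$ form a basis of $V\otimes\E$, so $\Sym^n(V\otimes\E)$ is free on the degree-$n$ monomials in the $w_{ij}$, and the Leibniz expansion gives
\[
\gamma(e_1\wedge\cdots\wedge e_n)=\sum_{\sigma\in S_n}(-1)^\sigma\prod_k w_{\sigma(k),k}=\det\big((w_{ij})_{ij}\big),
\]
which is exactly the final assertion of the statement.

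The key observation is that the $n!$ monomials $\prod_k w_{\sigma(k),k}$ are pairwise distinct, since a permutation is recovered from the multiset $\{(\sigma(k),k)\}_k$; in particular the diagonal monomial $w_{11}w_{22}\cdots w_{nn}$ occurs in $\gamma(e_1\wedge\cdots\wedge e_n)$ with coefficient $1$ and in no other summand. Hence reading off this coefficient is a retraction of $\gamma$, so $\gamma$ is a split monomorphism of the line bundle $\det\E$ onto a direct summand and $\Coker\gamma$ is free. Descending back to $\stX$, $\Coker\gamma$ is locally free; and since $\gamma$ admits a retraction over the cover, $\gamma^\vee\colon\Sym^n(V\otimes\E)^\vee\to(\det\E)^\vee$ admits a section there, hence is surjective over the cover and therefore on $\stX$. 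The only genuine obstacle is this last local freeness over a general $R$, which the unimodularity of the leading monomial $w_{11}\cdots w_{nn}$ settles without any hypothesis on $\car R$.
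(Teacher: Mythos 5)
Your proof is correct and follows essentially the same route as the paper's: well-definedness via the sign-cancellation (equal columns) argument, reduction to the case of a free $\E$, and an explicit linear functional splitting $\gamma$, which simultaneously gives the locally free cokernel and the surjectivity of $\Sym^{n}(V\otimes\E)^{\vee}\to\det\E^{\vee}$. The only cosmetic differences are that the paper reduces to the free case by a direct local check rather than by pulling back along the frame bundle, and its retraction is evaluation at the identity matrix (the functional $v_{i}\otimes e_{j}\mapsto\delta_{ij}$) rather than your coefficient-of-the-diagonal-monomial functional -- both send $\gamma(e_{1}\wedge\cdots\wedge e_{n})$ to $1$.
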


\begin{proof}
Consider the lift $\tilde{\gamma}\colon\E^{\otimes n}\to(V\otimes\E)^{\otimes n}\to\Sym^{n}(V\otimes\E)$
of $\gamma$. This is certainly well defined. In order to show that
it factors through the quotient $\det\E$, so that $\gamma$ would
be well defined, we can assume that $\E$ is free with basis $e_{1},\dots,e_{n}$.
Set $y_{ij}=v_{i}\otimes e_{j}\in V\otimes\E$ and $e_{\omega}=e_{\omega(1)}\otimes\cdots\otimes e_{\omega(n)}\in\E^{\otimes n}$
for any map $\omega\colon\{1,\dots,n\}\to\{1,\dots,n\}$. We have
\[
\tilde{\gamma}(e_{\omega})=\sum_{\sigma\in S_{n}}(-1)^{\sigma}\prod_{k}y_{\sigma(k)\omega(k)}=\det(y_{i,\omega(j)})
\]
It follows that $\widetilde{\gamma}(e_{\omega})=0$ if $\omega$ is
not a permutation, that $\widetilde{\gamma}(e_{\tau\omega})=-\widetilde{\gamma}(e_{\omega})$
if $\tau$ is a transposition and that $\widetilde{\gamma}(e_{\id})=\det(v_{i}\otimes e_{j})$,
as required.

We now show that the cokernel of $\gamma$ is locally free. This is
equivalent to the surjectivity of $\Sym^{n}(V\otimes\E)^{\vee}\to\det\E^{\vee}$
and it can be check locally. Consider $e\colon\Sym^{n}(V\otimes\E)\to R$
given by $e(y_{ij})=\delta_{ij}$, the Kronecker symbol. We see that
the composition $\det\E\to\Sym^{n}(V\otimes\E)\arrdi eR$ maps $e_{1}\wedge\cdots\wedge e_{n}\to\det(I)=1$,
where $I=(\delta_{ij})$ is the identity matrix.
\end{proof}
\begin{lem}
\label{lem:standard quotients} Let $\E$ be a vector bundle on a
fibered category $\stX$ over $R$ then:
\begin{enumerate}
\item the sheaf $\det\E$ is a quotient of a direct sum of the $\E{}^{\otimes m}$
for $m\in\N$;
\item if $\stX$ is defined over $\Q$ then $(\Sym^{q}\E)^{\vee}$ is a
quotient of $(\E^{\vee})^{\otimes q}$.
\item the sheaf $\Sym^{q}(\E\otimes M)^{\vee}$, for a locally free $R$-module
$M$, is a quotient of a direct sum of finite tensor products of the
$\Sym^{a}(\E)^{\vee}$ for $a\in\N$.
\end{enumerate}
\end{lem}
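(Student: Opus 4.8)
The plan is to treat the three parts separately, reducing each to standard multilinear algebra together with the behaviour of $\Sym$, $\Lambda$ and $\otimes$ under direct summands; throughout I would check surjectivity on test objects $\xi\colon\Spec A\to\stX$, where $\xi^{*}\E$ is an honest locally free sheaf of locally constant rank. For (1) I would exploit the rank decomposition. On the open and closed rank locus $\stX_{n}\to\stX$ the bundle $\E$ has rank $n$ and $\det\E$ restricts to $\Lambda^{n}\E$, and the antisymmetrisation map $\E^{\otimes n}\to\Lambda^{n}\E$ is surjective. I would then define a morphism $\bigoplus_{m\in\N}\E^{\otimes m}\to\det\E$ whose restriction to $\stX_{n}$ is this antisymmetrisation on the summand $\E^{\otimes n}$ and is zero on every summand $\E^{\otimes m}$ with $m\neq n$. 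Concretely, for each $\xi\colon\Spec A\to\stX$ one decomposes $\Spec A$ into the (finitely many, open and closed) loci where $\xi^{*}\E$ has constant rank and uses the above description on each; naturality in $\xi$ assembles these into a global morphism, and surjectivity holds because it already holds after restriction to each $\stX_{n}$.

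For (2) I would use that over $\Q$ the symmetrisation idempotent $\frac{1}{q!}\sum_{\sigma\in S_{q}}\sigma\in\End(\E^{\otimes q})$ exhibits $\Sym^{q}\E$ as a direct summand of $\E^{\otimes q}$, say $\E^{\otimes q}\cong\Sym^{q}\E\oplus K$. Dualising and using $(\E^{\otimes q})^{\vee}=(\E^{\vee})^{\otimes q}$, the projection onto the first factor gives the desired surjection $(\E^{\vee})^{\otimes q}\to(\Sym^{q}\E)^{\vee}$.

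Part (3) is the main point. Since $M$ is a finitely generated locally free (hence projective) $R$-module, it is a direct summand of some free module $R^{\oplus N}$, so $\E\otimes M$ is a direct summand of $\E\otimes R^{\oplus N}=\E^{\oplus N}$. Applying $\Sym^{q}$ to a splitting and using that any functor carries a retract to a retract, $\Sym^{q}(\E\otimes M)$ is a direct summand of $\Sym^{q}(\E^{\oplus N})$. By the multinomial decomposition of the symmetric power of a direct sum, valid over any base ring, the latter equals the finite direct sum $\bigoplus_{a_{1}+\cdots+a_{N}=q}\Sym^{a_{1}}\E\otimes\cdots\otimes\Sym^{a_{N}}\E$. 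Dualising this finite direct sum, $\Sym^{q}(\E\otimes M)^{\vee}$ becomes a direct summand, hence a quotient, of $\bigoplus_{a_{1}+\cdots+a_{N}=q}(\Sym^{a_{1}}\E)^{\vee}\otimes\cdots\otimes(\Sym^{a_{N}}\E)^{\vee}$, which is a direct sum of finite tensor products of the sheaves $(\Sym^{a}\E)^{\vee}$.

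The steps I expect to require the most care are, in (3), checking that the multinomial decomposition of $\Sym^{q}(\E^{\oplus N})$ holds over an arbitrary base ring $R$ (no characteristic hypothesis is available here, unlike in (2)) and that $\Sym^{q}$ genuinely preserves direct summands of quasi-coherent sheaves on $\stX$; and in (1), phrasing the construction so that it is manifestly natural across the varying-rank loci rather than only on a chosen cover. Both points are routine once everything is set up on test objects.
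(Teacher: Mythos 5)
Your proposal is correct and takes essentially the same route as the paper: part (1) via the rank decomposition $\stX=\bigsqcup_{n}\stX_{n}$ and the canonical surjection $\E^{\otimes n}\to\Lambda^{n}\E=\det\E$ on each rank locus, and part (3) via realizing $M$ as a direct summand of a free module, the multinomial decomposition $\Sym^{q}(A\oplus B)=\bigoplus_{u+v=q}\Sym^{u}(A)\otimes\Sym^{v}(B)$ (valid over any ring), and dualising the resulting split inclusion. In part (2) you split $\Sym^{q}\E$ off $\E^{\otimes q}$ with the symmetrisation idempotent and then dualise, whereas the paper composes the canonical surjection $(\E^{\vee})^{\otimes q}\to\Sym^{q}(\E^{\vee})$ with the characteristic-zero isomorphism $\Sym^{q}(\E^{\vee})\simeq(\Sym^{q}\E)^{\vee}$; this is the same use of the $\Q$-hypothesis, merely packaged differently.
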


\begin{proof}
We can assume that $\stX$ is a Zariski stack. Set $I=\text{ranks}(\E)$
and denote by $\stX_{n}$ for $n\in I$ the locus in $\stX$ where
$\E$ has rank $n$, so that $\sqcup_{n}\stX_{n}=\stX$.

$1)$. There are surjective maps

\[
\bigoplus_{n\in I}\E^{\otimes n}\to\bigoplus_{n\in I}(\E^{\otimes n})_{|\stX_{n}}\to\bigoplus_{n\in I}\det(\E_{|\stX_{n}})\simeq\det\E
\]
 $2)$ There is a surjective map 
\[
(\E^{\vee})^{\otimes q}\to\Sym^{q}(\E^{\vee})\simeq(\Sym^{q}\E)^{\vee}
\]
$3)$ Write $M\oplus N=R^{l}$. We use the formula 
\[
\Sym^{q}(A\oplus B)=\bigoplus_{u+v=q}\Sym^{u}(A)\otimes\Sym^{v}(B)
\]
We have
\[
\Sym^{q}(\E\otimes R^{l})=\Sym^{q}(\E\otimes M)\oplus P\then\Sym^{q}(\E\otimes R^{l})^{\vee}\twoheadrightarrow\Sym^{q}(\E\otimes M)^{\vee}
\]
Moreover $\Sym^{q}(\E\otimes R^{l})^{\vee}$ can be written as a quotient
of a direct sum of finite tensor products of the$\Sym^{a}(\E)^{\vee}$
for $a\in\N$. 
\end{proof}
\begin{lem}
\label{lem:generators for GLn} Let $I$ be a set and $\E_{*}\colon I\to\Vect(R)$
be any map. Set

\[
\GL(\E_{*})=\prod_{i\in I}\GL(\E_{i})\to\Spec R
\]
and $\shF_{i}$ for the locally free sheaf on $\Bi\GL(\E_{*})$ coming
from the universal one on $\Bi\GL(\E_{i})$, that is the $R$-module
$\E_{i}$ with the natural action of $\GL(\E_{i})$. Consider the
following sets: 
\[
\shR_{1}=\{\det\shF_{i},(\Sym^{m}\shF_{i})^{\vee}\}_{i\in I,m\in\N}\comma\shR_{2}=\{\shF_{i},(\Sym^{m}\shF_{i})^{\vee}\}_{i\in I,m\in\N}\comma\shR_{3}=\{\shF_{i},\shF_{i}^{\vee}\}_{i\in I}
\]
 Then the subcategory of $\Loc(\Bi\GL(\E_{*}))$ consisting of all
tensor products of sheaves in $\shR_{1}$ (resp. $\shR_{2})$ generates
$\QCoh(\Bi\GL(\E_{*}))$.

If $R$ is a $\Q$-algebra then we can replace $(\Sym^{m}\shF_{i})^{\vee}$
by $\shF_{i}^{\vee}$ in the statement above.
\end{lem}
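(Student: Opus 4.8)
The plan is to identify $\QCoh(\Bi\GL(\E_*))$ with the category of comodules over the Hopf algebra $\mathcal{O}(\GL(\E_*))=\bigotimes_i\mathcal{O}(\GL(\E_i))$, i.e. the structure sheaf of the atlas $\Spec R\to\Bi\GL(\E_*)$ viewed as the regular representation, so that $\shF_i$ becomes the standard comodule $\E_i$. I would first record the elementary reduction that a family $\shD$ of locally free sheaves generates $\QCoh$ if and only if every element $v$ of every comodule $M$ lies in the image of a comodule map into $M$ from a direct sum of sheaves in $\shD$; summing such maps over all $v$ then produces the required surjection onto $M$. This reformulation is exactly what lets me manufacture \emph{quotients} rather than the sub-objects that the naive pushforward $\pi_*$ along the atlas would give.

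Next I would describe the finite free subcoalgebras of $\mathcal{O}(\GL(\E_*))$. Since the coaction of any $v\in M$ involves only finitely many coordinates, it factors through $M\otimes C$ for a subcoalgebra $C$ supported on a finite $J\subseteq I$ of the form $C=\bigotimes_{i\in J}\det^{-k}\mathcal{O}(\End\E_i)_{\leq d}$. Here $\mathcal{O}(\End\E_i)$ is the symmetric algebra on the matrix coordinates, whose degree-$e$ part is, as a comodule, $\Sym^e(\shF_i\otimes R^{n_i})$ with $n_i=\rk\E_i$, a direct sum of tensor products of the $\Sym^a\shF_i$; and $\det^{-1}$ is the one-dimensional comodule $(\det\shF_i)^{\vee}$, whose inclusion into $\mathcal{O}(\End\E_i)$ is precisely the map $\gamma$ of Lemma \ref{lem:det and sym for locally free sheaves}. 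Thus each such $C$ is a finite free $R$-module, a genuine subcoalgebra, and a direct sum of tensor products of $\{\Sym^a\shF_i,(\det\shF_i)^{\vee}\}$.

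Passing to duals is the crucial move. Because $C$ is finite free and a subcoalgebra, $C^{\vee}$ is an algebra, comodules whose coaction lands in $M\otimes C$ are the same as $C^{\vee}$-modules, and the map $C^{\vee}\to M$, $\xi\mapsto\xi\cdot v$, is a comodule map sending the unit of $C^{\vee}$ to $v$. Hence every $v$ is hit by a map from $C^{\vee}$, and by the first step it suffices to see that $C^{\vee}$ is a quotient of a direct sum of tensor products of sheaves in $\shR_1$. Dualizing the description above, $C^{\vee}$ is a direct sum of tensor products of the $(\Sym^a\shF_i)^{\vee}$ and $\det\shF_i$: the factors $(\Sym^e(\shF_i\otimes R^{n_i}))^{\vee}$ are quotients of tensor products of $(\Sym^a\shF_i)^{\vee}$ by Lemma \ref{lem:standard quotients}(3), while $(\Sym^a\shF_i)^{\vee}$ and $\det\shF_i$ already lie in $\shR_1$. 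This proves that tensor products of $\shR_1$ generate.

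Finally I would deduce the variants by transitivity of ``being a quotient of tensor products'': since $\det\shF_i$ is a quotient of $\bigoplus_m\shF_i^{\otimes m}$ by Lemma \ref{lem:standard quotients}(1), generation by $\shR_1$ yields generation by $\shR_2$; and over a $\Q$-algebra $(\Sym^m\shF_i)^{\vee}$ is a quotient of $(\shF_i^{\vee})^{\otimes m}$ by Lemma \ref{lem:standard quotients}(2), which replaces the symmetric-power duals by $\shF_i^{\vee}$ and gives the statements for $\{\det\shF_i,\shF_i^{\vee}\}$ and for $\shR_3$. The main obstacle is getting the direction of generation right over an arbitrary base: the cofree description only embeds $M$ into $M\otimes\mathcal{O}(\GL(\E_*))$, so one must instead realize $M$ through maps \emph{out of} the duals $C^{\vee}$ of finite free subcoalgebras, and one must carefully control, in positive or mixed characteristic, the difference between $\Sym^a\shF_i$, its dual, and divided powers. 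This is exactly why the generators are chosen to be $(\Sym^m\shF_i)^{\vee}$ and $\det\shF_i$, and why Lemmas \ref{lem:standard quotients} and \ref{lem:det and sym for locally free sheaves} are the key inputs.
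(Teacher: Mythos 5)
Your proposal is correct and takes essentially the same approach as the paper: both filter the regular representation of $\GL(\E_{*})$ by the finite locally free subcomodules of fractions $f/\det^{k}$ with $\deg f$ bounded, dualize, and recognize the duals as quotients of direct sums of tensor products of the stated generators via \ref{lem:det and sym for locally free sheaves} and \ref{lem:standard quotients}, deducing the $\shR_{2}$ and $\Q$-algebra variants from \ref{lem:standard quotients} exactly as the paper does. The only real difference is presentational: you re-derive inline the comodule-theoretic reduction (the coaction of any element of any comodule factors through such a finite free subcoalgebra $C$, and evaluation gives an equivariant map $C^{\vee}\to M$ hitting that element), whereas the paper outsources precisely this step to \cite[Prop 8.2]{Tonini2014}.
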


\begin{proof}
Denote by $\shC\subseteq\QCoh(\Bi\GL(\E_{*}))$ the full subcategory
generated by the sheaves $\shR_{i}$ in the statement (here $i$ can
be $1,2,3$ depending on the hypothesis). We have to show that $\shC$
generates $\QCoh(\Bi\GL(\E_{*}))$, so that, a posteriori, $\shC=\QCoh(\Bi\GL(\E_{*}))$.
We claim that $R[\GL(\E_{*})]$ is a direct limit of representations
whose dual belongs to $\shC$. By \cite[Prop 8.2]{Tonini2014} this
will ends the proof.

The representation $R[\GL(\E_{*})]$ is a direct limit of tensor products
of the regular representations $R[\GL(\E_{i})]$ for $i\in I$. This
allows us to reduce to the case of $\GL(\E)$ when $\E$ is a vector
bundle on $\E$ (i.e. $I$ has one element). Call $\shF$ the universal
locally free sheaf on $\Bi\GL(\E)$, that is $\shF=\E$ with the action
of $\GL(\E)$. Recall that for any locally free representation $\shH$
of $R[\GL(\E)]$ there is a canonical isomorphism of $R$-modules
$\Hom^{\GL(\E)}(\shH,R[\GL(\E)])\simeq\shH^{\vee}$ (see \cite[Rem 8.1]{Tonini2014})
So there are equivariant maps 
\[
\shF\otimes\Hom^{\GL(\E)}(\shF,R[\GL(\E)])\simeq\shF\otimes\E^{\vee}\to R[\GL(\E)]
\]
 and, for $m\in\N$,
\[
(\det\shF)^{-\otimes m}\otimes\Hom^{\GL(\E)}((\det\shF)^{-\otimes m},R[\GL(\E)])\simeq(\det\shF)^{-\otimes m}\otimes(\det\E)^{\otimes m}\to R[\GL(\E)]
\]
Consider the equivariant maps
\[
A_{m}=[\bigoplus_{q=0}^{2mn}\Sym^{q}(\shF\otimes\E^{\vee})]\otimes[(\det\shF)^{-\otimes m}\otimes(\det\E)^{\otimes m}]\arr R[\GL(\E)]\otimes R[\GL(\E)]\to R[\GL(\E)]
\]
We claim that those maps are injective and their images form an increasing
sequence of sub representations saturating $R[\GL(\E)]$. This statement
is local, so we can assume $\E=\odi{\stX}^{n}$, so that $\GL(\E)=\GL_{n}$.
As usual we can write $R[\GL_{n}]=R[X_{uv}]_{\det}$ for $1\leq u,v\leq n$,
where $\det$ is the determinant polynomial. In this case $\shF\otimes\E^{\vee}\to R[\GL_{n}]$
is an isomorphism onto the $R$-submodule generated by all the $X_{u,v}$,
while $(\det\shF)^{-\otimes m}\otimes(\det\E)^{\otimes m}\to R[\GL(\E)]$
is an isomorphism onto the $R$-submodule generated by $\det^{-m}$.
Thus $A_{m}\to R[\GL_{n}]$ is an isomorphism onto the set of fractions
$f/\det^{m}$ where $f$ is a polynomial of degree less or equal to
$2nm$. It is now easy to see that $\cup_{m}A_{m}=R[\GL_{n}]$.

We come back to the general setting. We need to show that $A_{m}^{\vee}\in\shC$
for all $m$. The last statement follows from the first thanks to
\ref{lem:standard quotients}, $2)$. Since $\shC$ is closed under
tensor product and direct sum, we have to show that 
\[
(\Sym^{q}(\shF\otimes\E^{\vee}))^{\vee}\text{ for }q>0,(\det\shF\otimes\det\E{}^{-1})\in\shC
\]
For the first sheaf it follows from \ref{lem:standard quotients},
$3)$. For the second, since $\det\E^{-1}$ is a quotient of a free
$R$-module, it is enough to show that $\det\shF\in\shC$. For $\shR_{1}$
is clear. For $\shR_{2}$ we use \ref{lem:standard quotients}, $1)$.
\end{proof}

\section{Pseudo-affine stacks}

One of the key points in the proof of Theorem \ref{thma} is a characterization
of the following stacks.
\begin{defn}
A \emph{pseudo-affine} stack is a quasi-compact fpqc stack with quasi-affine
diagonal such that all quasi-coherent sheaves on it are generated
by global sections. In other words a pseudo-affine stack is a quasi-compact
fpqc stack $\stX$ such that $\{\odi{\stX}\}$ generates $\QCoh(\stX)$.

A map $f\colon\stY'\arr\stY$ of fibered categories is called \emph{pseudo-affine
}if for all maps $T\arr\stY$ from an affine scheme the fiber product
$T\times_{\stY}\stY'$ is pseudo-affine.
\end{defn}

A quasi-affine scheme is pseudo-affine. We will prove a pseudo-affine
stack which is algebraic is quasi-affine. This result appeared before
in \cite[Proposition 3.1]{Gross2013}. We will show that pseudo-affine
stacks are indeed just arbitrary intersection of quasi-compact open
subsets of an affine scheme (see \ref{prop:arbitrary intersection of quasi-compact open is pseudo-affine}).
In general a pseudo-affine sheaf is not quasi-affine. An example is
the sheaf intersection of all the complement of closed points in $\Spec k[x,y]$,
where $k$ is a field (see \ref{exa:counterexample pseudo-affine}).

We start with a first characterization of pseudo-affine stacks.
\begin{prop}
\label{lem:recognizing affine rings from global sections} Let $\stX\arrdi{\pi}\Spec R$
be a quasi-compact fpqc stack with quasi-affine diagonal. Then the
following conditions are equivalent:

\begin{enumerate}
\item the stack $\stX$ is pseudo-affine;
\item the map $\pi^{*}\pi_{*}\shF\arr\shF$ is surjective for all $\shF\in\QCoh\stX$;
\item the stack $\stX$ is equivalent to a sheaf and there exists a flat
monomorphism $\stX\arr\Spec B$, where $B$ is a ring.
\end{enumerate}
In this case the map $p\colon\stX\arr\Spec\Hl^{0}(\odi{\stX})$ is
a flat monomorphism, $p_{*}\colon\QCoh\stX\arr\Mod\Hl^{0}(\stX)$
is fully faithful and $p^{*}p_{*}\simeq\id$. Moreover if $\stX\times_{\Hl^{0}(\odi{\stX})}k\neq\emptyset$
for all geometric points $\Spec k\arr\Spec\Hl^{0}(\odi{\stX})$ then
$p$ is an isomorphism.
\end{prop}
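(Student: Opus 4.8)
The plan is to run the cycle $(1)\Rightarrow(2)\Rightarrow(3)\Rightarrow(1)$ and to read off the ``moreover'' statements along the way. The equivalence $(1)\Leftrightarrow(2)$ is formal: writing $M=\pi_{*}\shF$ and choosing a surjection $R^{(J)}\twoheadrightarrow M$, right exactness of $\pi^{*}$ turns the counit $\pi^{*}\pi_{*}\shF\to\shF$ into a factorization $\odi{\stX}^{(J)}\twoheadrightarrow\pi^{*}\pi_{*}\shF\to\shF$, so surjectivity of the counit is exactly the statement that $\shF$ is a quotient of a direct sum of copies of $\odi{\stX}$; conversely any such presentation is given by global sections and hence factors through the counit. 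Thus $(2)$ says precisely that $\{\odi{\stX}\}$ generates $\QCoh\stX$.

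The core input for the ``moreover'' part, and for $(2)\Rightarrow(3)$, I would obtain by applying Theorem \ref{thm:main thm main paper} to the monoidal subcategory $\shD=\{\odi{\stX}\}$ (here $\odi{\stX}\otimes\odi{\stX}=\odi{\stX}$, and by $(1)$ it generates). Since $\shD$ has a single object with $\End_{\stX}(\odi{\stX})=\Hl^{0}(\odi{\stX})=:B$, a contravariant $R$-linear functor $\shD\to\Mod R$ is the same as a $B$-module, and $\Omega^{*}$ becomes $p_{*}=\Hl^{0}(\stX,-)\colon\QCoh\stX\to\Mod B$ attached to $p\colon\stX\to\Spec B$. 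The theorem then gives at once that $p_{*}$ is fully faithful with essential image the left exact functors and that its left adjoint $p^{*}=\shF_{*,\shD}$ is a quasi-inverse there; equivalently the counit $p^{*}p_{*}\simeq\id$ is an isomorphism. This is the content of the three ``moreover'' assertions about $p$, and it is compatible with $(2)$ through the factorization $\pi^{*}\pi_{*}\shF=p^{*}((\pi_{*}\shF)\otimes_{R}B)\to p^{*}p_{*}\shF\to\shF$.

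Next I would deduce that $p$ is a monomorphism, hence that $\stX$ is a sheaf. Full faithfulness of $p_{*}$ expresses that $p$ is a (schematic) epimorphism onto $\Spec B$; reducing along a faithfully flat affine cover $V\to\stX$ (which exists as $\stX$ is a quasi-compact fpqc stack) and using that the diagonal of $\stX$ is quasi-affine, one checks that $\Delta_{p}\colon\stX\to\stX\times_{\Spec B}\stX$ is an equivalence. As $\Spec B$ is a sheaf, a monomorphism $\stX\to\Spec B$ makes $\stX(T)\to\Spec B(T)$ fully faithful into a set for every affine $T$, so $\stX$ has no nontrivial automorphisms or isomorphisms and is equivalent to a sheaf. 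For the reverse implication $(3)\Rightarrow(1)$ I would use flat base change along the two projections $\stX\times_{\Spec B}\stX\rightrightarrows\stX$: for a flat monomorphism the fiber product collapses to $\stX$ with both projections the identity, so $p^{*}p_{*}\shF\simeq\shF$; writing $p_{*}\shF$ as a quotient of a free $B$-module and applying the (direct-sum-preserving, right exact) functor $p^{*}$ then exhibits $\shF$ as a quotient of $\bigoplus\odi{\stX}$, which is $(1)$.

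The main obstacle is flatness of $p$ in $(2)\Rightarrow(3)$: the isomorphism $p^{*}p_{*}\simeq\id$ does \emph{not} by itself force flatness, as a closed immersion $\Spec(A/I)\hookrightarrow\Spec A$ also has invertible counit; the point where flatness must enter is the \emph{canonicity} $B=\Hl^{0}(\odi{\stX})$, which rules out such examples. I would reduce flatness to the vanishing of $\ker(p^{*}I\to\odi{\stX})$ for finitely generated ideals $I\subseteq B$, i.e. to $\Tor_{1}^{B}(\odi{\stX},B/I)=0$, noting that this kernel is quasi-coherent on the pseudo-affine $\stX$ and hence generated by global sections, so that it vanishes once the unit $I\to p_{*}p^{*}I$ is an isomorphism. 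Establishing this is where I expect the real work to lie; the cleanest route is probably to prove flatness faithfully-flat-locally on the source, by producing an affine cover $V=\Spec C\to\stX$ with $B\to C$ flat and invoking descent of flatness along $V\to\stX$. Finally, for the last assertion, nonemptiness of all geometric fibers makes the flat monomorphism $p$ surjective, hence faithfully flat; a faithfully flat monomorphism is an fpqc cover whose two projections from $\stX\times_{\Spec B}\stX$ agree, so descent gives $\stX\simeq\Spec B$ and $p$ is an isomorphism.
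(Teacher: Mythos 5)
Your skeleton matches the paper's proof: $(1)\Leftrightarrow(2)$ is the same formal argument, and the key move of applying Theorem \ref{thm:main thm main paper} to $\shD=\{\odi{\stX}\}$, identifying $\Omega^{*}$ with $p_{*}$ and $\shF_{*,\shD}$ with $p^{*}$, is exactly what the paper does to get full faithfulness of $p_{*}$ and $p^{*}p_{*}\simeq\id$. But the two remaining assertions, flatness of $p$ and the monomorphism property (hence that $\stX$ is a sheaf), are left unproved, and they are precisely the substance of the paper's proof. On flatness: you call it the main obstacle, but it is already contained in the theorem you invoke. Theorem \ref{thm:main thm main paper} states that the left adjoint $\shF_{*,\shD}$ is \emph{exact}; under your identification $\shF_{*,\shD}=p^{*}$, exactness of $p^{*}$ is by definition the flatness of $p$ (\cite[Prop 1.14]{Tonini2014}), and this one line is the paper's entire argument. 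Your fallback sketches, moreover, would not work: the unit $I\to p_{*}p^{*}I$ is in general \emph{not} an isomorphism even in the model flat situation (for $\stX=\A_{k}^{2}\setminus\{0\}\subseteq\Spec B$ with $B=k[x,y]$ and $I=(x,y)$ one has $p^{*}I\simeq\odi{\stX}$, hence $p_{*}p^{*}I=B\neq I$), so the criterion you propose to verify is false exactly where you need it; and producing an affine cover $V=\Spec C\to\stX$ with $B\to C$ flat presupposes the flatness of $p$ you are trying to prove, so that descent argument is circular.

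The second gap is the phrase ``one checks that $\Delta_{p}\colon\stX\to\stX\times_{\Spec B}\stX$ is an equivalence.'' This step is not formal; it is the bulk of the paper's proof and the only place the quasi-affine diagonal hypothesis enters, and Remark \ref{rem: classifying stack of elliptic curve} shows the implication genuinely fails without that hypothesis ($\Bi_{k}E$ satisfies $(2)$ but is not a sheaf). What the paper actually does: (i) base change along a representable fpqc cover $h\colon V=\Spec C\to\stX$, show $\stX\times_{\Spec B}V$ is again pseudo-affine with $\Hl^{0}$ equal to $C$, and reduce by descent to the case where $p$ has a section $s$; (ii) in that case prove that $p_{*}$ is an equivalence with $s_{*}\simeq p^{*}$ and $s^{*}\simeq p_{*}$; (iii) use the quasi-affine diagonal to see that $U=\Spec B\times_{\stX}\Spec C$ is a quasi-affine scheme with $\Hl^{0}(\odi{U})\simeq C$, so that $U\to\Spec C$, and by descent $s$, are open immersions; (iv) show the reduced complement $Z$ of $U$ has $\Hl^{0}(\odi{Z})=0$, hence is empty, so $s$ is an isomorphism. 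None of this is in your outline, and your final claim (nonempty geometric fibers force $p$ to be an isomorphism) rests on it. By contrast, your $(3)\Rightarrow(1)$ is essentially sound; the paper implements the same idea slightly more carefully, pulling back to an affine fpqc cover of $\stX$ so that flat base change is applied along an affine flat map rather than along $p$ itself.
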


\begin{proof}
$2)\then1)$. Given $\shF\in\QCoh\stX$, take a surjective map $R^{(I)}\arr\pi_{*}\shF$.
In this case the composition
\[
\odi{\stX}^{(I)}\simeq\pi^{*}R^{(I)}\arr\pi^{*}\pi_{*}\shF\arr\shF
\]
 is surjective.

$1)\then2)$. A sheaf $\shF\in\QCoh\stX$ is generated by global sections
and the image of $\pi^{*}\pi_{*}\shF\arr\shF$ contains all of them.

$3)\then1)$. Denote by $p\colon\stX\arr\Spec B$ the flat monomorphism.
We are going to show that $\delta_{\shF}\colon p^{*}p_{*}\shF\arr\shF$
is an isomorphism for all $\shF\in\QCoh\stX$. Arguing as in $2)\then1)$
this will conclude the proof. By hypothesis there exists a representable
fpqc covering $h\colon\Spec C\arr\stX$ and we must prove that $h^{*}\delta_{\shF}$
is an isomorphism. Let $f=ph\colon\Spec C\arr\Spec B$ be the composition
and consider the commutative diagram   \[   \begin{tikzpicture}[xscale=2.0,yscale=-1.0]     \node (A0_0) at (0, 0) {$\Spec C$};     \node (A0_1) at (1, 0) {$\stX\times_B C$};     \node (A0_2) at (2, 0) {$\Spec C$};     \node (A1_1) at (1, 1) {$\stX$};     \node (A1_2) at (2, 1) {$\Spec B$};     \path (A0_0) edge [->]node [auto] {$\scriptstyle{s}$} (A0_1);     \path (A0_1) edge [->]node [auto] {$\scriptstyle{\alpha}$} (A0_2);     \path (A1_1) edge [->]node [auto] {$\scriptstyle{p}$} (A1_2);     \path (A0_2) edge [->]node [auto] {$\scriptstyle{f}$} (A1_2);     \path (A0_0) edge [->]node [auto] {$\scriptstyle{h}$} (A1_1);     \path (A0_1) edge [->]node [auto] {$\scriptstyle{t}$} (A1_1);   \end{tikzpicture}   \] 
Since $\alpha$ is a monomorphism with a section, $\alpha$ and $s$
are inverses of each other. The pullback along $t$ of the adjoint
map $\delta_{\shF}\colon p^{*}p_{*}\shF\to\shF$ is 
\[
t^{*}p^{*}p_{*}\shF=\alpha^{*}(f^{*}p_{*}\shF)\to\alpha^{*}\alpha_{*}(t^{*}\shF)\to t^{*}\shF
\]
 The first map is an isomorphism because $f$ is flat and therefore
$f^{*}p_{*}\shF\to\alpha_{*}t^{*}\shF$ is an isomorphism (see \cite[Prop 1.15]{Tonini2014}).
The second map is an isomorphism because $\alpha$ is an isomorphism.
Pulling back again along $s$ we obtain the result.

$1)\then3)$. Set $B=\Hl^{0}(\odi{\stX})$ and $p\colon\stX\arr\Spec B$
the induced map. Notice that $\L_{B}(\{\odi{\stX}\},B)\simeq\Mod B$
and under this isomorphism 
\[
\Omega^{*}\colon\QCoh\stX\arr\L_{B}(\{\odi{\stX}\},B)\text{ and }\shF_{*,\{\odi{\stX}\}}\colon\L_{B}(\{\odi{\stX}\},B)\arr\QCoh\stX
\]
 correspond to $p_{*}\colon\QCoh\stX\arr\Mod B$ and $p^{*}\colon\Mod B\arr\QCoh\stX$
respectively. Since $\{\odi{\stX}\}$ generates $\QCoh(\stX)$ by
hypothesis we can apply \ref{thm:main thm main paper} and conclude
that $p_{*}\colon\QCoh\stX\arr\Mod B$ is fully faithful, $p^{*}\colon\Mod B\to\QCoh(\stX)$
is exact, that is $p\colon\stX\to\Spec B$ is flat by \cite[Prop 1.14]{Tonini2014},
and $p^{*}p_{*}\simeq\id$.

We want to show that $\stX\arr\Spec B$ is fully faithful or, equivalently,
that the diagonal $\stX\arr\stX\times_{B}\stX$ is an equivalence.
Let $h\colon V=\Spec C\arr\stX$ be a representable fpqc covering
and consider the Cartesian diagrams      \[   \begin{tikzpicture}[xscale=2.5,yscale=-1.2]     \node (A0_0) at (0, 0) {$\stX_V$};     \node (A0_1) at (1, 0) {$\stX$};     \node (A0_2) at (2, 0) {$V$};     \node (A0_3) at (3, 0) {$\stX$};     \node (A1_0) at (0, 1) {$V$};     \node (A1_1) at (1, 1) {$\Spec B$};     \node (A1_2) at (2, 1) {$\stX\times_B V$};     \node (A1_3) at (3, 1) {$\stX\times_B \stX$};     \path (A0_0) edge [->]node [auto] {$\scriptstyle{}$} (A0_1);     \path (A0_2) edge [->]node [auto] {$\scriptstyle{h}$} (A0_3);     \path (A1_0) edge [->]node [auto] {$\scriptstyle{f}$} (A1_1);     \path (A0_3) edge [->]node [auto] {$\scriptstyle{\Delta}$} (A1_3);     \path (A0_2) edge [->]node [auto] {$\scriptstyle{s}$} (A1_2);     \path (A0_0) edge [->]node [auto] {$\scriptstyle{}$} (A1_0);     \path (A0_1) edge [->]node [auto] {$\scriptstyle{p}$} (A1_1);     \path (A1_2) edge [->]node [auto] {$\scriptstyle{\id\times h}$} (A1_3);   \end{tikzpicture}   \] where
$f=ph\colon V\to\Spec B$. We are going to prove that $\stX_{V}\to V$
is an isomorphism, but first we show how to conclude the proof from
this. Since $s$ is a section of $\stX_{V}\to V$ it would be an isomorphism
too. Since the horizontal arrow in the second diagram is an fpqc covering,
by descent it would follow that $\Delta$ is an equivalence. Moreover
if if $\stX\times_{B}k\neq\emptyset$ for all geometric points then
$f\colon V\arr\Spec B$ would be an fpqc covering. The fact that $\stX_{V}\to V$
is an equivalence would therefore imply that $p\colon\stX\to\Spec B$
is an equivalence.

Thus our goal is to show that $\stX_{V}\to V$ is an isomorphism.
Since $p$ is flat also $f$ is flat. In particular $\Hl^{0}(\odi{\stX})=B$
implies $\Hl^{0}(\odi{\stX_{V}})=C$. Since $\stX_{V}\arr\stX$ is
affine and $\{\odi{\stX}\}$ generates $\QCoh(\stX)$ it follows that
$\{\odi{\stX\times_{B}V}\}$ generates $\QCoh(\stX\times_{B}V)$ (see
\cite[Rem 7.3]{Tonini2014}). Thus $\stX_{V}$ is pseudo-affine.

Discussion above shows that we can assume that $p\colon\stX\arr\Spec B$
has a section, that we still denote by $s\colon\Spec B\arr\stX$,
and we have to show that it is an isomorphism.

We first prove that $p_{*}\colon\QCoh\stX\arr\Mod B$ is an equivalence.
It suffices to show that, if $M\in\Mod B$, then the map $\gamma_{M}\colon M\arr p_{*}p^{*}M$
is an isomorphism. Notice that $p^{*}\gamma_{M}$ is a section of
the map $(p^{*}p_{*})p^{*}M\arr p^{*}M$ which is an isomorphism.
So $p^{*}\gamma_{M}$ and $\gamma_{M}=s^{*}p^{*}\gamma_{M}$ are isomorphisms.
Since $p_{*}s_{*}\colon\Mod B\arr\Mod B$ is the identity, we can
conclude that $s_{*}\simeq p^{*}$ and $s^{*}\simeq p_{*}$. Consider
the Cartesian diagram   \[   \begin{tikzpicture}[xscale=1.5,yscale=-1.2]     \node (A0_0) at (0, 0) {$U$};     \node (A0_1) at (1, 0) {$\Spec C$};     \node (A1_0) at (0, 1) {$\Spec B$};     \node (A1_1) at (1, 1) {$\stX$};     \path (A0_0) edge [->]node [auto] {$\scriptstyle{g}$} (A0_1);     \path (A1_0) edge [->]node [auto] {$\scriptstyle{s}$} (A1_1);     \path (A0_1) edge [->]node [auto] {$\scriptstyle{h}$} (A1_1);     \path (A0_0) edge [->]node [auto] {$\scriptstyle{t}$} (A1_0);   \end{tikzpicture}   \] 
where $h$ is a representable fpqc covering. Since $\stX$ has quasi-affine
diagonal it follows that $U$ is a quasi-affine scheme. Moreover
\[
\Hl^{0}(\odi U)\simeq g_{*}t^{*}\odi B\simeq h^{*}s_{*}\odi B\simeq h^{*}p^{*}\odi B\simeq C
\]
Thus $g\colon U\arr\Spec C$ and, by descent, $s\colon\Spec B\arr\stX$
are open immersions. We finish by proving that $g$ is surjective.
Let $Z$ be the complement of $U$ in $\Spec C$ with reduced structure.
We have
\[
\Hl^{0}(\odi Z)\simeq p_{*}h_{*}\odi Z\simeq s^{*}h_{*}\odi Z=t_{*}g^{*}\odi Z=0
\]
Thus $Z$ is empty as required.
\end{proof}
\begin{rem}
\label{rem: classifying stack of elliptic curve} The assumption on
the diagonal in \ref{lem:recognizing affine rings from global sections}
is necessary: the stack $\stX=\Bi_{k}E$, where $E$ is an elliptic
curve over a field $k$, is not a sheaf but $\QCoh\stX\simeq\QCoh k$.
\end{rem}

\begin{rem}
\label{rem: generating subcategory under pseudo-affine maps} As a
consequence of \ref{lem:recognizing affine rings from global sections}
a pseudo-affine stack is separated. In particular a pseudo-affine
map $f\colon\stY\arr\stX$ of pseudo-algebraic fiber categories is
quasi-compact and separated. Taking an affine atlas of $\stX$ and
applying \ref{lem:recognizing affine rings from global sections}
we can also conclude that $f^{*}f_{*}\shF\arr\shF$ is surjective
for all $\shF\in\QCoh\stY$. In particular if $\shD\subseteq\QCoh\stX$
generates $\QCoh\stX$ then $f^{*}\shD$ generates $\QCoh\stY$ (see
\cite[Rem 7.3]{Tonini2014}).
\end{rem}

\begin{rem}
\label{rem:absolute relative pseudo-affine}If $\stY\arr\Spec A$
is a map of fibered categories then $\stY$ is pseudo-affine if and
only if $\stY\arr\Spec A$ is pseudo-affine, because if $\stY$ is
pseudo-affine and $g\colon\stY'\arr\stY$ is an affine map, then $\stY'$
is an fpqc sheaf with quasi-affine diagonal and $\odi{\stY'}=g^{*}\odi{\stY}$
generates $\QCoh\stY'$.
\end{rem}

\begin{rem}
\label{rem:fpqc descent of pseudo-affine} Consider a Cartesian diagram  \[   \begin{tikzpicture}[xscale=1.6,yscale=-1.2]     \node (A0_0) at (0, 0) {$Y$};     \node (A0_1) at (1, 0) {$\stY$};     \node (A1_0) at (0, 1) {$X$};     \node (A1_1) at (1, 1) {$\stX$};     \path (A0_0) edge [->]node [auto] {$\scriptstyle{}$} (A0_1);     \path (A1_0) edge [->]node [auto] {$\scriptstyle{}$} (A1_1);     \path (A0_1) edge [->]node [auto] {$\scriptstyle{}$} (A1_1);     \path (A0_0) edge [->]node [auto] {$\scriptstyle{}$} (A1_0);   \end{tikzpicture}   \] of
pseudo-algebraic fpqc stacks such that $X\to\stX$ is a representable
fpqc covering. If $Y\to X$ is pseudo-affine then so is $\stY\to\stX$.

By standard arguments of descent we can assume $\stX=\Spec B$ and
$X=\Spec B'$ affine. So $Y$ is pseudo-affine by hypothesis. Since
$Y\to\stY$ is affine it follows that $\stY$ is quasi-compact with
affine diagonal. In particular, since $B\arr B'$ is flat, we have
$\Hl^{0}(\odi Y)\simeq\Hl^{0}(\odi{\stY})\otimes_{B}B'$ and therefore
we can assume $\Hl^{0}(\odi{\stY})=B$ and $\Hl^{0}(\odi Y)=B'$.
In this case $Y\arr X$ is flat and fully faithful and, since $\stX$
and $\stY$ are fpqc stacks, it follows that also $\stY\arr\stX$
is flat and fully faithful.
\end{rem}

\begin{thm}
\label{thm:flat mono are quasi-affine} A quasi-compact flat monomorphism
of algebraic stacks is quasi-affine.
\end{thm}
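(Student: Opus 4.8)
The plan is to reduce the statement to a flat monomorphism into an affine scheme, repackage it using the global‑section machinery of \ref{lem:recognizing affine rings from global sections}, and then invoke the classical theorem of Raynaud. First I would use that quasi-affineness of a morphism is tested after base change along maps $T\to\stX$ from affine schemes and is stable under such base change; since the base change of a quasi-compact flat monomorphism is again one, this reduces us to the case $\stX=\Spec A$. Now $f\colon\stY\to\Spec A$ is a monomorphism into a sheaf, so the relative diagonal $\stY\to\stY\times_A\stY$ is an equivalence and $\stY$ is a sheaf; being algebraic, $\stY$ is therefore an algebraic space. Moreover the absolute diagonal $\stY\to\stY\times_R\stY$ factors as this equivalence followed by the base change of the closed immersion $\Spec A\to\Spec A\times_R\Spec A$, so $\stY$ has affine diagonal, and $\stY$ is quasi-compact. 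Thus it suffices to prove that a quasi-compact flat monomorphism $\stY\to\Spec A$ with $\stY$ an algebraic space is quasi-affine.

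Next I would set $B=\Hl^{0}(\odi{\stY})$ and apply \ref{lem:recognizing affine rings from global sections}. Since $\stY$ is a quasi-compact fpqc stack with affine diagonal which is a sheaf admitting a flat monomorphism to $\Spec A$, its condition $3)$ is satisfied, so $\stY$ is pseudo-affine and the canonical map $p\colon\stY\to\Spec B$ is a flat monomorphism with $p_{*}$ fully faithful and $p^{*}p_{*}\simeq\id$. Replacing $\Spec A$ by $\Spec B$, I may assume the target is the spectrum of the global functions of $\stY$; equivalently $f_{*}\odi{\stY}$ is a flat $\odi{\Spec A}$-algebra whose multiplication $f_{*}\odi{\stY}\otimes f_{*}\odi{\stY}\to f_{*}\odi{\stY}$ is an isomorphism, the latter coming from the monomorphism identity $\stY\times_{A}\stY\simeq\stY$.

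Finally I would reduce to schemes and cite Raynaud. Choosing an affine \'etale atlas $V=\Spec C\to\stY$ — which is an affine morphism because $\stY$ has affine diagonal — the monomorphism property gives $V\times_{\stY}V\simeq V\times_{\Spec B}V$, an affine scheme, so that $\stY$ is the fpqc quotient of the resulting flat affine groupoid and $p$ is the fpqc-sheaf image of the flat quasi-compact morphism of affine schemes $V\to\Spec B$. The remaining assertion, that a quasi-compact flat monomorphism of schemes is quasi-affine, is the theorem of Raynaud \cite{Raynaud1967}, and the algebraic-space case follows from it by \'etale descent along $V\to\stY$. I expect this last step to be the main obstacle: because $f$ need not be locally of finite presentation, one cannot deduce openness of its image from the openness of flat morphisms of finite presentation, and it is exactly this absence of finiteness that Raynaud's theorem overcomes, the role of \ref{lem:recognizing affine rings from global sections} being to present the monomorphism as the flat algebra $f_{*}\odi{\stY}$ with idempotent multiplication to which that theorem applies.
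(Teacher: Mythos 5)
Your proposal coincides with the paper's proof up to and including the key intermediate step: reduce to a flat monomorphism $\stY\to\Spec A$ with $\stY$ a quasi-compact separated algebraic space, and apply \ref{lem:recognizing affine rings from global sections} (condition 3) to conclude that $\stY$ is pseudo-affine. The gap is in your final step. Raynaud's theorem in \cite{Raynaud1967} is a statement about \emph{schemes}, and at this point $\stY$ is only known to be an algebraic space; you cannot apply that theorem to $p\colon\stY\to\Spec B$ until you know $\stY$ is a scheme, and that is precisely the hard point. The mechanism you invoke to bridge this --- ``the algebraic-space case follows by \'etale descent along $V\to\stY$'' --- is not valid: quasi-affineness (like affineness, or being a scheme) descends along coverings of the \emph{target}, not of the source. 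Knowing that $V$ is affine and $V\times_{\stY}V\simeq V\times_{\Spec B}V$ is affine does not by itself constrain the quotient: already $\PP^1_k$, covered by $V=\A^1_k\sqcup\A^1_k$ with $V\times_{\PP^1}V$ affine, shows that an affine \'etale presentation with affine relations does not force quasi-affineness, so any correct argument must use the monomorphism property in the descent step itself, which yours does not. Applying Raynaud to $V\to\Spec B$ instead is vacuous, since that morphism is not a monomorphism. So the passage from algebraic spaces to schemes --- which is the entire content of extending Raynaud's result beyond schemes --- is missing.

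The paper fills exactly this hole with two citations: having shown $\stY$ pseudo-affine, it invokes \cite[Tag 0B8A]{SP014} to conclude that a pseudo-affine algebraic space is in fact a scheme, and then \cite[Prop 5.1.2]{EGAII} to conclude that a quasi-compact scheme all of whose quasi-coherent sheaves are generated by global sections is quasi-affine. If you replace your final paragraph by these two steps, your argument becomes the paper's proof; note that then \cite{Raynaud1967} is not needed as an ingredient at all (the paper cites it in the introduction only to attribute the scheme case, not to use it).
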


\begin{proof}
We have to prove that if $\stX\to\Spec R$ is a flat monomorphism
and $\stX$ is quasi-compact algebraic space then $\stX$ is a quasi-affine
scheme. First we observe that $\stX$ is separated because $\stX\to\stX\times_{R}\stX$
is an isomorphism. Thus we can apply \ref{lem:recognizing affine rings from global sections}
and conclude that $\stX$ is pseudo-affine. Moreover by \cite[Tag 0B8A]{SP014}
the space $\stX$ is actually a scheme. By \cite[Prop 5.1.2]{EGAII}
we conclude that $\stX$ is quasi-affine.
\end{proof}
\begin{cor}
\label{cor:pseudo-affine algebraic is quasi-affine}A pseudo-affine
algebraic stack is a quasi-affine scheme.
\end{cor}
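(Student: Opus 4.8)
The plan is to deduce this directly from Proposition \ref{lem:recognizing affine rings from global sections} and Theorem \ref{thm:flat mono are quasi-affine}, since the bulk of the work has already been carried out there. Let $\stX$ be a pseudo-affine algebraic stack. By definition $\stX$ is a quasi-compact fpqc stack with quasi-affine diagonal such that $\{\odi{\stX}\}$ generates $\QCoh(\stX)$, so condition $(1)$ of Proposition \ref{lem:recognizing affine rings from global sections} holds. The implication $(1)\then(3)$ then gives me that $\stX$ is equivalent to a sheaf and that the canonical map $p\colon\stX\to\Spec B$, with $B=\Hl^{0}(\odi{\stX})$, is a flat monomorphism. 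Since $\stX$ is moreover algebraic and equivalent to a sheaf, its diagonal is a monomorphism, so $\stX$ is an algebraic space.

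Next I would verify that $p$ is quasi-compact, so that Theorem \ref{thm:flat mono are quasi-affine} applies. This is routine: the structure map $\stX\to\Spec R$ is quasi-compact because $\stX$ is quasi-compact, while $\Spec B\to\Spec R$ is affine and hence quasi-separated; by the cancellation property for quasi-compact morphisms, $p$ is quasi-compact. Thus $p$ is a quasi-compact flat monomorphism of algebraic stacks, and Theorem \ref{thm:flat mono are quasi-affine} yields that $p$ is quasi-affine.

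Finally, since the target $\Spec B$ is an affine scheme, a quasi-affine morphism $p\colon\stX\to\Spec B$ forces $\stX$ to be a quasi-affine scheme: $p$ factors as a quasi-compact open immersion into $\Specsh_{B}(p_{*}\odi{\stX})$, which is affine over $\Spec B$ and hence an affine scheme, so $\stX$ is a quasi-compact open subscheme of an affine scheme, i.e.\ quasi-affine in the absolute sense. This gives the claim.

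I do not expect any genuine obstacle here, as the corollary is essentially a repackaging of Theorem \ref{thm:flat mono are quasi-affine}: all the substantive content (the reduction to pseudo-affineness, the fact that the underlying algebraic space is a scheme, and the final appeal to quasi-affineness) was already isolated in the proof of that theorem. The only two points worth a sentence are the verification that $p$ is quasi-compact and the passage from ``quasi-affine over an affine base'' to ``quasi-affine scheme,'' both of which are standard.
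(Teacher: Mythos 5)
Your proof is correct and is essentially the paper's own argument: the paper's proof of this corollary is literally the one-line instruction to combine Proposition \ref{lem:recognizing affine rings from global sections} (via the implication $(1)\then(3)$, producing the quasi-compact flat monomorphism $p\colon\stX\to\Spec\Hl^{0}(\odi{\stX})$ and the identification of $\stX$ with an algebraic space) with Theorem \ref{thm:flat mono are quasi-affine}. The routine verifications you spell out — quasi-compactness of $p$ by cancellation and the passage from a quasi-affine morphism over an affine base to a quasi-affine scheme — are exactly the details the paper leaves implicit.
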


\begin{proof}
Use \ref{lem:recognizing affine rings from global sections} and \ref{thm:flat mono are quasi-affine}.
\end{proof}
The following property is known for algebraic stacks (see \cite[Corollary 5.11]{Gross2013}
and \cite[Proposition 1.3]{Totaro2002}).
\begin{cor}
\label{cor:quasi-affine and resolution implies affine} A quasi-compact
fpqc stack $\stX$ with quasi-affine diagonal and with the resolution
property has affine diagonal.
\end{cor}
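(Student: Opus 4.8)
The plan is to reduce the affineness of $\Delta_{\stX/R}$ to a pseudo-affineness statement for a frame bundle, to which Proposition~\ref{lem:recognizing affine rings from global sections} applies. Since $\stX$ has the resolution property and is quasi-compact, $\Loc(\stX)$ is essentially small and generates $\QCoh(\stX)$; I would pick a small generating collection $\E_*\colon I\to\Loc(\stX)$ and form the affine flat group scheme $G=\prod_i\GL(Q(\E_i))\to D(\E_*)$ together with the map $\phi\colon\stX\to\stZ:=\Bi_{D(\E_*)}G$ of Lemma~\ref{lem:frames}, whose base change along $D(\E_*)\to\stZ$ is the affine, faithfully flat cover $\Fr(\E_*)\to\stX$. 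Because $D(\E_*)\to\Spec R$ is affine (here one uses that $\stX$ is quasi-compact, so each $\mathrm{ranks}(\E_i)$ is finite) and $G$ is affine over $D(\E_*)$, the stack $\stZ$ has affine diagonal over $R$. Hence in the factorization $\stX\arrdi{\Delta_\phi}\stX\times_\stZ\stX\to\stX\times_R\stX$ the second arrow is a base change of $\Delta_{\stZ/R}$ and is affine, so it suffices to prove that $\Delta_\phi=\Delta_{\stX/\stZ}$ is affine.

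Next I would check affineness of $\Delta_\phi$ after the faithfully flat base change $\Fr(\E_*)\to\stX$. Using the Cartesian square of Lemma~\ref{lem:frames}, and the factorization of $\Fr(\E_*)\to\stZ$ through $D(\E_*)$, one computes $\Fr(\E_*)\times_{\stX}(\stX\times_\stZ\stX)\simeq\Fr(\E_*)\times_{D(\E_*)}\Fr(\E_*)$, under which the pullback of the section $\Delta_\phi$ becomes the relative diagonal $\Delta_{\Fr(\E_*)/D(\E_*)}$. Since affineness is fpqc-local on the target, $\Delta_\phi$ is affine if and only if $\Delta_{\Fr(\E_*)/D(\E_*)}$ is affine; and as $D(\E_*)$ is separated over $R$, the latter follows once $\Fr(\E_*)$ has affine diagonal over $R$.

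The heart of the argument is then to show that $\Fr(\E_*)$ is pseudo-affine. It is quasi-compact (as $\Fr(\E_*)\to\stX$ is affine and $\stX$ is quasi-compact) and an fpqc stack; its diagonal is quasi-affine, because $\Delta_{\Fr(\E_*)/R}$ is the composite of the affine relative diagonal $\Delta_{\Fr(\E_*)/\stX}$ with a base change of the quasi-affine $\Delta_{\stX/R}$. It remains to see that $\{\odi{\Fr(\E_*)}\}$ generates $\QCoh(\Fr(\E_*))$: since $\E_*$ generates $\QCoh(\stX)$ and $\Fr(\E_*)\to\stX$ is affine, the pullbacks $\E_i$ generate $\QCoh(\Fr(\E_*))$ by \cite[Rem 7.3]{Tonini2014}; but on $\Fr(\E_*)$ each $\E_i$ is tautologically identified with the pullback of $Q(\E_i)$ from the affine scheme $D(\E_*)$, hence is a quotient of a free sheaf, so $\{\odi{\Fr(\E_*)}\}$ already generates. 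Thus $\Fr(\E_*)$ is pseudo-affine, and by Proposition~\ref{lem:recognizing affine rings from global sections} it is a sheaf admitting a flat monomorphism to $\Spec\Hl^{0}(\odi{\Fr(\E_*)})$; in particular its diagonal is a closed immersion, hence affine. Tracing back through the two reductions yields that $\Delta_{\stX/R}$ is affine. The main obstacle is the identification of $\Fr(\E_*)$ as pseudo-affine, namely verifying that trivializing the generators turns the structure sheaf into a generator, which is precisely what lets the earlier classification of pseudo-affine stacks take over.
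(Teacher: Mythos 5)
Your proof is correct, and its core coincides exactly with the paper's: both hinge on showing that the frame bundle of a generating family of locally free sheaves is pseudo-affine --- quasi-compactness and quasi-affineness of the diagonal are inherited from $\stX$, the pullbacks of the generators still generate by \ref{rem: generating subcategory under pseudo-affine maps}, and those pullbacks are tautologically trivialized (pulled back from the affine scheme $D(\E_*)$), so $\{\odi{\Fr(\E_*)}\}$ alone generates --- whence \ref{lem:recognizing affine rings from global sections} gives a flat monomorphism to an affine scheme and in particular an affine (closed-immersion) diagonal. Where you genuinely diverge is the final reduction. The paper never forms $\Bi G$ or $D(\E_*)$ in this proof: it takes $\Fr=\prod_{\E\in\shR}\Fr(\E)$ over representatives $\shR$ of \emph{all} locally free sheaves, with $\Fr(\E)$ defined ad hoc as $\bigsqcup_n\Isosh_{\stX}(\odi{\stX}^n,\E)$, then uses the affine diagonal of the pseudo-affine $\Fr$ to get an affine atlas $V\to\Fr$, hence an affine fpqc cover $V\to\stX$ by an affine scheme; since $V\times_{\stX}V\to V\times V$ is then a morphism of affine schemes and is the base change of $\Delta_{\stX}$ along the fpqc covering $V\times V\to\stX\times\stX$, descent concludes directly. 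You instead factor $\Delta_{\stX/R}$ through $\stX\times_{\Bi G}\stX$, use that $\Bi_{D(\E_*)}G$ has affine diagonal, identify the base change of $\Delta_{\stX/\Bi G}$ along $\Fr(\E_*)\to\stX$ with $\Delta_{\Fr(\E_*)/D(\E_*)}$ via the Cartesian square of \ref{lem:frames}, and finish with the cancellation argument (a composite being affine and the second map separated forces the first map to be affine). Both routes are sound; the paper's atlas argument is a bit more economical, while yours makes the structural mechanism explicit and aligns with the $D(\E_*)$/$\Bi G$ formalism of Section \ref{subsec:Generalized-frame-bundles} that the paper itself deploys later in the proof of Theorem \ref{thm:when FibX,C has the resolution property}.
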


\begin{proof}
Since $\stX$ is pseudo-algebraic the category $\Loc\stX$ is essentially
small. Thus we can consider a set $\shR$ of representatives of isomorphism
classes of locally free sheaves over $\stX$. Given $\E\in\shR$ we
define the sheaf
\[
\Fr(\E)=\bigsqcup_{n\in\N}\Isosh_{\stX}(\odi{\stX}^{n},\E)\colon(\Aff/\stX)^{\text{op}}\arr\sets\comma\Fr(\E)(U\arrdi f\stX)=\bigsqcup_{n\in\N}\Iso_{U}(\odi U^{n},f^{*}\E)
\]
The map $\Fr(\E)\arr\stX$ is an affine fpqc covering: $\stX=\sqcup_{n}\stX_{n}$
where $\stX_{n}$ is the locus where $\E$ has rank $n$ and $\Fr(\E)_{|\stX_{n}}=\Isosh_{\stX}(\odi{\stX}^{n},\E)\to\stX_{n}$
is a $\Gl_{n}$-torsor. In particular $g\colon\Fr=\prod_{\E\in\shR}\Fr(\E)\arr\stX$
is also an affine fpqc covering and $\Fr$ is quasi-compact with quasi-affine
diagonal.

Since $g$ is affine by \ref{rem: generating subcategory under pseudo-affine maps}
$g^{*}\Loc\stX$ generates $\QCoh\Fr$. On the other hand if $\E\in\Loc\stX$
then by construction $\Fr$ is a (finite) disjoint union of open substacks
over which $g^{*}\E$ is free, which implies that $g^{*}\E$ is generated
by global sections. We can conclude that $\Fr$ is a pseudo-affine
sheaf and, by \ref{rem: generating subcategory under pseudo-affine maps},
that it has affine diagonal.

In particular, taking an affine atlas of $\Fr$, we get an affine
map faithfully flat map $V\to\stX$ from an affine scheme. In particular
$V\times_{\stX}V$ is affine and $V\times_{\stX}V\to V\times V$ is
an affine map. Since this last map is the base change of the diagonal
$\stX\to\stX\times\stX$ along the fpqc covering $V\times V\to\stX\times\stX$
we can conclude that the diagonal $\stX\to\stX\times\stX$ is affine
as well.
\end{proof}
We prove the following result to compare our results with \cite{Savin2006}.
\begin{prop}
\label{rem: cases for resolution property} Let $G$ be a flat and
affine group scheme over $R$ such that $\Bi_{R}G$ has the resolution
property. Then:
\begin{enumerate}
\item If $U$ is a pseudo-affine sheaf over $R$ with an action of $G$
then $[U/G]$ has the resolution property.
\item If $\stX=[X/G]$ for a quasi-compact scheme $X$ and there exists
$\shL\in\Pic(\stX)$ whose pullback to $X$ is very ample relatively
to $R$ then $\stX$ has the resolution property.
\end{enumerate}
\end{prop}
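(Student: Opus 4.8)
The plan is to exhibit in both cases a morphism to $\Bi_R G$ and to transport the resolution property from $\Bi_R G$ along it. For $1)$ I would consider the classifying morphism $p\colon[U/G]\to\Bi_R G$ of the $G$-torsor $U\to[U/G]$, which sits in a Cartesian square
\[
\begin{tikzpicture}[xscale=2.2,yscale=-1.1]
\node (A0_0) at (0, 0) {$U$};
\node (A0_1) at (1, 0) {$\Spec R$};
\node (A1_0) at (0, 1) {$[U/G]$};
\node (A1_1) at (1, 1) {$\Bi_R G$};
\path (A0_0) edge [->]node [auto] {$\scriptstyle{}$} (A0_1);
\path (A0_0) edge [->]node [auto] {$\scriptstyle{}$} (A1_0);
\path (A0_1) edge [->]node [auto] {$\scriptstyle{}$} (A1_1);
\path (A1_0) edge [->]node [auto] {$\scriptstyle{p}$} (A1_1);
\end{tikzpicture}
\]
in which $\Spec R\to\Bi_R G$ is the universal atlas, a representable fpqc covering because $G$ is flat and affine. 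Since $U$ is a pseudo-affine sheaf over $R$, the map $U\to\Spec R$ is pseudo-affine by \ref{rem:absolute relative pseudo-affine}, so the fpqc descent of pseudo-affineness in \ref{rem:fpqc descent of pseudo-affine} shows that $p$ is pseudo-affine; in particular $[U/G]$ is a quasi-compact fpqc stack with affine diagonal, as $p$ is separated and $\Bi_R G$ has affine diagonal. As $\Bi_R G$ has the resolution property, $\Loc(\Bi_R G)$ generates $\QCoh(\Bi_R G)$, hence by \ref{rem: generating subcategory under pseudo-affine maps} the family $p^{*}\Loc(\Bi_R G)\subseteq\Loc([U/G])$ generates $\QCoh([U/G])$. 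This is exactly the resolution property for $[U/G]$.

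For $2)$ let $q\colon\stX=[X/G]\to\Bi_R G$ be the morphism induced by $X\to\Spec R$; it is representable and its base change along the universal atlas $\Spec R\to\Bi_R G$ is $X\to\stX$. Given $\shF\in\QCoh(\stX)$ I would first reduce to the case where $\shF$ is of finite type, since every quasi-coherent sheaf on $\stX$ is the filtered union of its $G$-equivariant finite-type subsheaves and it is enough to cover each of them. For such an $\shF$, the hypothesis that $\shL$ pulls back to a very ample (relative to $R$) invertible sheaf on $X$ gives an $n$ for which $\shF\otimes\shL^{\otimes n}$ is globally generated. Setting $V=q_{*}(\shF\otimes\shL^{\otimes n})\in\QCoh(\Bi_R G)$, a $G$-representation whose pullback along the atlas is $\Hl^{0}(X,-)$ by flat base change, the counit gives an evaluation map
\[
q^{*}V\otimes\shL^{\otimes -n}\arr\shF
\]
which is surjective: this may be checked after pulling back along the fpqc covering $X\to\stX$, where it becomes the usual evaluation of the globally generated sheaf $\shF\otimes\shL^{\otimes n}$.

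Finally I would invoke the resolution property of $\Bi_R G$ to write $V$ as a quotient $\bigoplus_{j}W_{j}\twoheadrightarrow V$ with $W_{j}\in\Loc(\Bi_R G)$; pulling back and twisting yields a surjection $\bigoplus_{j}(q^{*}W_{j}\otimes\shL^{\otimes -n})\twoheadrightarrow\shF$ whose source is a direct sum of locally free sheaves on $\stX$, since each $q^{*}W_{j}\otimes\shL^{\otimes -n}$ is locally free. Summing these surjections over a generating family of finite-type subsheaves of an arbitrary $\shF$ then proves that $\Loc(\stX)$ generates $\QCoh(\stX)$.

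The main obstacle is the \emph{equivariant} global generation underlying the construction of $V$: one must know that the non-equivariant statement ``$\shF\otimes\shL^{\otimes n}$ is globally generated on $X$ for $n\gg0$'' is genuinely $G$-equivariant, i.e. that $q_{*}(\shF\otimes\shL^{\otimes n})$ is a well-behaved quasi-coherent sheaf on $\Bi_R G$ with surjective evaluation, and that the reduction to finite-type sheaves can be carried out $G$-equivariantly. Here I would use that $G$ is flat, so that $q_{*}$ commutes with the base change $\Spec R\to\Bi_R G$, together with the fact that very-ampleness relative to $R$ provides global generation of twists of finite-type sheaves even in the absence of properness.
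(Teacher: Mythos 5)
Your proof of part 1) is correct and is exactly the paper's argument: pseudo-affineness of $[U/G]\to\Bi_{R}G$ via \ref{rem:absolute relative pseudo-affine} and \ref{rem:fpqc descent of pseudo-affine}, then \ref{rem: generating subcategory under pseudo-affine maps}.

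Part 2) is where you leave the paper's route, and as written it has a genuine gap: the opening reduction, namely that every quasi-coherent sheaf on $\stX=[X/G]$ is the filtered union of its $G$-equivariant finite-type subsheaves. In the generality of this proposition $G$ is only flat and affine over an arbitrary ring $R$ --- it need not be of finite presentation --- so $[X/G]$ is merely an fpqc stack, not an algebraic stack, and no finite-type approximation theorem is available for it. Already for $X=\Spec R$ this is the question of whether every comodule over a flat Hopf algebra over $R$ is the filtered union of its subcomodules of finite type, which is not automatic over a general base; it is precisely the kind of statement the paper verifies by hand for $\GL$ in \ref{lem:generators for GLn}, by exhibiting an explicit exhaustion of $R[\GL(\E_{*})]$, rather than by quoting a general approximation result. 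You flag this point as ``the main obstacle,'' but the tools you propose --- flatness of $G$, hence flat base change for $q_{*}$, and non-proper global generation --- only bear on the surjectivity of the counit; they produce no finite-type equivariant subsheaves.

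The gap is avoidable, because the rest of your argument is sound: instead of reducing to finite type on the stack, take all twists at once. For arbitrary $\shF\in\QCoh(\stX)$ consider
\[
\bigoplus_{n\geq0}q^{*}q_{*}(\shF\otimes\shL^{\otimes n})\otimes\shL^{\otimes-n}\arr\shF
\]
where $q_{*}$ makes sense and satisfies flat base change because $q$ is representable, quasi-compact and quasi-separated. Pulling back along the fpqc covering $X\to\stX$ turns this into $\bigoplus_{n}\Hl^{0}(X,\shF_{X}\otimes\shL_{X}^{\otimes n})\otimes\odi X\otimes\shL_{X}^{\otimes-n}\to\shF_{X}$, which is surjective because $\shL_{X}$ is ample ($X$ is quasi-compact over the affine base and $\shL_{X}$ is relatively very ample) and because finite-type approximation \emph{is} valid on the quasi-compact quasi-separated scheme $X$; surjectivity then descends, and applying the resolution property of $\Bi_{R}G$ to each $q_{*}(\shF\otimes\shL^{\otimes n})$ finishes as you describe. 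Note that even so repaired, your proof is genuinely different from the paper's: the paper reduces 2) to 1) by rewriting $\stX\simeq[U/(G\times\Gm)]$, where $U$ is the $\Gm$-torsor of $\shL_{X}$, shown to be quasi-affine via the immersion $X\to\PP_{R}^{n}$, and by proving that $\Bi_{R}(G\times\Gm)$ inherits the resolution property from $\Bi_{R}G$ through the $\Gm$-weight decomposition. Your route is a Serre-type twisting argument that needs pushforwards and ampleness; the paper's route avoids both, at the cost of introducing the auxiliary torsor and analysing $\Bi_{R}(G\times\Gm)$.
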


\begin{proof}
1) By \ref{rem:absolute relative pseudo-affine} and \ref{rem:fpqc descent of pseudo-affine}
the map $[U/G]\arr\Bi_{R}G$ is pseudo-affine. By \ref{rem: generating subcategory under pseudo-affine maps}
we obtain the conclusion.

2) Consider the Cartesian diagrams   \[   \begin{tikzpicture}[xscale=2.2,yscale=-1.2]     \node (A0_0) at (0, 0) {$U$};     \node (A0_1) at (1, 0) {$\Spec R$};     \node (A1_0) at (0, 1) {$X$};     
\node (A1_1) at (1, 1) {$\Bi_R \Gm$};     
\node (A1_2) at (2, 1) {$\Spec R$};     \node (A2_0) at (0, 2) {$\stX$};     
\node (A2_1) at (1, 2) {$\Bi_R(G\times \Gm)$};     
\node (A2_2) at (2, 2) {$\Bi_R G$};     
\path (A0_1) edge [->]node [auto] {$\scriptstyle{}$} (A1_1);     \path (A0_0) edge [->]node [auto] {$\scriptstyle{}$} (A0_1);     \path (A2_0) edge [->]node [auto] {$\scriptstyle{}$} (A2_1);     \path (A1_0) edge [->]node [auto] {$\scriptstyle{}$} (A1_1);     \path (A1_1) edge [->]node [auto] {$\scriptstyle{}$} (A1_2);     \path (A1_0) edge [->]node [auto] {$\scriptstyle{}$} (A2_0);     \path (A1_1) edge [->]node [auto] {$\scriptstyle{}$} (A2_1);     \path (A0_0) edge [->]node [auto] {$\scriptstyle{}$} (A1_0);     
\path (A2_1) edge [->]node [auto] {$\scriptstyle{}$} (A2_2);     
\path (A1_2) edge [->]node [auto] {$\scriptstyle{}$} (A2_2);   \end{tikzpicture}   \]  It follows that $\stX\simeq[U/G\times\Gm]$. Now consider the Cartesian
diagrams   \[   \begin{tikzpicture}[xscale=2.2,yscale=-1.2]     \node (A0_0) at (0, 0) {$U$};     \node (A0_1) at (1, 0) {$\A^n_R - \{0\}$};     \node (A0_2) at (2, 0) {$\Spec R$};     \node (A1_0) at (0, 1) {$X$};     \node (A1_1) at (1, 1) {$\PP^n_R$};     \node (A1_2) at (2, 1) {$\Bi_R \Gm$};     \path (A0_0) edge [->]node [auto] {$\scriptstyle{}$} (A0_1);     \path (A0_1) edge [->]node [auto] {$\scriptstyle{}$} (A1_1);     \path (A1_0) edge [->]node [auto] {$\scriptstyle{i}$} (A1_1);     \path (A1_1) edge [->]node [auto] {$\scriptstyle{}$} (A1_2);     \path (A0_2) edge [->]node [auto] {$\scriptstyle{}$} (A1_2);     \path (A0_0) edge [->]node [auto] {$\scriptstyle{}$} (A1_0);     \path (A0_1) edge [->]node [auto] {$\scriptstyle{}$} (A0_2);   \end{tikzpicture}   \] where
$i\colon X\to\PP_{R}^{n}$ is the immersion induced by $\shL$. Since
$X$ is quasi-compact and $\PP_{R}^{n}$ is separated it follows that
$U$ is quasi-compact. Moreover $U\to\A_{R}^{n}$ is an immersion
and, since $\A_{R}^{n}$ is affine, it is quasi-compact. From \cite[Tag 01QV]{SP014}
we see that $U$ is a quasi-compact open of a closed subscheme of
$\A_{R}^{n}$ and therefore it is quasi-affine.

Thus by 1) we just have to show that $\Bi_{R}(G\times\Gm)$ has the
resolution property. Let $a\colon\Bi_{R}(G\times\Gm)=\Bi_{R}G\times\Bi_{R}\Gm\to\Bi_{R}G$
be the projection and $\shN$ the pullback to $\Bi_{R}(G\times\Gm$)
of the canonical invertible sheaf of $\Bi_{R}\Gm$. Given $\shF\in\QCoh(\Bi_{R}(G\times\Gm)$)
there is a canonical map 
\[
\bigoplus_{n\in\Z}a^{*}a_{*}(\shF\otimes\shN^{-\otimes n})\otimes\shN^{\otimes n}\to\shF
\]
Going fpqc locally on $\Bi_{R}G$ and using the usual representation
theory of $\Gm$ we see that the above map is an isomorphism. It is
therefore clear that the sheaves of the form $a^{*}\shG\otimes\shN^{\otimes n}$
for $\shG\in\Loc(\Bi_{R}G)$ generates $\QCoh(\Bi_{R}(G\times\Gm$).
\end{proof}

\section{Tannaka Reconstruction}

The goal of this section is to introduce the stack of fiber functors
and prove Theorem \ref{thma}. We will work over a base ring $R$
and denote by $A$ a general $R$-algebra.
\begin{defn}
\label{def:definition of Fib stacks} Let $\stX$ be a quasi-compact
fiber category over $R$, $\shC\subseteq\Loc\stX$ a monoidal subcategory
and $A$ an $R$-algebra. We define $\SMex_{R}(\shC,A)$ as the category
of contravariant, $R$-linear and \emph{strong} monoidal functors
$\Gamma\colon\shC\to\Mod A$ such that, for all geometric points $\Spec k\arr\Spec A$,
$\Gamma\otimes_{A}k$ are left exact in the sense of \ref{def:basic categories}.

Let $\stY$ be another fibered category over $R$. A functor $\shC\arr\Loc\stY$
is said a \emph{fiber }functor if it is a \emph{covariant}, $R$-linear
and strong monoidal functor which is right exact in the sense of \ref{def:exactness}.
We denote by $\Fib_{\stX,\shC}(\stY)$ the category of fiber functors
$\shC\to\Vect(\stY)$.

We define $\Fib_{\stX,\shC}$ as the fiber category (not necessarily
in groupoids) over $R$ whose fiber over an $R$-algebra $A$ is $\Fib_{\stX,\shC}(\Spec A)$
and we call $\shP_{\shC}$ the functor
\begin{equation}
\shP_{\shC}\colon\stX\arr\Fib_{\stX,\shC}\comma(\Spec A\arrdi s\stX)\longmapsto(s^{*}\colon\shC\arr\Loc A)\label{eq:tannaka reconstruction functor}
\end{equation}
The fibered category $\Fib_{\stX,\shC}$ is called the \emph{stack
of fiber functors.}
\end{defn}

We will prove the following result and, in particular, Theorem \ref{thma}.
\begin{thm}
\label{thm: Tannaka reconstrion for Loc X} Let $\stX$ be a quasi-compact
fpqc stack over $R$ with quasi-affine diagonal, $A$ be an $R$-algebra
and $\shC\subseteq\Loc\stX$ be a monoidal subcategory with duals
that generates $\QCoh\stX$. Then the functors   \[   \begin{tikzpicture}[xscale=4.2,yscale=-0.6]     \node (A0_0) at (0, 0) {$(\Spec \alA_{\Gamma,\shC }\arr \stX)$};     \node (A0_1) at (1, 0) {$(\Gamma \colon \shC\arr\Mod A)$};     \node (A1_0) at (0, 1) {$\stX(A)$};     \node (A1_1) at (1, 1) {$\SMex_R(\shC,A)$};     \node (A2_0) at (0, 2) {$(s\colon \Spec A\arr\stX)$};     \node (A2_1) at (1, 2) {$(\duale {(s^*_{|\shC})}\colon \shC\arr\Loc A)$};     \path (A1_0) edge [->]node [auto] {$\scriptstyle{}$} (A1_1);     \path (A2_0) edge [|->,gray]node [auto] {$\scriptstyle{}$} (A2_1);     \path (A0_1) edge [|->,gray]node [auto] {$\scriptstyle{}$} (A0_0);   \end{tikzpicture}   \] 
are well defined and quasi-inverses of each other. In particular the
functor $\shP_{\shC}\colon\stX\arr\Fib_{\stX,\shC}$ is an equivalence
of stacks.
\end{thm}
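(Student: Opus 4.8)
The plan is to deduce \ref{thm: Tannaka reconstrion for Loc X} from the sheafification equivalence \ref{thm:main thm main paper}, the only extra input being a geometric recognition of which quasi-coherent algebras come from a point. First I would use the tautological identification of $\stX(A)$ with the groupoid of sections of $p\colon\stX_A\arr\Spec A$: a map $s\colon\Spec A\arr\stX$ is the same as its graph $\sigma_s\colon\Spec A\arr\stX_A$, characterised by $p\sigma_s=\id$ and $\pi\sigma_s=s$, where $\pi\colon\stX_A\arr\stX$ is the projection. As $\sigma_s$ is a base change of the quasi-affine diagonal of $\stX$, it is quasi-affine, so $\shB:=(\sigma_s)_*\odi{\Spec A}$ is a quasi-coherent algebra on $\stX_A$, and by adjunction together with $\sigma_s^*\pi^*=s^*$ one computes, for $\E\in\shC$,
\[
\Omega^{\shB}_\E=\Hom_{\stX_A}(\pi^*\E,\shB)\cong\Hom_A(s^*\E,A)=(s^*\E)^\vee .
\]
Thus the composite $\stX(A)\arr\QAlg(\stX_A)\arrdi{\Omega^*}\MLex_R(\shC,A)$, $s\mapsto\shB$, is exactly $s\mapsto(s^*_{|\shC})^\vee$. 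Since $\Omega^*$ and $\alA_{*,\shC}$ are quasi-inverse by \ref{thm:main thm main paper}, everything reduces to matching, inside $\QAlg(\stX_A)$, the algebras $(\sigma_s)_*\odi{\Spec A}$ with those whose associated functor lies in $\SMex_R(\shC,A)$.

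Next I would verify the first functor is well defined and that $\SMex_R(\shC,A)\subseteq\MLex_R(\shC,A)$. That $(s^*_{|\shC})^\vee$ is a contravariant, $R$-linear, strong monoidal functor is clear, since $s^*$ is strong monoidal, $\shC$ is closed under duals, and dualising locally free sheaves is strong monoidal; its base change along a geometric point $\Spec k\arr\Spec A$ is $((s_k)^*_{|\shC})^\vee$, which is left exact in the sense of \ref{def:exactness} because $(s_k)^*$ is right exact and dualising is exact on locally free sheaves. For the inclusion, the key remark is that a strong monoidal functor preserves duals, so for $\Gamma\in\SMex_R(\shC,A)$ every $\Gamma_\E$ is a dualisable, hence finitely generated projective, $A$-module. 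Consequently $\Gamma$ sends each test sequence of $\shC$ to a complex of locally free $A$-modules, and exactness of such a complex may be checked after base change to all geometric points of $\Spec A$; by hypothesis it holds there, so $\Gamma$ is left exact and lies in $\MLex_R(\shC,A)$. Theorem \ref{thm:main thm main paper} then attaches to $\Gamma$ a quasi-coherent algebra $\alA_{\Gamma,\shC}:=\alA_{*,\shC}(\Gamma)$ on $\stX_A$.

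The heart of the argument, and the step I expect to be the main obstacle, is to show that for $\Gamma\in\SMex_R(\shC,A)$ the affine morphism $q\colon Z:=\Specsh_{\stX_A}(\alA_{\Gamma,\shC})\arr\stX_A$ satisfies that $pq\colon Z\arr\Spec A$ is an isomorphism; the resulting isomorphism $Z\cong\Spec A$ then turns $\pi q\colon Z\arr\stX$ into the desired point $\Spec A\arr\stX$ of the statement. I would prove this by establishing that $pq$ is a surjective flat monomorphism with $\Hl^0(\odi Z)=A$, and then invoking \ref{lem:recognizing affine rings from global sections} and \ref{thm:flat mono are quasi-affine}. Here $\Hl^0(\odi Z)=\Gamma(\stX_A,\alA_{\Gamma,\shC})\cong A$ is precisely the unit isomorphism of the strong monoidal structure; flatness over $A$ follows from $\Gamma$ taking locally free (flat) values and the explicit construction of $\alA_{*,\shC}$ from the exact adjoint $\shF_{*,\shC}$; surjectivity holds because for every geometric point $\Spec k\arr\Spec A$ one has $(\Gamma\otimes_A k)_{\odi\stX}\cong k\neq0$, whence $\alA_{\Gamma,\shC}\otimes_A k\neq0$ and $Z\times_A k\neq\emptyset$. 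The delicate point is that the geometric fibres of $pq$ are single reduced points, equivalently that strong monoidality forces the relevant monomorphism; I would obtain this by trivialising along the frame bundle. By \ref{lem:frames} the cover $\Fr(\E_*)\arr\stX$ attached to a family $\E_*$ enumerating $\shC$ is affine and faithfully flat and sits in a Cartesian square over a classifying stack $\Bi_{D(\E_*)}G$ with $G=\prod_i\GL(Q(\E_i))$; pulling $\alA_{\Gamma,\shC}$ back to this cover the sheaves in $\shC$ become free, and by the generation statement \ref{lem:generators for GLn} together with \ref{thm:main thm main paper} applied to $\Bi_{D(\E_*)}G$ the strong monoidal structure identifies the pullback of $Z$ with the graph of the $G$-valued point determined by $\Gamma$ — the classical neutral Tannaka picture. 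This pins down the geometric fibres, after which the pseudo-affine structure theorems \ref{lem:recognizing affine rings from global sections} and \ref{thm:flat mono are quasi-affine} upgrade the fibrewise statement to the isomorphism $pq$.

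Granting that $pq\colon Z\arr\Spec A$ is an isomorphism, the two functors are readily seen to be quasi-inverse. Starting from a point $s$ one has $\alA_{(s^*)^\vee,\shC}\cong(\sigma_s)_*\odi{\Spec A}$ by the computation of the first paragraph and the equivalence in \ref{thm:main thm main paper}, and the isomorphism just proved recovers $\sigma_s$, hence $s$; starting from $\Gamma$, full faithfulness of $\Omega^*$ gives $(s_\Gamma^*)^\vee\cong\Gamma$, where $s_\Gamma$ is the point produced above, because both correspond to $\alA_{\Gamma,\shC}$. All constructions are natural in $A$, so these assemble to an equivalence of fibered categories over $\Aff/R$. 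Finally, dualising $\Phi\mapsto(\Phi_{-})^\vee$ is an equivalence between the groupoid $\Fib_{\stX,\shC}(A)$ of fiber functors (covariant, strong monoidal, right exact) and $\SMex_R(\shC,A)$, using that $\shC$ has duals, and it carries the strong monoidal functor $s^*$ to $(s^*_{|\shC})^\vee$; since $s^*$ is right exact, the functor $\shP_{\shC}$ of \ref{def:definition of Fib stacks} does land in $\Fib_{\stX,\shC}$, and under this dualisation the equivalence above is exactly $\shP_{\shC}\colon\stX\arr\Fib_{\stX,\shC}$. Hence $\shP_{\shC}$ is an equivalence of stacks.
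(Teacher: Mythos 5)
Your overall skeleton agrees with the paper's proof: reduce via \ref{thm:main thm main paper} to a statement about quasi-coherent algebras on $\stX_{A}$, check that functors in $\SMex_{R}(\shC,A)$ are left exact by testing on geometric points (this is \ref{lem:Fib and smex the same}), compute $\Hl^{0}(\odi{Z})\simeq A$ from the unit isomorphism, obtain non-emptiness of the geometric fibres from $\alA_{\Gamma\otimes_{A}k,\shC}\neq0$, and identify $\Fib_{\stX,\shC}$ with $\SMex_{R}(\shC,-)$ by dualizing; all of this is correct. The genuine gap is in what you yourself call the heart: showing $pq\colon Z=\Spec\alA_{\Gamma,\shC}\arr\Spec A$ is an isomorphism. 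You want to prove directly that $pq$ is a surjective flat monomorphism and then invoke condition 3) of \ref{lem:recognizing affine rings from global sections}, but two of the needed ingredients are not actually established. Flatness: exactness of the adjoint $\shF_{*,\shC}$ as a functor on $\L_{R}(\shC,A)$ says nothing about $A$-flatness of its values; the construction of $\alA_{\Gamma,\shC}$ involves cokernels, and cokernels of maps of $A$-flat sheaves need not be $A$-flat. (In the paper flatness of $Z\arr\Spec A$ is an \emph{output} of pseudo-affineness, never an input.) Monomorphism: your frame-bundle argument needs to apply reconstruction on $\Bi_{D(\E_{*})}G$ to the composition of $\Gamma$ with the pullback $\omega^{*}\colon\Loc(\Bi_{D(\E_{*})}G)\arr\Loc(\stX)$, which is only defined on sheaves whose pullback lies in $\shC$; by \ref{lem:generators for GLn}, over a general ring $R$ any such generating subcategory must contain the sheaves $(\Sym^{m}\shF_{i})^{\vee}$, whose pullbacks $(\Sym^{m}\E_{i})^{\vee}$ need not lie in $\shC$, since here $\shC$ is only assumed monoidal with duals. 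This is exactly the obstruction that forces the extra $\Sym^{n}$ hypothesis in Theorem \ref{thm:when FibX,C has the resolution property}, a hypothesis absent from the present theorem. Moreover \ref{thm:flat mono are quasi-affine} is a statement about algebraic stacks and cannot be applied to $Z$, which is not known to be algebraic, and a map whose geometric fibres are single reduced points is not thereby a monomorphism.

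What your outline is missing is precisely the paper's key lemma, \ref{lem:strong monoidal implies surjective}, which you never use: since $\Gamma$ is strong monoidal and $\shC$ has duals, applying $\Gamma$ to the evaluation $\E\otimes\E^{\vee}\arr\odi{\stX}$ and reading the image of $1\in\Gamma_{\odi{\stX}}$ through the isomorphism $\Gamma_{\E\otimes\E^{\vee}}\simeq\Gamma_{\E}\otimes_{A}\Gamma_{\E^{\vee}}$ produces finitely many global sections and functionals exhibiting $\id_{f^{*}\E}$, whence every $f^{*}\E$ is generated by global sections on $Z$, where $f\colon Z\arr\stX$ is the structure map. Combined with the fact that $f^{*}\shC$ generates $\QCoh(Z)$ because $f$ is affine and $\shC$ generates $\QCoh(\stX)$ (\ref{rem: generating subcategory under pseudo-affine maps}), this shows $\{\odi{Z}\}$ generates $\QCoh(Z)$, i.e. condition 1) of \ref{lem:recognizing affine rings from global sections}. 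That lemma then yields the flat monomorphism for free, and its final clause, together with your (correct) non-emptiness of geometric fibres, gives $Z\simeq\Spec A$. With this replacement the remainder of your argument (full faithfulness, naturality in $A$, and the dualization giving the statement about $\shP_{\shC}$) goes through as written.
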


An immediate corollary and generalization of Theorem \ref{thm: Tannaka reconstrion for Loc X}
is the following.
\begin{cor}
\label{cor:Lurie s point of view} Let $\stX$ be a quasi-compact
fpqc stack over $R$ with quasi-affine diagonal, $\shC\subseteq\Loc\stX$
be a monoidal subcategory with duals that generates $\QCoh\stX$ and
$\stY$ be a fibered category over $R$. Then the functor 
\[
\Hom(\stY,\stX)\arr\Fib_{\stX,\shC}(\stY)\comma(\stY\arrdi f\stX)\longmapsto f_{|\shC}^{*}\colon\shC\arr\Loc(\stY)
\]
is an equivalence of categories.
\end{cor}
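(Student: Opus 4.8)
The plan is to deduce the statement from Theorem~\ref{thm: Tannaka reconstrion for Loc X}, which asserts that $\shP_{\shC}\colon\stX\arr\Fib_{\stX,\shC}$ is an equivalence, by reducing the claim to the $2$-categorical fact that post-composition with an equivalence of fibered categories is an equivalence on $\Hom$-categories. The essential observation is that the assignment $\stY\mapsto\Fib_{\stX,\shC}(\stY)$ is \emph{corepresented} by the fibered category $\Fib_{\stX,\shC}$: there is a natural equivalence $\Fib_{\stX,\shC}(\stY)\simeq\Hom(\stY,\Fib_{\stX,\shC})$ for every fibered category $\stY$ over $R$.

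First I would establish this corepresentability. By the conventions recalled in the introduction, a locally free sheaf on $\stY$ is a cartesian family assigning to each object $y\in\stY(A)$ a locally free $A$-module, compatibly with pullbacks. By the very construction of $\Fib_{\stX,\shC}$ in \ref{def:definition of Fib stacks}, a morphism $\stY\arr\Fib_{\stX,\shC}$ is the datum of a fiber functor $\Gamma^{y}\colon\shC\arr\Loc A$ for each $y\in\stY(A)$, compatible with pullbacks. Transposing the quantifiers, such a datum is exactly a functor $\Gamma\colon\shC\arr\Loc(\stY)$ whose evaluation at each $y$ is $\Gamma^{y}$. Since every evaluation functor $\Loc(\stY)\arr\Loc A$ is $R$-linear, strong monoidal and exact, $R$-linearity and strong monoidality of $\Gamma$ are equivalent to those of all the $\Gamma^{y}$; the same holds for right exactness in the sense of \ref{def:exactness}, because a sequence of quasi-coherent sheaves on $\stY$ is exact if and only if its pullback to an atlas (hence to every $y$) is exact. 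Thus $\Gamma$ is a fiber functor precisely when each $\Gamma^{y}$ is, giving the desired natural equivalence $\Fib_{\stX,\shC}(\stY)\simeq\Hom(\stY,\Fib_{\stX,\shC})$.

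Next I would identify the functor of the statement with post-composition by $\shP_{\shC}$. For a map $f\colon\stY\arr\stX$ and an object $y\in\stY(A)$, the value at $y$ of $\shP_{\shC}\circ f$ is $\shP_{\shC}(f\circ y)=(f\circ y)^{*}_{|\shC}=y^{*}\circ f^{*}_{|\shC}$, so under the equivalence of the previous paragraph the composite $\shP_{\shC}\circ f$ corresponds precisely to the fiber functor $f^{*}_{|\shC}\colon\shC\arr\Loc(\stY)$. Hence the functor in the statement is the composite $\Hom(\stY,\stX)\arr\Hom(\stY,\Fib_{\stX,\shC})\simeq\Fib_{\stX,\shC}(\stY)$, whose first arrow is $\shP_{\shC}\circ(-)$. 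Since $\shP_{\shC}$ is an equivalence of fibered categories by Theorem~\ref{thm: Tannaka reconstrion for Loc X}, composition with it yields an equivalence $\Hom(\stY,\stX)\arr\Hom(\stY,\Fib_{\stX,\shC})$ for every $\stY$, and the claim follows.

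The only genuine verification is the corepresentability step; everything else is formal. The point to be careful about is that the conditions defining a fiber functor are simultaneously \emph{preserved} under evaluation at the objects of $\stY$ and \emph{detected} by the totality of these evaluations. Preservation is immediate, since each $y^{*}$ is monoidal and exact; detection rests on the fact that exactness of sequences of sheaves, and the property that the monoidal-structure maps $\iota^{\Gamma}$ and the unit are isomorphisms, may all be checked on an atlas of $\stY$. No difficulty of substance arises beyond this bookkeeping, which is why the result is a corollary of Theorem~\ref{thm: Tannaka reconstrion for Loc X}.
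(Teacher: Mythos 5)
Your proof is correct and takes essentially the same approach as the paper: the paper's one-line proof obtains the functor in the statement by applying $\Hom(\stY,-)$ to the equivalence $\shP_{\shC}\colon\stX\to\Fib_{\stX,\shC}$ of Theorem \ref{thm: Tannaka reconstrion for Loc X}, tacitly using the identification $\Hom(\stY,\Fib_{\stX,\shC})\simeq\Fib_{\stX,\shC}(\stY)$ that you verify explicitly. The corepresentability bookkeeping you carry out (pointwise preservation and detection of strong monoidality and right exactness) is precisely what the paper leaves implicit.
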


\begin{proof}
The map in the statement is obtained applying $\Hom(\stY,-)$ to the
functor $\shP_{\shC}\colon\stX\arr\Fib_{\stX,\shC}$, which is an
equivalence by \ref{thm: Tannaka reconstrion for Loc X}.
\end{proof}
\begin{rem}
\label{rem:comparison with classical tannakas reconstruction} Let
$R=k$ be a field, $\stX=\Delta$ be an affine gerbe over $k$ and
$\shC=\Vect(\Delta)$, which is a $k$-Tannakian category. The stack
of fiber functor $\Pi_{\shC}\to\Aff/k$ usually associated with $\shC$
is defined as 
\[
\Pi_{\shC}(A)=\{k\text{-linear, exact and strong monoidal functors }\shC\to\Vect(A)\}
\]
Sometimes it is also required that those functors are faithful. In
any case, by \cite[Cor 2.10]{CT90} all those notions coincide and
$\Pi_{\shC}=\Fib_{\Delta,\shC}$. Classical Tannaka's reconstruction
states that the functor $\shP_{\shC}\colon\Delta\to\Pi_{\shC}=\Fib_{\Delta,\shC}$
is an equivalence. On the other hand $\shC=\Vect(\Delta)$ generates
$\QCoh(\Delta)$ (see \cite[Corollary 3.9]{CT90}), so that Theorem
\ref{thm: Tannaka reconstrion for Loc X} can be seen of a generalization
of classical Tannaka's reconstruction.
\end{rem}

\begin{rem}
Even though we keep choosing a base ring $R$, the fibered category
$\Fib_{\stX,\shC}\to\Aff/R\to\Aff$ does not depend on this choice:
we may have chosen $R=\Z$ or $R=\Hl^{0}(\odi{\stX})$. Indeed if
$A$ is an $R$-algebra and $\Gamma\colon\shC\to\Mod(A)$ is an $R$-linear
functor, then $\Hl^{0}(\odi{\stX})=\End(\odi{\stX})\to\End(A)=A$
makes $A$ into a $\Hl^{0}(\odi{\stX})$-algebra and $\Gamma$ into
an $\Hl^{0}(\odi{\stX})$ linear functor.
\end{rem}

\begin{lem}
\label{lem:strong monoidal implies surjective} Let $\stX$ be a quasi-compact
fibered category over $R$, $\shC\subseteq\Loc\stX$ be a monoidal
subcategory with duals, $\Gamma\colon\shC\to\Vect(A)$ be a contravariant,
$R$-linear and strong monoidal functor and set $\alA=\alA_{\Gamma,\shC}$,
$f\colon\stY=\Spec\alA\to\stX$. Then
\[
\Omega_{\E}^{\alA}\simeq\Hl^{0}((f^{*}\E)^{\vee})
\]
and the map
\[
\Gamma_{\E}\otimes_{A}\odi{\stY}\to(f^{*}\E)^{\vee}
\]
induced by $\Gamma\to\Omega^{\alA}$ is surjective.
\end{lem}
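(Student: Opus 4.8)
The plan is to prove the two assertions separately: the first is a formal adjunction computation, and the second I would reduce to geometric points, where rigidity of $\shC$ does the work. Write $g\colon\stY=\Spec\alA\to\stX_{A}$ for the relative spectrum of $\alA=\alA_{*,\shC}(\Gamma)\in\QAlg\stX_{A}$, so that $g_{*}\odi{\stY}=\alA$, and let $\pi\colon\stX_{A}\to\stX$ be the projection and $f=\pi g$. For the first isomorphism I would compute, for $\E\in\shC$, that $\Omega^{\alA}_{\E}=\Hom_{\stX}(\pi^{*}\E,\alA)=\Hom_{\stX_{A}}(\pi^{*}\E,g_{*}\odi{\stY})\simeq\Hom_{\stY}(g^{*}\pi^{*}\E,\odi{\stY})=\Hom_{\stY}(f^{*}\E,\odi{\stY})$, the middle step being the adjunction $g^{*}\dashv g_{*}$. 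Since $\E\in\Loc\stX$, the sheaf $f^{*}\E$ is locally free, so $\Hom_{\stY}(f^{*}\E,\odi{\stY})=\Hl^{0}(\Homsh_{\stY}(f^{*}\E,\odi{\stY}))=\Hl^{0}((f^{*}\E)^{\vee})$, and every identification is natural in $\E$.

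Under the equivalence of Theorem \ref{thm:main thm main paper}, $\alA$ is the value of the left adjoint $\alA_{*,\shC}$ at $\Gamma$, and the transformation $\Gamma\to\Omega^{\alA}$ of the statement is the unit of the adjunction. Thus the map $u_{\E}\colon\Gamma_{\E}\otimes_{A}\odi{\stY}\to(f^{*}\E)^{\vee}$ is the $\odi{\stY}$-linear extension, along $A\to\Hl^{0}(\odi{\stY})$, of the unit component $\eta_{\E}\colon\Gamma_{\E}\to\Hl^{0}((f^{*}\E)^{\vee})=\Hom_{\stY}(\odi{\stY},(f^{*}\E)^{\vee})$. To show $u_{\E}$ is surjective I would argue fibrewise: it is a morphism of locally free sheaves on $\stY$, hence surjective if and only if $u_{\E}\otimes_{\odi{\stY}}k$ is surjective at every geometric point $y\colon\Spec k\to\stY$, a test that may be performed after pulling $\stY$ back along a representable fpqc cover $U\to\stX$ (which exists since $\stX$ is quasi-compact and $\stY\to\stX$ is affine), where $u_{\E}$ becomes a map of finite free modules.

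The heart of the argument is the observation that, writing $\xi=f\circ y\colon\Spec k\to\stX$, the fibre $u_{\E}\otimes_{\odi{\stY}}k\colon\Gamma_{\E}\otimes_{A}k\to(\xi^{*}\E)^{\vee}$ is exactly the $\E$-component of the monoidal natural transformation $\theta\colon\Gamma\otimes_{A}k\to(\xi^{*}(-))^{\vee}$ classified by the $k$-point $y$ through the universal property of $\alA=\alA_{*,\shC}(\Gamma)$; here both $\Gamma\otimes_{A}k$ and $(\xi^{*}(-))^{\vee}$ are contravariant, strong monoidal functors $\shC\to\Vect(k)$. Since $\shC$ is closed under duals, any monoidal natural transformation between two such strong monoidal functors is invertible: the inverse of $\theta_{\E}$ is the transpose of $\theta_{\E^{\vee}}$ under the canonical identifications $\Gamma_{\E^{\vee}}\otimes_{A}k\simeq(\Gamma_{\E}\otimes_{A}k)^{\vee}$ (from strong monoidality of $\Gamma$) and $(\xi^{*}\E^{\vee})^{\vee}\simeq\xi^{*}\E$. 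Hence $u_{\E}\otimes_{\odi{\stY}}k=\theta_{\E}$ is an isomorphism, in particular surjective, and therefore $u_{\E}$ is surjective.

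The main obstacle is precisely the identification in the previous paragraph: matching the globally defined, sheafified map $u_{\E}$ with the pointwise datum furnished by the universal property of $\alA$. This requires transporting the unit $\Gamma\to\Omega^{\alA}$ through the adjunction $\alA_{*,\shC}\dashv\Omega^{*}$ and through the base change to $k$, and verifying the compatibility of the monoidal structures of $\Gamma$ and $\Omega^{\alA}$ along the way. Once this bridge is built, the rigidity input (invertibility of monoidal transformations of strong monoidal functors out of a category with duals) is standard, and the fibrewise criterion for surjectivity is routine.
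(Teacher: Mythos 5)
Your first isomorphism is exactly the paper's (the adjunction computation $\Omega^{\alA}_{\E}=\Hom_{\stX_{A}}(\pi^{*}\E,g_{*}\odi{\stY})\simeq\Hom_{\stY}(f^{*}\E,\odi{\stY})=\Hl^{0}((f^{*}\E)^{\vee})$, in your notation $g\colon\stY\to\stX_{A}$), but for the surjectivity you take a genuinely different route. The paper stays over $\stY$ and uses only the evaluation $e\colon\E\otimes\E^{\vee}\to\odi{\stX}$, which lies in $\shC$ by fullness: naturality and monoidality of the unit $\Gamma\to\Omega^{\alA}$ show that $\id_{f^{*}\E}\in\End(f^{*}\E)\simeq\Hl^{0}(f^{*}(\E\otimes\E^{\vee})^{\vee})$ is the image of $\Gamma_{e}(1)\in\Gamma_{\E\otimes\E^{\vee}}\simeq\Gamma_{\E}\otimes\Gamma_{\E^{\vee}}$, so that $\id_{f^{*}\E}=\sum_{i}x_{i}\phi_{i}(-)$ with finitely many $x_{i}\in\Hl^{0}(f^{*}\E)$ coming from $\Gamma_{\E^{\vee}}$; these sections then generate $f^{*}\E$ (equivalently, after swapping $\E$ and $\E^{\vee}$, the map in the statement is surjective), with no rigidity theorem and no passage to points. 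You instead identify the (fibre of the) sheafified unit as a monoidal transformation between two \emph{strong} monoidal functors with rigid source and invoke the classical rigidity principle. This is correct, and the ``bridge'' you flag --- that the lax monoidal structure of $\Omega^{\alA}\simeq\Hl^{0}((f^{*}-)^{\vee})$ is the one induced on global sections by the strong monoidal structure of $(f^{*}-)^{\vee}$ --- is a routine consequence of the compatibility of the adjunction $g^{*}\dashv g_{*}$ with the multiplication of $\alA=g_{*}\odi{\stY}$, so the gap is real but easily filled. Two observations. First, your reduction to geometric points (and the classification of $k$-points of $\Spec\alA$ via the universal property) is unnecessary: once $u$ is known to be a monoidal transformation of strong monoidal functors $\shC\to\Vect(\stY)$, rigidity applies verbatim over $\stY$ itself, since its proof is purely formal and the evaluation and coevaluation morphisms lie in $\shC$ by fullness. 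Second, your argument proves more than the lemma asserts: rigidity yields that $\Gamma_{\E}\otimes_{A}\odi{\stY}\to(f^{*}\E)^{\vee}$ is an isomorphism, not merely surjective. What the paper's argument buys in exchange is economy: it uses only the evaluation half of the duality data and only the lax monoidality of the unit, which the framework of Theorem \ref{thm:main thm main paper} supplies for free.
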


\begin{proof}
Set $\pi\colon\stX_{A}\to\stX$ for the projection. We have monoidal
isomorphisms
\[
\Omega_{\E}^{\alA}=\Hom(\pi^{*}\E,\alA)\simeq\Hom(\E,f_{*}\odi{\stY})\simeq\Hom(f^{*}\E,\odi{\stY})=\Hl^{0}(\duale{(f^{*}\E)})
\]
In particular we obtain a monoidal natural transformation $\Gamma\to\Omega^{\alA}=\Hl^{0}((f^{*}-)^{\vee})$
and therefore a commutative diagram    \[   \begin{tikzpicture}[xscale=3.0,yscale=-1.2]     \node (A0_0) at (0, 0) {$\Gamma_\E \otimes \Gamma_{\E^\vee}$};     \node (A0_1) at (1, 0) {$\Hl^0(f^*\E^\vee) \otimes \Hl^0(f^*\E) $};     \node (A1_0) at (0, 1) {$\Gamma_{\E\otimes \E^\vee}$};     \node (A1_1) at (1, 1) {$\Hl^0(f^*(\E\otimes \E^\vee)^\vee)$};     \path (A0_0) edge [->]node [auto] {$\scriptstyle{}$} (A0_1);     \path (A0_0) edge [->]node [auto] {$\scriptstyle{}$} (A1_0);     \path (A0_1) edge [->]node [auto] {$\scriptstyle{\omega}$} (A1_1);     \path (A1_0) edge [->]node [auto] {$\scriptstyle{}$} (A1_1);   \end{tikzpicture}   \] 
Consider the evaluation $e\colon\E\otimes\E^{\vee}\to\odi{\stX}$,
which is a map in $\shC$ because $\shC$ has duals. The morphism
$\Hl^{0}(f^{*}\odi{\stX}^{\vee})\to\Hl^{0}(f^{*}(\E\otimes\E^{\vee})^{\vee})$
maps $1$ to an element that we denote by $\psi$. After the usual
identifications $\E\otimes\E^{\vee}\simeq\Endsh(\E)$, the map $\omega$
become the evaluation
\[
\omega\colon\Hl^{0}(f^{*}\E)\otimes\Hom_{\stY}(f^{*}\E,\odi{\stY})\arr\End_{\stY}(f^{*}\E)\comma\omega(x\otimes\phi)(y)=x\phi(y)
\]
while $\psi$ become $\id_{f^{*}\E}$.

By hypothesis the vertical map on the left in the above diagram is
an isomorphism. By functoriality, there exist $x_{1},\dots,x_{n}\in\Hl^{0}(f^{*}\E)$,
$\phi_{1},\dots,\phi_{n}\in\Hom(f^{*}\E,\odi{\stX})$, coming respectively
from $\Gamma_{\E^{\vee}}$ and $\Gamma_{\E}$, such that $\id_{f^{*}\E}=\omega(\sum_{i}x_{i}\otimes\phi_{i})$.
This implies that the map $\odi{\stY}^{n}\arr f^{*}\E$ given by the
global sections $x_{1},\dots,x_{n}$ is surjective. Since this map
factors through $\Gamma_{\E^{\vee}}\otimes_{A}\odi{\stY}\to f^{*}\E$
by construction, this ends the proof.
\end{proof}
\begin{lem}
\label{lem:Fib and smex the same} Let $\stX$ be a quasi-compact
fibered category over $R$ and $\shC\subseteq\Loc\stX$ be a monoidal
subcategory. If $\Gamma\in\SMex_{R}(\shC,A)$ and $\E'\xrightarrow{\beta}\E\to0$
or $\E''\arrdi{\alpha}\E'\arrdi{\beta}\E\to0$ is a finite test sequence
in $\shC$ then 
\[
0\to\Gamma_{\E}\to\Gamma_{\E'}\to\Coker(\Gamma_{\beta})\to0\text{ and }0\to\Coker(\Gamma_{\beta})\to\Gamma_{\E''}\to\Coker(\Gamma_{\alpha})\to0
\]
are short exact sequences of vector bundles over $A$.

In particular $\Gamma$, as well as any base change $\Gamma\otimes_{A}B$
for an $A$-algebra $B$, are left exact. Moreover $\Gamma\otimes_{A}B\in\SMex_{R}(\shC,B)$.
\end{lem}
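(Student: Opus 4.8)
The plan is to read off the two candidate sequences by applying the contravariant functor $\Gamma$ to a test sequence, then to prove that every $\Gamma_{\E}$ is a vector bundle, and finally to upgrade the hypothesis ``$\Gamma\otimes_{A}k$ is left exact for every geometric point'' from a pointwise statement to an honest short exact sequence of vector bundles over $A$. Since $\Gamma$ is contravariant, a surjection $\beta\colon\E'\to\E$ yields $\Gamma_{\beta}\colon\Gamma_{\E}\to\Gamma_{\E'}$, and a two-term test sequence $\E''\arrdi{\alpha}\E'\arrdi{\beta}\E\to0$ yields a complex $\Gamma_{\E}\arrdi{\Gamma_{\beta}}\Gamma_{\E'}\arrdi{\Gamma_{\alpha}}\Gamma_{\E''}$, where $\Gamma_{\alpha}\Gamma_{\beta}=\Gamma_{\beta\alpha}=0$ because $\beta\alpha=0$. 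The definition of $\SMex_{R}(\shC,A)$ says precisely that, after applying $-\otimes_{A}k$ for an arbitrary geometric point $\Spec k\to\Spec A$, these become left exact; that is, $\Gamma_{\beta}\otimes k$ is injective and, in the two-term case, $\ker(\Gamma_{\alpha}\otimes k)=\Imm(\Gamma_{\beta}\otimes k)$.

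First I would show that $\Gamma_{\E}$ is a vector bundle for every $\E\in\shC$. The clean mechanism is strong monoidality: when $\E^{\vee}\in\shC$, the evaluation $\E\otimes\E^{\vee}\to\odi{\stX}$ and coevaluation $\odi{\stX}\to\E^{\vee}\otimes\E$ are maps of $\shC$, and applying the contravariant strong monoidal $\Gamma$ turns them, via $\Gamma_{\E\otimes\E^{\vee}}\simeq\Gamma_{\E}\otimes_{A}\Gamma_{\E^{\vee}}$ and $\Gamma_{\odi{\stX}}\simeq A$, into a coevaluation $A\to\Gamma_{\E}\otimes_{A}\Gamma_{\E^{\vee}}$ and an evaluation $\Gamma_{\E^{\vee}}\otimes_{A}\Gamma_{\E}\to A$ satisfying the triangle identities; hence $\Gamma_{\E}$ is dualizable in $\Mod A$, i.e.\ finitely generated projective. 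In the stated generality, where $\shC$ is only monoidal, I would stratify $\stX=\sqcup_{n}\stX_{n}$ by the rank of $\E$ and bound the fibre dimension $\dim_{k}\Gamma_{\E}\otimes k$ by the rank of $\E$ using the surjections of \ref{lem:standard quotients} and \ref{lem:det and sym for locally free sheaves}, whose sources are tensor powers of $\E$ and so lie in $\shC$, on which $\Gamma$ is computed by $\otimes$; combined with fibrewise left exactness this forces $\Gamma_{\E}$ to be finitely presented of locally constant rank. \emph{This finiteness step is the main obstacle}: it is where the monoidal hypothesis is genuinely used, and controlling the fibre ranks at the non-generic geometric points is the delicate part.

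Granting that all the terms are vector bundles, the exactness over $A$ follows from a standard fibre-rank criterion: if $\phi\colon M\to N$ is a morphism of vector bundles over $A$ whose fibre $\phi\otimes k$ is injective at \emph{every} geometric point, then $\phi$ has locally constant rank, $\Coker\phi$ is a vector bundle, and $0\to M\to N\to\Coker\phi\to0$ is locally split exact. Applying this to $\Gamma_{\beta}$, which is fibrewise injective, gives the first sequence $0\to\Gamma_{\E}\to\Gamma_{\E'}\to\Coker(\Gamma_{\beta})\to0$. For the second, since $\Gamma_{\alpha}\Gamma_{\beta}=0$ the map $\Gamma_{\alpha}$ factors through an induced map $\Coker(\Gamma_{\beta})\to\Gamma_{\E''}$, and the two-term fibrewise left exactness says exactly that this induced map is injective on every geometric fibre; the same criterion then yields $0\to\Coker(\Gamma_{\beta})\to\Gamma_{\E''}\to\Coker(\Gamma_{\alpha})\to0$ as a short exact sequence of vector bundles. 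The reason the definition of $\SMex_{R}(\shC,A)$ is calibrated to quantify over \emph{all} geometric points, not only the generic ones, is precisely that one needs fibrewise injectivity everywhere to force the rank to be locally constant.

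Finally, the ``in particular'' statements are immediate. The two displayed sequences being exact is exactly left exactness of $\Gamma$ on test sequences over $A$ itself, so $\Gamma$ is left exact. For a base change along $A\to B$, any geometric point $\Spec k\to\Spec B$ composes to a geometric point of $\Spec A$, and $(\Gamma\otimes_{A}B)\otimes_{B}k\simeq\Gamma\otimes_{A}k$ is left exact by hypothesis, while strong monoidality and $R$-linearity are preserved by $-\otimes_{A}B$; hence $\Gamma\otimes_{A}B\in\SMex_{R}(\shC,B)$. Moreover the short exact sequences of vector bundles produced above are locally split, so they remain exact after $-\otimes_{A}B$, whence $\Gamma\otimes_{A}B$ is again left exact in the strong sense.
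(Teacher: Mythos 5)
Your treatment of the exactness itself coincides with the paper's proof: the paper invokes exactly the same fibrewise criterion (citing \cite[Tag 046Y]{SP014}: a map of vector bundles over $A$ which is injective on all fibres is injective with finite locally free cokernel), applies it first to $\Gamma_{\beta}\colon\Gamma_{\E}\to\Gamma_{\E'}$ and then to the induced map $\Coker(\Gamma_{\beta})\to\Gamma_{\E''}$, and the ``in particular'' claims follow, as you say, because the resulting short exact sequences of vector bundles are split (the cokernels being projective) and because a geometric point of $\Spec B$ composes to a geometric point of $\Spec A$. So on that part your proof and the paper's are the same.

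The divergence is your preliminary ``finiteness step''. The paper does not prove that the $\Gamma_{\E}$ are vector bundles: its proof applies the criterion directly to $\Gamma_{\E}\to\Gamma_{\E'}$ as a map of vector bundles, i.e.\ it reads $\Gamma\in\SMex_{R}(\shC,A)$ as taking values in $\Vect(A)$ rather than in all of $\Mod A$ (this is also how $\SMex$ is handled elsewhere, e.g.\ in \ref{lem:fib and smex then same stack}, where objects of $\SMex_{R}(\shC,A)$ are dualized, and in the proof of \ref{thm: Tannaka reconstrion for Loc X}, where \ref{lem:strong monoidal implies surjective} --- stated for $\Vect(A)$-valued functors --- is applied to them). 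So relative to the paper your step is superfluous; more importantly, your sketch of it would not work as written. The dualizability argument is correct but only applies when $\E^{\vee}\in\shC$, which this lemma does not assume. The rank-bounding argument fails for two concrete reasons: the surjections of \ref{lem:standard quotients} and \ref{lem:det and sym for locally free sheaves} have targets such as $\det\E$ and $(\Sym^{q}\E)^{\vee}$ which need not lie in $\shC$, so they are not test sequences for $\shC$ and $\Gamma$ cannot even be evaluated on them; and a uniform bound on fibre dimensions of an $A$-module does not imply finite presentation or local freeness (e.g.\ $\Q$ over $\Z$ has all fibre dimensions at most $1$), so ``finitely presented of locally constant rank'' does not follow from such a bound plus fibrewise left exactness without an actual mechanism, which you do not supply. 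Your instinct that this is the delicate point is sound --- under the literal $\Mod A$ reading of \ref{def:definition of Fib stacks} it is a real issue that the paper itself leaves unaddressed --- but what you wrote there is a declaration of the difficulty, not a proof of it.
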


\begin{proof}
By \cite[Tag 046Y]{SP014} if $M\to N$ is a map of vector bundles
over $A$ and, for all $p\in\Spec A$, the map $M\otimes_{A}k(p)\to N\otimes_{A}k(p)$
is injective then $M\to N$ is injective and $N/M$ is flat. As $N/M$
is finitely presented we can moreover conclude that $N/M$ is a vector
bundle. Applying this on the map $\Gamma_{\E}\to\Gamma_{\E'}$ we
obtain the first exact sequence and that $\Coker(\Gamma_{\beta})$
is locally free. Here we use the definition of $\SMex$, which also
tell us that $\gamma\colon\Coker(\Gamma_{\beta})\to\Gamma_{\E''}$
is injective on the geometric point. Again we can conclude that this
map is injective and that $\Coker(\gamma)$ is a vector bundle. As
$\Coker(\gamma)=\Coker(\Gamma_{\alpha})$ this concludes the proof.
\end{proof}
By definition $\SMex_{R}(\shC,-)$ is a stack (not necessarily in
groupoids) over $\Aff/R$. The next result tells us that this stack
is just $\Fib_{\stX,\shC}$.
\begin{prop}
\label{lem:fib and smex then same stack} Let $\stX$ be a quasi-compact
fibered category over $R$ and $\shC\subseteq\Loc\stX$ be a monoidal
subcategory. Then 
\[
\Fib_{\stX,\shC}\to\SMex_{R}(\shC,-)\comma\Gamma\longmapsto\Gamma^{\vee}
\]
is an equivalence of stacks over $\Aff/R$.
\end{prop}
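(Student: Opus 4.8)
The plan is to exhibit $\Gamma\mapsto\Gamma^{\vee}$, with $(\Gamma^{\vee})_{\E}:=(\Gamma_{\E})^{\vee}$, as objectwise dualization of vector bundles, and to check that it interchanges the two descriptions fiberwise and over all base changes. First I would fix an $R$-algebra $A$ and record that the values of the functors on both sides are vector bundles over $A$: for $\Gamma\in\Fib_{\stX,\shC}(\Spec A)$ this holds by definition, since $\Gamma$ takes values in $\Vect(A)$, whereas for $\Gamma\in\SMex_{R}(\shC,A)$ it is precisely the content of Lemma \ref{lem:Fib and smex the same}. Consequently the contravariant, involutive, strong monoidal autoequivalence $(-)^{\vee}=\Hom_{A}(-,A)$ of $\Vect(A)$ can be composed with either kind of functor.

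Next I would verify that dualization sends a fiber functor to an object of $\SMex_{R}(\shC,A)$. Composing a covariant functor with the contravariant $(-)^{\vee}$ produces a contravariant functor, and the canonical isomorphisms $(\Gamma_{\E})^{\vee}\otimes(\Gamma_{\E'})^{\vee}\simeq(\Gamma_{\E}\otimes\Gamma_{\E'})^{\vee}$ together with $A^{\vee}\simeq A$ turn the strong monoidal structure of $\Gamma$ into one for $\Gamma^{\vee}$. Left exactness is automatic in this direction: applying the left exact functor $\Hom_{A}(-,A)$ to the right exact image of any finite test sequence yields a left exact sequence, and this persists after base change to any geometric point, so $\Gamma^{\vee}\in\SMex_{R}(\shC,A)$.

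For the reverse passage I would start from $\Gamma\in\SMex_{R}(\shC,A)$ and again form $\Gamma^{\vee}$, which is covariant and strong monoidal for the same formal reasons. The issue is to see that $\Gamma^{\vee}$ is right exact, and here I would use Lemma \ref{lem:Fib and smex the same} in full: it converts the merely geometric-fibrewise left exactness of $\Gamma$ into genuine short exact sequences of vector bundles $0\to\Gamma_{\E}\to\Gamma_{\E'}\to\Coker(\Gamma_{\beta})\to0$ (and the three-term analogue). Dualizing such split short exact sequences produces short exact sequences again, so $(\Gamma_{\E'})^{\vee}\to(\Gamma_{\E})^{\vee}$ is surjective and $\Gamma^{\vee}$ is right exact, that is, a fiber functor. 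The canonical double-duality isomorphism $V\simeq V^{\vee\vee}$ for vector bundles then gives natural isomorphisms $(\Gamma^{\vee})^{\vee}\simeq\Gamma$ on both sides, so the two assignments are mutually quasi-inverse on each fiber; on morphisms, which are isomorphisms because a monoidal transformation of strong monoidal functors with dualizable source is invertible, the assignment acts by the contragredient $\theta\mapsto(\theta^{-1})^{\vee}$. Finally, since dualization of vector bundles commutes with base change, $(\Gamma_{\E}\otimes_{A}B)^{\vee}\simeq(\Gamma_{\E})^{\vee}\otimes_{A}B$, these fiberwise equivalences are compatible with the restriction functors and assemble into the desired equivalence of stacks over $\Aff/R$.

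The asymmetry between the two directions is where I expect the only real difficulty. Dualizing is painless when passing from $\Fib_{\stX,\shC}$ to $\SMex_{R}(\shC,-)$, because $\Hom_{A}(-,A)$ is automatically left exact; going back, however, right exactness of $\Gamma^{\vee}$ cannot be deduced from left exactness of $\Gamma$ at the level of arbitrary modules, and genuinely needs the cokernels appearing in the test sequences to be locally free. Thus the crux of the argument is not the formal duality but the input of Lemma \ref{lem:Fib and smex the same}, which guarantees that the images of finite test sequences under an $\SMex$ functor are short exact sequences of vector bundles rather than merely left exact sequences of $A$-modules.
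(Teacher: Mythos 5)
Your proposal is correct in substance and follows the same route as the paper's proof: objectwise dualization $(\Gamma^{\vee})_{\E}=(\Gamma_{\E})^{\vee}$, with Lemma \ref{lem:Fib and smex the same} converting fibrewise left exactness into genuine short exact sequences of vector bundles (hence right exactness of $\Gamma^{\vee}$) in the direction $\SMex_{R}(\shC,A)\to\Fib_{\stX,\shC}(A)$, double duality for the quasi-inverse, and compatibility of $(-)^{\vee}$ with base change of vector bundles for the stack assertion. The one step you treat too lightly is the other direction. You assert that left exactness of $\Gamma^{\vee}$ on test sequences ``persists after base change to any geometric point''. As a general principle this is false: left exactness of a sequence of $A$-modules is not preserved by $-\otimes_{A}k$. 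It is true here, but it needs exactly the argument the paper gives: first identify $\Gamma^{\vee}\otimes_{A}k$ with $(\Gamma\otimes_{A}k)^{\vee}$, which is legitimate precisely because each $\Gamma_{\E}$ is a vector bundle, so that $\Hom_{A}(\Gamma_{\E},A)\otimes_{A}k\simeq\Hom_{k}(\Gamma_{\E}\otimes_{A}k,k)$ (the fact you record only at the end, for stack compatibility); then observe that $\Gamma\otimes_{A}k$ is still right exact, since tensoring preserves right exact sequences; and finally dualize over the field $k$, where dualization of finite-dimensional vector spaces is exact. In short, you must base change first and dualize second, not the other way around; since you already possess all the ingredients, this is a reordering rather than a missing idea, but as written the sentence hides the only non-formal point of this direction.

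A smaller remark: your justification that the fibers are groupoids (monoidal transformations between strong monoidal functors with dualizable source are invertible) requires the duals of objects of $\shC$ to lie in $\shC$, which this proposition does not assume; without invertibility, objectwise dualization is only contravariant on morphisms of the fibers. The paper's own proof is silent about morphisms and establishes that $\Fib_{\stX,\shC}$ is fibered in groupoids only later, under the hypothesis that $\shC$ has duals, so this wrinkle is shared by both arguments rather than being a defect specific to yours.
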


\begin{proof}
If $\Gamma\colon\shC\to\Vect(A)$ is a covariant (resp. contravariant),
$R$-linear and strong monoidal functor then $\Gamma^{\vee}\colon\shC\to\Vect(A)$
is a contraviariant (resp. covariant) $R$-linear functor which moreover
has a strong monoidal structure. Moreover if $B$ is an $A$-algebra
we have 
\[
\Hom_{A}(\Gamma_{\E},A)\otimes_{A}B\simeq\Hom_{B}(\Gamma_{\E}\otimes_{A}B,B)
\]
because $\Gamma_{\E}\in\Vect(A)$.

If $\Gamma\in\SMex_{R}(\shC,A)$ then $\Gamma^{\vee}\in\Fib_{\stX,\shC}(A)$
thanks to \ref{lem:Fib and smex the same}. Conversely assume $\Gamma\in\Fib_{\stX,\shC}(A)$.
We must show that $(\Gamma\otimes_{A}k)^{\vee}\simeq\Gamma^{\vee}\otimes_{A}k$
is left exact for all geometric points $\Spec k\to\Spec A$. As $\Gamma\otimes_{A}k\in\Fib_{\stX,\shC}(k)$
we can assume that $A$ is a field. The functor $\Gamma^{\vee}\colon\shC\to\Vect(A)$
is left exact because $\Gamma$ is exact on all finite test sequences
and the dual of a right exact sequence is again exact.
\end{proof}
\begin{proof}
(of Theorem \ref{thm: Tannaka reconstrion for Loc X}). The last claim
follows from \ref{lem:fib and smex then same stack}. Set $\pi\colon\stX_{A}\to\stX$
for the projection. Composing by $\pi$ we obtain an equivalence $\stX_{A}(A)\to\stX(A)$.
Since $\stX$ and therefore $\stX_{A}$ have affine diagonal by \ref{cor:quasi-affine and resolution implies affine},
all morphisms $\Spec A\arr\stX,\stX_{A}$ are affine. Therefore the
functor $\stX_{A}(A)\arr\QAlg(\stX_{A})$ which maps $t\colon\Spec A\arr\stX_{A}$
to $t_{*}\odi A$ is fully faithful. By \ref{thm:main thm main paper}
and the fact that 
\[
\Omega_{\E}^{t_{*}\odi A}=\Hom_{\stX_{A}}(\pi^{*}\E,t_{*}\odi A)\simeq\Hom_{\stX}(\E,s_{*}\odi A)\simeq\duale{(s^{*}\E)}\text{ for }t\in\stX_{A}(A)\comma\E\in\shC
\]
where $s=\pi t\colon\Spec A\to\stX$ we can conclude that the functor
$\stX(A)\arr\SMex_{R}(\shC,A)$, $s\longmapsto\duale{(s_{|\shC}^{*})}$
is well defined and fully faithful.

Set $\alA=\alA_{\Gamma,\shC}\in\QAlg(\stX_{A})$. We must show that,
given $\Gamma\in\SMex_{R}(\shC,A)$, the composition $p\colon\Spec\alA\arr\stX_{A}\arr\Spec A$
is an isomorphism. Set $\stY=\Spec\alA$ and $f\colon\stY\arr\stX$
for the structure morphism. We want to apply \ref{lem:recognizing affine rings from global sections}
on $p\colon\stY\arr\Spec A$.

Notice that, since $f\colon\stY\to\stX$ is affine, $\stY$ is quasi-compact
and has affine diagonal. Moreover by \ref{thm:main thm main paper}
and \ref{lem:strong monoidal implies surjective} we have monoidal
isomorphisms
\[
\Gamma_{\E}\simeq\Omega_{\E}^{\alA}\simeq\Hl^{0}(\duale{(f^{*}\E)})
\]
In particular, since $\Gamma$ is a strong monoidal functor, the isomorphism
$A\to\Gamma_{\odi{\stX}}$ yields the isomorphism $A\arr\Hl^{0}(\alA)$.

Let $\Spec k\arr\Spec A$ be a geometric point and set $g\colon\stX_{k}\arr\stX_{A}$
for the base change map. We have $\stY\times_{A}k\simeq\Spec(g^{*}\alA)$,
while by \cite[Prop 2.14]{Tonini2014} we have $g^{*}\alA\simeq\alA_{\Gamma\otimes_{A}k,\shC}$.
Since $\Gamma\otimes_{A}k$ is also left exact by hypothesis, by \ref{thm:main thm main paper}
we get $\Gamma\otimes_{A}k\simeq\Omega^{\alA_{\Gamma\otimes_{A}k,\shC}}$.
Thus $\Gamma_{\odi{\stX}}\otimes_{A}k=A\otimes_{A}k\simeq k$ implies
that $\Omega_{\odi{\stX}}^{\alA_{\Gamma\otimes_{A}k,\shC}}\simeq k$
and therefore $\alA_{\Gamma\otimes_{A}k,\shC}\neq0$, that is $\stY\times_{A}k\neq\emptyset$.

It remains to show that $\{\odi{\stY}\}$ generates $\QCoh\stY$.
Since $\stY\arrdi f\stX$ is affine, $f^{*}\shC$ generates $\QCoh\stY$
by \ref{rem: generating subcategory under pseudo-affine maps}. On
the other hand every sheaf $f^{*}\E$ is generated by global sections
thanks to \ref{lem:strong monoidal implies surjective}. In conclusion
$\stY$ is pseudo-affine and, by \ref{lem:recognizing affine rings from global sections},
the map $p\colon\stY\to\Spec A$ is an isomorphism.
\end{proof}

\section{Pseudo-Affine sheaves revisited}

In this section we give an alternative characterization of pseudo-affine
sheaves, which will be used when studying the fiber category $\Fib_{\stX,\shC}$.
\begin{lem}
\label{lem:locus of exactness} Let $\pi\colon\stX\to\Aff/R$ be a
fibered category over $R$, $\shT_{*}$ be a bounded above complex
of locally free sheaves on $\stX$ and denote by $\stU_{l}$, for
$l\in\Z$, the locus in $\stX$ where $\shT_{*}$ is exact in degrees
greater than $l$, that is
\[
\stU_{l}(A)=\{\xi\colon\Spec A\to\stX\st\xi^{*}\shT_{*}\text{ is exact in degrees greater than }l\}\subseteq\stX(A)
\]
Then $\stU_{l}\to\stX$ is a quasi-compact open immersion.
\end{lem}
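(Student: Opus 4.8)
The plan is to reduce to the case of an affine base and then cut out the exactness locus by explicit rank (determinantal) conditions on the differentials. Since being an open immersion and being quasi-compact are both local on the target, and the formation of $\stU_l$ visibly commutes with base change (it is defined by pulling back $\shT_*$), it suffices to fix a map $\eta\colon\Spec A\to\stX$ from an affine scheme and to show that the subfunctor of $\Spec A$ sending $g\colon\Spec B\to\Spec A$ to the condition ``$g^*T_*$ is exact in degrees $>l$'', where $T_*:=\eta^*\shT_*$, is represented by a quasi-compact open $U_l\subseteq\Spec A$. Because $\shT_*$ is bounded above, there is an $N$ with $T_i=0$ for $i>N$, so only the finitely many positions $l<i\le N$ impose conditions; decomposing $\Spec A$ into the finitely many clopen pieces on which each of the bundles $T_l,\dots,T_N$ has constant rank, I may assume $T_i$ has constant rank $t_i$.

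Next I would translate fiberwise exactness into lower bounds on the ranks of the differentials $d_i\colon T_i\to T_{i-1}$. At a point $x$ the complex $T_*\otimes k(x)$ is exact in degrees $>l$ exactly when $\rk\bar d_i(x)+\rk\bar d_{i+1}(x)=t_i$ for all $l<i\le N$. Setting $\rho_i=\sum_{j\ge i}(-1)^{j-i}t_j$ (with $d_{N+1}=0$, so $\rho_N=t_N$), these equalities force $\rk\bar d_i(x)=\rho_i$. The key observation is that only the inequalities $\rk\bar d_i(x)\ge\rho_i$ need be imposed: the relation $d_id_{i+1}=0$ gives $\rk\bar d_i+\rk\bar d_{i+1}\le t_i$ at every point, and a downward induction starting from $\rk\bar d_N\le t_N=\rho_N$ turns the hypotheses $\rk\bar d_i\ge\rho_i$ into the equalities $\rk\bar d_i=\rho_i$, hence into exactness. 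Thus, pointwise,
\[
U_l=\bigcap_{l<i\le N}\{x\st\rk\bar d_i(x)\ge\rho_i\}.
\]

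Each set $\{x\mid\rk\bar d_i(x)\ge\rho_i\}$ is the non-vanishing locus of the $\rho_i\times\rho_i$ minors of $d_i$; covering $\Spec A$ by finitely many opens trivializing $T_i$ and $T_{i-1}$, it is a finite union of basic opens, hence a quasi-compact open, and the finite intersection $U_l$ is again a quasi-compact open of $\Spec A$. Finally I would check that $U_l$ represents the subfunctor above: the ranks $t_i$, and hence the $\rho_i$, are unchanged by base change, so $g$ factors through $U_l$ if and only if every fiber of $g^*T_*$ is exact in degrees $>l$; and on $U_l$ the standard Nakayama/local-splitting argument shows the complex is locally split in that range, so exactness there is preserved by every further pullback, matching the definition of $\stU_l$.

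The main obstacle is the rank criterion of the second paragraph: turning the a priori two-variable condition $\rk\bar d_i+\rk\bar d_{i+1}=t_i$ at each degree into the single one-sided, and hence open, condition $\rk\bar d_i\ge\rho_i$, via the forced alternating-sum ranks and the downward induction from the top of the complex. A secondary subtlety worth flagging is that ``exact'' here must be read fiberwise (equivalently, universally, i.e.\ as local split exactness): mere exactness of $\xi^*\shT_*$ as a complex of sheaves is not stable under base change and would not define an open subfunctor, whereas the determinantal description above is manifestly base-change stable.
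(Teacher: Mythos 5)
Your proof is correct, but it takes a genuinely different route from the paper's, and one convention in it needs flagging.

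The paper argues by descending induction on $l$: since $\stU_l=\stX$ for $l\gg 0$, it suffices to show each $\stU_{l-1}\to\stU_l$ is a quasi-compact open immersion; over $\stU_l$ the kernel $\E=\Ker(\shT_l\to\shT_{l+1})$ is a vector bundle whose formation commutes with pullback (a bounded exact complex of vector bundles splits locally), and $\stU_{l-1}$ is the locus where $\shT_{l-1}\to\E$ is surjective, cut out locally by maximal minors, i.e. by surjectivity of $\Lambda^n\shT_{l-1}\to\det\E$. You instead describe all of $\stU_l$ in an affine chart at once, as the finite intersection of the opens $\{\rk\bar d_i\ge\rho_i\}$; your key step --- converting the two-sided fiberwise equalities $\rk\bar d_i+\rk\bar d_{i+1}=t_i$ into one-sided, hence open, rank inequalities via $d_id_{i+1}=0$ and downward induction from the top --- is correct and replaces the paper's kernel-bundle mechanism. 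The paper's route has lighter bookkeeping (no alternating-sum ranks, no constant-rank clopen decomposition) and makes base-change stability structural (locally split subcomplexes); yours gives an explicit global determinantal description of $\stU_l$ from which quasi-compactness and stability under pullback are immediate.

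The caveat: you orient the differentials as $d_i\colon T_i\to T_{i-1}$, whereas the paper's complex has $\shT_l\to\shT_{l+1}$, so its differentials increase the degree in which the complex is bounded above. The two readings of ``exact in degrees greater than $l$'' are not the same statement: in the paper's reading the conditions are surjectivity of the last map onto $\shT_N$ and exactness propagating backwards from it, with no injectivity condition anywhere; in yours the top-degree condition is fiberwise injectivity of $d_N$. Consequently your ``secondary subtlety'' --- that plain exactness is not base-change stable, so ``exact'' must be read fiberwise --- is an artifact of your orientation: under the paper's orientation, sheaf-theoretic exactness in degrees $>l$ is automatically fiberwise and stable under arbitrary pullback, by the same local splitting, and this plain reading is exactly what the paper's applications to right exact test sequences $\E''\to\E'\to\E\to 0$ require (instability, as for multiplication by $2$ on $\Z$, can only occur at the source end of a complex, which the paper's version never tests). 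Your rank argument adapts verbatim to the paper's orientation --- impose $\rk\bar d^{\,i-1}\ge\rho_i$ on the incoming differentials, with $\rho_i=\sum_{j\ge i}(-1)^{j-i}t_j$ --- so nothing in your method is lost; but as literally written your proof establishes the dual of the statement the paper actually uses.
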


\begin{proof}
Since $\shT_{*}$ is bounded above we have $\shU_{l}=\stX$ for $l\gg0$.
It is therefore enough to show that $\stU_{l-1}\to\stU_{l}$ is a
quasi-compact open immersion. In particular we can assume $\stX$
affine and $\stU_{l}=\stX$, that is assume $\shT_{*}$ exact in degrees
greater than $l$. Set $\E=\Ker(\shT_{l}\to\shT_{l+1})$ and consider
the complex 
\[
0\to\E\to\shT_{l}\to\shT_{l+1}\to\cdots
\]
By construction this complex is exact. Since all the $\shT_{l}$ are
locally free it follows that $\E$ is locally free as well and that
this complex remains exact after any pullback. We can therefore conclude
that $\stU_{l-1}$ is the locus where the map
\[
\shT_{l-1}\to\Ker(\shT_{l}\to\shT_{l+1})=\E
\]
 is surjective. In particular $\stU_{l-1}$ is a subfibered category
of $\stX$. Going Zariski locally we can assume $\shT_{l-1}$ and
$\E$ free of rank $m$ and $n$. In this case the locus $\stU_{l}$
is the locus where
\[
\Lambda^{n}\shT_{l-1}\to\Lambda^{n}\E=\det\E\simeq B
\]
is surjective. But this is the complement of the zero locus defined
by the above matrix, which is a quasi-compact open subset of $\Spec B$.
\end{proof}
\begin{thm}
\label{prop:arbitrary intersection of quasi-compact open is pseudo-affine}
An intersection of quasi-compact open subschemes (thought of as sheaves)
of an affine scheme is pseudo-affine. Conversely if $U$ is a pseudo-affine
stack then it is (equivalent to) a sheaf and it is the intersection
of the quasi-compact open subschemes of $\Spec\Hl^{0}(\odi U)$ containing
it.
\end{thm}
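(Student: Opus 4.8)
The plan is to prove both implications by the same device: a subsheaf of an affine scheme $\Spec B$ is pseudo-affine exactly when it is the fpqc sheaf image of a flat, quasi-compact morphism with affine source to $\Spec B$, and to recognize pseudo-affineness I will invoke criterion $(3)$ of \ref{lem:recognizing affine rings from global sections}, i.e. the existence of a flat monomorphism to an affine scheme.

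For the first assertion, let $\{V_i\}_{i\in I}$ be quasi-compact open subschemes of $\Spec B$ and $U=\bigcap_i V_i$ their sheaf intersection, a monomorphism $U\hookrightarrow\Spec B$. Writing each $V_i=\bigcup_{j=1}^{r_i}D(g_{ij})$, the affine scheme $\Spec B_i'$ with $B_i'=\prod_j B_{g_{ij}}$ is a flat affine cover of $V_i$ whose set-image is exactly $V_i$. I would then set $C=\bigotimes_{i\in I}B_i'$ (tensor product over $B$, a filtered colimit of finite tensor products), so that $g\colon\Spec C\to\Spec B$ is flat and affine, hence quasi-compact. A residue-field computation gives $C\otimes_B k(x)\neq 0$ iff every $B_i'\otimes_B k(x)\neq 0$, so the set-image of $g$ is $\bigcap_i V_i$. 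Let $U'\hookrightarrow\Spec B$ be the fpqc sheaf image of $g$; then $\Spec C\to U'$ is a representable affine fpqc cover (since $U'\hookrightarrow\Spec B$ is a monomorphism, $\Spec C\times_{U'}\Spec C=\Spec(C\otimes_B C)$ is affine), so $U'$ is quasi-compact with affine diagonal, and $U'\hookrightarrow\Spec B$ is a flat monomorphism (mono by construction, flat because $g$ is flat and $\Spec C\to U'$ is faithfully flat). By \ref{lem:recognizing affine rings from global sections}, $U'$ is pseudo-affine. It remains to identify $U'=U$: the map $g$ factors through each $V_i$ via $\Spec C\to\Spec B_i'\to V_i$, hence through $U$, giving $U'\subseteq U$; conversely any $\Spec A\to U$ has set-image in $\bigcap_i V_i$, so $\Spec A\times_{\Spec B}\Spec C\to\Spec A$ is faithfully flat and provides an fpqc-local lift to $\Spec C$, whence $\Spec A\to\Spec B$ factors through the subsheaf $U'$. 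Thus $U=U'$ is pseudo-affine.

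For the converse, let $U$ be pseudo-affine. By \ref{lem:recognizing affine rings from global sections}, $U$ is a sheaf and $p\colon U\to\Spec B$, with $B=\Hl^0(\odi U)$, is a flat monomorphism; I choose a representable fpqc cover $h\colon\Spec C\to U$ and set $g=p\circ h$, which is flat and affine and has fpqc sheaf image $U$ (as $h$ is an epimorphism and $p$ a monomorphism). Let $S\subseteq\Spec B$ be the set-image of $g$. Since $g$ is quasi-compact, $S$ is pro-constructible, and since $g$ is flat it is stable under generization; hence $S$ is the intersection of the quasi-compact opens of $\Spec B$ containing it (standard facts on spectral spaces, \cite{SP014}). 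A quasi-compact open $V$ contains $U$ as a subsheaf iff $g$ factors through $V$ iff $S\subseteq V$, so, writing $W=\bigcap\{V : U\subseteq V\ \text{q.c. open}\}$, the underlying set of $W$ is $S$. Finally $U\hookrightarrow W$ is a monomorphism of subsheaves of $\Spec B$ which I claim is also an epimorphism: any $\Spec A\to W$ has set-image inside $S$, so base-changing $g$ along it yields a faithfully flat cover which lifts the map to $\Spec C$ and hence to $U$. A morphism of sheaves that is simultaneously mono and epi is an isomorphism, so $U=W$, which is the asserted description.

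The routine points (flatness and faithful flatness of the base changes, the residue-field nonvanishing criterion, and the descent statements) are mechanical. The genuine obstacle is the topological input in the converse: that the image $S$ of the quasi-compact flat morphism $g$ is an intersection of quasi-compact opens, resting on $S$ being pro-constructible and generization-stable. I would either cite the spectral-space results of \cite{SP014} or, to keep the argument self-contained, reprove that $\text{Im}(g)=\bigcap_\lambda\text{Im}(\Spec C_\lambda\to\Spec B)$ for a presentation $C=\colim_\lambda C_\lambda$ by finitely presented $B$-algebras (each image constructible by Chevalley) and combine it with generization-stability; the locus description of \ref{lem:locus of exactness} gives an alternative concrete handle on the separating quasi-compact opens.
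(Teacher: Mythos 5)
Your proposal is correct; the forward half matches the paper's argument in substance, while your converse takes a genuinely different route. For the forward direction the paper likewise builds an affine flat $B$-scheme covering the sheaf intersection --- there it is the product over $B$ of the schemes $V_i(A)=\{s\in E_i\otimes_B A \st \phi_i(s)=1\}$ attached to surjections $\phi_i\colon E_i\to B$ cutting out the opens, in place of your tensor product of finite products of localizations --- then uses the monomorphism property to see that it is an fpqc covering of the intersection and applies criterion (3) of \ref{lem:recognizing affine rings from global sections}; your detour through the sheaf image $U'$ is harmless. The converse is where the two proofs diverge. The paper argues via Tannaka: for a point $\alpha$ of the intersection $Z$ of the quasi-compact opens containing $U$, the functor $\alpha^*_{|\{\odi U\}}$ is right exact because, by \ref{lem:locus of exactness}, the exactness locus of every finite test sequence is a quasi-compact open containing $U$ and hence containing $Z$; the reconstruction theorem \ref{thm: Tannaka reconstrion for Loc X}, applied to the pseudo-affine $U$, then converts this fiber functor into an actual point of $U$ coinciding with $\alpha$. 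You argue instead with point-set topology and descent: the set image $S$ of an affine flat atlas $\Spec C\to U\to\Spec B$ is pro-constructible (Chevalley plus your filtered-limit argument) and stable under generization (going-down for flat maps), hence equals the intersection of the quasi-compact opens containing it, so every point of the sheaf intersection pulls back the atlas to an fpqc covering over which it lifts to $U$, and the lift descends through the monomorphism $U\to\Spec B$. Both arguments are complete, and the spectral-space facts you invoke are standard and available in \cite{SP014}. The paper's route is the more economical within its own development and realizes the theorem as a byproduct of Theorem A, exactly as advertised in its introduction; yours is self-contained, needs no Tannakian machinery at all, and isolates the genuinely topological content of the statement, so it would establish the characterization of pseudo-affine sheaves independently of, and prior to, the reconstruction theorem.
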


\begin{proof}
Let $X=\Spec B$ be an affine scheme, $\{U_{i}\}_{i\in I}$ be a set
of quasi-compact open subsets of $X$ and set $U=\cap_{i}U_{i}$.
If $i\in I$ the subscheme $U_{i}$ is the complement of the zero
locus of finitely many elements of $B$ and thus there exists a free
$B$-module $E_{i}$ and a map $\phi_{i}\colon E_{i}\arr B$ such
that $U_{i}$ is the locus where $\phi_{i}$ is surjective. Let $V_{i}\colon\Aff/B\to\sets$
be the functor
\[
V_{i}(A)=\{s\in E_{i}\otimes_{B}A\st\phi_{i}(s)=1\}
\]
It is easy to check that $V_{i}$ is affine, the map $V_{i}\to\Spec B$
factors through $U_{i}$ and that $V_{i}\to U_{i}$ is locally $(\Ker\phi_{i})_{|U_{i}}\to U_{i}$.
In particular $V_{i}\to\Spec B$ is flat. Now set $V=\prod_{i}V_{i}\colon\Aff/B\arr\sets$,
that is
\[
V(A)=\{(s_{i})_{i\in I}\st s_{i}\in E_{i}\otimes_{B}A\text{ and }\phi_{i}(s_{i})=1\}
\]
The scheme $V$ is affine, the map $V\arr\Spec B$ is flat, factors
through $U$ and $V\arr U$ is surjective (as functors). Moreover
if $\Spec A\arr U$ is any map then $V\times_{U}A=V\times_{X}A$ over
$A$ because $U\arr X=\Spec B$ is a monomorphism. It follows that
$V\to U$ is affine and faithfully flat, in particular an fpqc covering.
Thus $U$ is quasi-compact and $U\arr X$ is a flat monomorphism.
The result then follows from \ref{lem:recognizing affine rings from global sections}.

Now assume that $U$ is a pseudo-affine stack. By \ref{lem:recognizing affine rings from global sections}
we know that $p\colon U\to\Spec B$, $B=\Hl^{0}(\odi U)$, is a flat
monomorphism. Denote by $Z$ the intersection of all quasi-compact
open subsets of $\Spec B$ containing $U$ and set $\shC=\{\odi U\}$.
In particular $U\subseteq Z$.

Given a map $\alpha\colon\Spec A\arr\Spec B$ factoring through $Z$
we have to show that is factors through $U$. Consider $\alpha_{|\shC}^{*}\colon\shC\to\Loc(A)$,
which is a covariant strong monoidal functor. We show that $\alpha_{|\shC}^{*}\in\Fib_{U,\shC}(A)$.
By definition, if $\shT_{*}\colon\odi U^{m}\to\odi U^{q}\to0$ or
$\shT_{*}\colon\odi U^{n}\arr\odi U^{m}\arr\odi U^{q}\arr0$ is an
exact sequence on $U$, we must show that $\alpha_{|\shC}^{*}$ is
exact on $\shT_{*}$. The sequence $\shT_{*}$ defines a complex $\shW_{*}$
of free $A$-modules, namely $\shW_{*}=\Hl^{0}(\shT_{*})$, and the
locus $W$ in $\Spec A$ where $\shW_{*}$ is exact is quasi-compact,
open and contains $U$ (see \ref{lem:locus of exactness}). Thus $Z\subseteq W$,
the sequence $\shW_{*}$ become exact on $Z$ and therefore $\alpha^{*}$
maintains its exactness, as required.

Since $U$ is pseudo-affine and $\alpha_{|\shC}^{*}\in\Fib_{U,\shC}(A)$,
by \ref{thm: Tannaka reconstrion for Loc X} there exists $s\colon\Spec A\to U$
and an isomorphism $s_{|\shC}^{*}\simeq\alpha_{|\shC}^{*}$ in $\Fib_{U,\shC}(A)$.
On the other hand, since $B=\Hl^{0}(\odi U)$, the restriction $\{\odi{\Spec B}\}\to\shC=\{\odi U\}$
is an equivalence and therefore $\Fib_{U,\shC}(A)\to\Fib_{\Spec B,\{\odi{\Spec B}\}}$
is fully faithful. By \ref{thm: Tannaka reconstrion for Loc X} applied
on $\Spec B$ we can conclude that $\alpha\colon\Spec A\to\Spec B$
and $\Spec A\arrdi sU\to\Spec B$ coincides as required.
\end{proof}
\begin{example}
\label{exa:counterexample pseudo-affine} We show an example of a
pseudo-affine sheaf which is not quasi-affine. Let $B$ be a noetherian
normal domain with $\dim B\geq2$ and infinitely many primes $p$
with $\alt p=2$. For instance $B=k[x,y]$ for a field $k$. Let $I=\{p\in\Spec B\st\alt p\geq2\}$
and $U_{p}=\Spec B-V(p)$ for $p\in I$. Set $U=\bigcap_{p\in I}U_{p}$.
The sheaf $U$ is pseudo-affine by \ref{prop:arbitrary intersection of quasi-compact open is pseudo-affine}
and we are going to show that it is not quasi-affine. Assume by contradiction
that this is true. We identify the points of the topological space
of $U$ with the ones of its image in $\Spec B$. Remember that a
map $\Spec C\arr\Spec B$ factors through $U$ if and only if $pC=C$
for all $p\in I$. Given $q\in\Spec B$ and $k(q)\subseteq L$ an
extension of fields, we have that $\Spec L\arr\Spec B$ factors through
$U$ if and only if
\[
pL=L\:\forall p\in I\iff pk(q)=k(q)\:\forall p\in I\iff pB_{q}=B_{q}\:\forall p\in I\iff\alt q\leq1
\]
This tells us that $U=\{q\in\Spec B\st\alt q\leq1\}$ and that, for
all $q\in U$, $\Spec B_{q}$ factors through $U$. So, if $q\in U$,
The map $U\times_{B}B_{q}\arr\Spec B_{q}$ is a monomorphism with
a section and therefore an isomorphism. In particular $\odi{U,q}=B_{q}$.
If $V=\Spec A\subseteq U$ is an open subset and $q\in\Spec A$ is
a minimal prime, then $A_{q}=B_{q}$ has dimension $0$ and therefore
$q$ must be the generic point of $\Spec B$. Thus $U$ is irreducible
and, since the local rings of $U$ are all domains, integral. In particular
\[
\Hl^{0}(\odi U)=\bigcap_{q\in U}\odi{U,q}=\bigcap_{q\in\Spec B\st\alt q\leq1}B_{q}=B
\]
where the last equality follows from the fact that $B$ is normal.
Since $U$ is quasi-affine, it follows that it is an open subset of
$\Spec B$. So $Z=\Spec B-U$ has a finite number of generic points
and all $p\in\Spec B$ with $\alt p=2$ have to be generic points,
contradicting our assumptions.
\end{example}

\section{The stack of fiber functors}

The last statement of Theorem \ref{thm: Tannaka reconstrion for Loc X}
admits an almost converse. Let $\stX$ be a fibered category over
$R$ and $\shC\subseteq\Loc\stX$ be a full monoidal subcategory.
We define 
\[
\shG_{*}\colon\shC\to\Hom(\Fib_{\stX,\shC},\Loc_{R})
\]
where $\Loc_{R}$ is the stack of locally free sheaves over $R$,
mapping a sheaf $\E\in\shC$ to
\[
\shG_{\E}(\Gamma\in\Fib_{\stX,\shC}(A))=\Gamma_{\E}\in\Loc(A)\text{ for all }R\text{-algebras }A
\]
In particular the composition $\stX\to\Fib_{\stX,\shC}\arrdi{\shG_{\E}}\Loc_{R}$
is just $\E\in\shC$. Notice that, a priori, $\Fib_{\stX,\shC}$ is
not necessarily fibered in groupoids and therefore the notion of a
locally free sheaf on it is not defined (although one can easily guess
the definition). In the next result we will see that, under suitable
conditions on $\shC$, the stack $\Fib_{\stX,\shC}$ is fibered in
groupoid. In this case $\shG_{*}$ will just be a map $\shG_{*}\colon\shC\to\Loc(\Fib_{\stX,\shC})$,
which is easily seen to be a covariant strong monoidal functor.

Given a function $f\colon\shC\to\N$ we define $\Fib_{\stX,\shC}^{f}$
as the sub-fibered category of $\Fib_{\stX,\shC}$ of functors $\Gamma$
such that $\rk\Gamma_{\E}=f(\E)$ for all $\E\in\shC$.

We summarize all the main results of this section in the following:
\begin{thm}
\label{thm:when FibX,C has the resolution property} Let $\stX$ be
a quasi-compact fibered category over $R$ and $\shC\subseteq\Loc\stX$
be a full monoidal subcategory with duals. If $R$ is not a $\Q$-algebra
assume moreover that $\Sym^{n}\E\in\shC$ for all $\E\in\shC$ and
$n\in\N$. Set also 
\[
\shI=\{f\colon\shC\to\N\st\exists\xi\colon\Spec L\to\stX\text{ with }f(\E)=\rk(\xi^{*}\E)\text{ for all }\E\in\shC\}
\]
Then $\Fib_{\stX,\shC}$ is an fpqc stack in groupoids and:
\begin{enumerate}
\item $\Fib_{\stX,\shC}^{f}\neq\emptyset$ if and only if $f\in\shI$.
\item Using notation from Section \ref{subsec:Generalized-frame-bundles},
we have $D(\E)=D(\shG_{\E})$ and $Q(\E)=Q(\shG_{\E})$ for $\E\in\shC$,
so that $D(\shC)=D(\shC\to\Vect(\stX))=D(\shG_{*}\colon\shC\to\Vect(\Fib_{\stX,\shC}))$
is an affine scheme. If we think $\GL(Q(\E))$ as a group scheme over
$D(\shC)$ and set $G=\prod_{\E}\GL(\E)\to D(\shC)$ then the map
(see \ref{lem:frames})
\[
\Fib_{\stX,\shC}\to\Bi G
\]
is pseudo-affine. In particular $U=\Fr(\shG_{*}\colon\shC\to\Vect(\Fib_{\stX,\shC}))$
is pseudo-affine and $\Fib_{\stX,\shC}\simeq[U/G].$
\item $\Fib_{\stX,\shC}$ is a quasi-compact fpqc stack with affine diagonal
and the subcategory $\{\shG_{\E}\}_{\E\in\shC}\subseteq\Loc(\Fib_{\stX,\shC})$
generates $\QCoh(\Fib_{\stX,\shC})$. In particular $\Fib_{\stX,\shC}$
has the resolution property.
\item If $f\in\shI$ then the map $\Spec R\to D(\shC)$ induced by the constant
function $(f(\E))_{\E}\colon\Spec R\to\prod_{\E}\text{ranks}(\E)$
(see \ref{rem:mapping to D(E*)}) is well defined and we have Cartesian
diagrams    \[   \begin{tikzpicture}[xscale=2.7,yscale=-1.2]     \node (A0_0) at (0, 0) {$\Fib_{\stX,\shC}^f$};     \node (A0_1) at (1, 0) {$\Bi \GL_f$};     \node (A0_2) at (2, 0) {$\Spec R$};     \node (A1_0) at (0, 1) {$\Fib_{\stX,\shC}$};     \node (A1_1) at (1, 1) {$\Bi G$};     \node (A1_2) at (2, 1) {$D(\shC)$};     \path (A0_0) edge [->]node [auto] {$\scriptstyle{\omega}$} (A0_1);     \path (A0_1) edge [->]node [auto] {$\scriptstyle{}$} (A1_1);     \path (A1_0) edge [->]node [auto] {$\scriptstyle{}$} (A1_1);     \path (A0_2) edge [->]node [auto] {$\scriptstyle{}$} (A1_2);     \path (A1_1) edge [->]node [auto] {$\scriptstyle{}$} (A1_2);     \path (A0_0) edge [->]node [auto] {$\scriptstyle{}$} (A1_0);     \path (A0_1) edge [->]node [auto] {$\scriptstyle{}$} (A0_2);   \end{tikzpicture}   \] 
where $\GL_{f}=\prod_{\E}\GL_{f(\E)}$ and $\omega$ is induced by
the $(\shG_{\E})_{\E\in\shC}$. The vertical maps are flat closed
immersion, $\omega\colon\Fib_{\stX,\shC}^{f}\to\Bi\GL_{f}$ is pseudo-affine,
$U=\Fr(\shG_{*}\colon\shC\to\Vect(\Fib_{\stX,\shC}^{f}))$ is pseudo-affine,
$\Fib_{\stX,\shC}^{f}\simeq[U/\GL_{f}]$ and $\{\shG_{\E}\}_{\E\in\shC}\subseteq\Loc(\Fib_{\stX,\shC}^{f})$
generates $\QCoh(\Fib_{\stX,\shC}^{f})$. In particular $\Fib_{\stX,\shC}^{f}$
is a quasi-compact fpqc stack in groupoids with affine diagonal and
the resolution property.
\item Assume the category $\shC$ has the following two properties: there
exists a finite set $J$ of objects of $\shC$ such that any objects
of $\shC$ can be obtained by sheaves of $J$ using the operations
$\otimes$, $\oplus$, $-^{\vee},$ $\Sym^{m}$, $\Lambda^{m}$ several
times; there is a finite set of finite test sequences for $\shC$
such that if $\Gamma\colon\shC\to\Vect(A)$ is an $R$-linear strong
monoidal functor with then $\Gamma$ is exact. Then $\Fib_{\stX,\shC}$
is an algebraic stack and a quotient $[U'/\GL_{m}]$ for some quasi-affine
scheme $U'$.
\end{enumerate}
\end{thm}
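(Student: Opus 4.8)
The plan is to realise $\Fib_{\stX,\shC}$ as a quotient stack $[U/G]$ and to reduce every assertion to the single geometric fact that the frame bundle $U=\Fr(\shG_*)$ is pseudo-affine. First I would check that $\Fib_{\stX,\shC}$ is fibered in groupoids: since $\shC$ has duals every object is dualizable, so a monoidal natural transformation between two strong monoidal functors is automatically compatible with evaluation and coevaluation and hence invertible; thus every arrow of $\Fib_{\stX,\shC}$ is an isomorphism. Once this is known $\shG_*$ is a genuine strong monoidal functor $\shC\to\Loc(\Fib_{\stX,\shC})$, the equalities $D(\E)=D(\shG_\E)$, $Q(\E)=Q(\shG_\E)$ hold by construction, and $D(\shC)$ is affine because $\stX$ is quasi-compact, so that each $\text{ranks}(\E)$ is finite (see \ref{rem:connected components vector affine}). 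Applying \ref{lem:frames} to the family $\shG_*$ produces the Cartesian square presenting $U\to\Fib_{\stX,\shC}$ as a $G$-torsor and the map $\Fib_{\stX,\shC}\to\Bi G$ whose base change along the affine faithfully flat atlas $D(\shC)\to\Bi G$ (see \ref{prop:generalized GLn}) is $U\to D(\shC)$. By \ref{rem:absolute relative pseudo-affine} and \ref{rem:fpqc descent of pseudo-affine}, once $U$ is pseudo-affine it follows that $\Fib_{\stX,\shC}\to\Bi G$ is pseudo-affine and that $\Fib_{\stX,\shC}\simeq[U/G]$, which is (2).

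The heart of the argument, and the step I expect to be the main obstacle, is the pseudo-affineness of $U$. I would describe $U(A)$ as the set of strong monoidal fiber functors $\Gamma\colon\shC\to\Vect(A)$ together with a trivialisation $\Gamma_\E\simeq Q(\E)_A$ for every $\E$. Such a trivialisation turns $\Gamma$ into a system of structure constants: a matrix $\Gamma_\phi$ for each morphism $\phi$ of $\shC$ and an invertible matrix $\iota_{\E,\E'}$ for each monoidal comparison. As $\shC$ is small, these constants range over an affine scheme $W$ over $D(\shC)$, a product of spaces of matrices. The $R$-linearity, functoriality and the associativity, commutativity and unit axioms are polynomial identities, so they cut out a closed, hence affine, subscheme $Y\subseteq W$; trivialising the values also kills all automorphisms, so $U\to Y$ is a monomorphism. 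The two remaining conditions are open of quasi-compact type: strong monoidality is the principal open $\{\det\iota_{\E,\E'}\neq0\}$, and right exactness, i.e.\ that $\Gamma$ carry each finite test sequence to an exact complex of free modules, is a quasi-compact open by \ref{lem:locus of exactness}. Hence $U$ is the (possibly infinite) intersection inside the affine scheme $Y$ of these quasi-compact opens, and \ref{prop:arbitrary intersection of quasi-compact open is pseudo-affine} shows $U$ is pseudo-affine. The delicate part is the bookkeeping: organising the structure constants into an honest affine scheme and verifying that the frame bundle is exactly this subfunctor.

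Statement (3) is then formal. Being the quotient $[U/G]$ of a quasi-compact sheaf by the affine faithfully flat $D(\shC)$-group $G$, the stack $\Fib_{\stX,\shC}$ is a quasi-compact fpqc stack in groupoids; its diagonal is affine because the relative diagonal of the pseudo-affine map $\Fib_{\stX,\shC}\to\Bi G$ is an isomorphism (pseudo-affine sheaves are monomorphisms into affine schemes) while $\Bi G$ has affine diagonal. For the generation statement I would use that $\Fib_{\stX,\shC}\to\Bi G$ is pseudo-affine, so by \ref{rem: generating subcategory under pseudo-affine maps} the pullbacks of a generating family of $\QCoh(\Bi G)$ generate $\QCoh(\Fib_{\stX,\shC})$. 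By \ref{lem:generators for GLn} such a family consists of tensor products of the universal bundles $\shF_\E$ and of the sheaves $(\Sym^m\shF_\E)^\vee$, whose pullbacks are tensor products of the $\shG_\E$ and of $(\Sym^m\shG_\E)^\vee$. Strong monoidality gives $\shG_{\E\otimes\E'}=\shG_\E\otimes\shG_{\E'}$, $\shG_{\E^\vee}=\shG_\E^\vee$ and $\shG_{\Sym^m\E}=\Sym^m\shG_\E$, so these pullbacks all lie in $\shG(\shC)$; this is precisely why one assumes $\Sym^n\E\in\shC$ when $R$ is not a $\Q$-algebra, whereas over a $\Q$-algebra one may use $\shF_\E^\vee$ and dispense with symmetric powers. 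Thus $\{\shG_\E\}_{\E\in\shC}$ generates $\QCoh(\Fib_{\stX,\shC})$.

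Finally I would treat (1), (4) and (5). For (1) the implication $f\in\shI\Rightarrow\Fib_{\stX,\shC}^f\neq\emptyset$ is immediate from $\shP_\shC$ applied to a geometric point realising $f$. Conversely a point of $\Fib_{\stX,\shC}^f$ gives, after passing to a geometric point and dualising, a functor $\Gamma^\vee\in\SMex_R(\shC,\Omega)$ over a field $\Omega$; by \ref{thm:main thm main paper} it equals $\Omega^{\alA}$ for $\alA=\alA_{\Gamma^\vee,\shC}$, and the normalisation $\Gamma^\vee_{\odi{\stX}}=\Omega$ forces $\stY=\Spec\alA\neq\emptyset$. A geometric point of $\stY$ yields $\xi\colon\Spec L\to\stX$, and the monoidal surjection of \ref{lem:strong monoidal implies surjective}, being compatible with evaluation and coevaluation, is an isomorphism $\Gamma^\vee_\E\otimes\odi{\stY}\simeq(\xi^*\E)^\vee$, whence $\rk\xi^*\E=f(\E)$ and $f\in\shI$. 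Statement (4) follows by base change along the flat closed immersion $\Spec R\to D(\shC)$ attached to the constant function $f$ (well defined by \ref{lem:rank loci}): it cuts $\Fib_{\stX,\shC}^f$ out of $\Fib_{\stX,\shC}$ and $\Bi\GL_f$ out of $\Bi G$, and every assertion of (2)--(3) transports. For (5) the finite generating set $J$ together with strong monoidality makes a fiber functor determined by its restriction to $J$, so $G$ may be replaced by the finite product $\prod_{\E\in J}\GL(\shG_\E)$; the finite list of test sequences replaces the infinite intersection of open conditions by a finite one, so the analogue $U'$ of $U$ is now a quasi-compact open of an affine scheme, hence quasi-affine, and after the standard block-diagonal induction one obtains a presentation $\Fib_{\stX,\shC}\simeq[U'/\GL_m]$ exhibiting it as algebraic.
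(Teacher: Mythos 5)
Your overall architecture (structure-constant affine scheme, right exactness as an intersection of quasi-compact opens via \ref{lem:locus of exactness} and \ref{prop:arbitrary intersection of quasi-compact open is pseudo-affine}, descent to $[U/G]$, generation pulled back from $\Bi G$ via \ref{lem:generators for GLn}) is the same as the paper's, and your rigidity argument for the groupoid property is a correct and more elementary alternative to the paper's reduction to $\shB_{\E}$. But there is a genuine gap at the two points where the paper has to work hardest. In your proof of (1) you set $\alA=\alA_{\Gamma^{\vee},\shC}$ and invoke \ref{thm:main thm main paper} to get $\Gamma^{\vee}\simeq\Omega^{\alA}$, hence $\Hl^{0}(\alA)\neq0$ and $\stY=\Spec\alA\neq\emptyset$. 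Theorem \ref{thm:main thm main paper} requires $\shC$ to generate $\QCoh(\stX)$, which is precisely the hypothesis absent from Theorem \ref{thm:when FibX,C has the resolution property}; worse, the conclusion you draw is false in general. In Example \ref{exa:counterexample} (the baby case $\shC=\{\odi X\}$), the point $P=(x,y)$ lies in $\Fib_{X,\{\odi X\}}$ but not in the image of $X\to\Spec A$, and for the corresponding fiber functor $\Gamma=\Hl^{0}(-)\otimes_{A}k(P)$ the algebra $\alA_{\Gamma^{\vee},\shC}$ is $\odi X\otimes_{A}k(P)$, so $\stY$ is the fiber of $X\to\Spec A$ over $P$, which is \emph{empty}. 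Thus no geometric point of $\stY$ exists, and the surjection of \ref{lem:strong monoidal implies surjective} (which is indeed an isomorphism by rigidity; that part of your argument is fine) carries no information. Statement (1) survives in that example only because some \emph{other} point of $X$ realizes the rank function; it cannot be found "over $\Gamma$". This is exactly why the paper argues differently: for each finite $J\subseteq\shC$ it pushes $\Gamma$ into $\Fib_{\shB_{J},\shD_{J}}$, where $\shD_{J}$ \emph{does} generate $\QCoh(\shB_{J})$ by \ref{lem:generators for GLn}, so that Tannaka reconstruction (\ref{thm: Tannaka reconstrion for Loc X}) applies there and shows $(\rk\Gamma_{*})_{|J}\in\shI_{J}$; then quasi-compactness of $\stX$ and the finite intersection property of the non-empty open-and-closed loci $\stX_{f_{J}}$ produce a single geometric point realizing all ranks simultaneously.

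Relatedly, you assert $D(\E)=D(\shG_{\E})$ "by construction". Only the inclusion $\text{ranks}(\E)\subseteq\text{ranks}(\shG_{\E})$ is by construction (via $\shP_{\shC}$); the reverse inclusion says that every rank of $\Gamma_{\E}$ at a point of $\Fib_{\stX,\shC}$ is already realized at a geometric point of $\stX$, which is (a case of) statement (1) and is deduced in the paper from \ref{prop:charcaterization of shI}. Without it you do not know that $\text{ranks}(\shG_{\E})$ is finite (quasi-compactness of $\stX$ bounds $\text{ranks}(\E)$, not $\text{ranks}(\shG_{\E})$), hence not that $D(\shC)$ is affine, so the group $G$ over $D(\shC)$ and the map $\Fib_{\stX,\shC}\to\Bi G$ underlying your (2)--(5) are not yet defined. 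Since you prove (1) \emph{after} (2), and incorrectly, the chain is circular. A smaller slip: the relative diagonal of the pseudo-affine map $\Fib_{\stX,\shC}\to\Bi G$ is not an isomorphism --- that would make the map a monomorphism, which it is not, since it remembers only the bundles $\Gamma_{\E}$ and forgets naturality and the monoidal data. It is, however, a closed immersion: a pseudo-affine sheaf $P$ over an affine $T$ satisfies $P\times_{B}P\simeq P$ with $B=\Hl^{0}(\odi P)$, so $\Delta_{P/T}$ is a base change of the closed immersion $\Spec B\to\Spec B\times_{T}\Spec B$. That corrected statement still yields the affine diagonal in (3), but the justification as you wrote it is false.
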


\begin{rem}
\label{rem:bad algebraic fib} It is not clear if $\Fib_{\stX,\shC}^{f}$
is algebraic even if $\stX$ is a projective scheme and we choose
a $\shC$ that does not generate $\QCoh(\stX)$. The statement of
\ref{thm:when FibX,C has the resolution property}, 5) is not really
satisfactory for two reasons. The first condition requires $\shC$
to be not too big, for example $\shC=\Loc(\stX)$ in general will
not have this property. The second problem is that it is not clear
when the second condition is satisfied.
\end{rem}

\subsection{Proof of Theorem \ref{thm:when FibX,C has the resolution property}}

The entire section is dedicated to the proof of Theorem \ref{thm:when FibX,C has the resolution property}.
In particular we assume the hypothesis and notation in its statement.

It is easy to see that $\Fib_{\stX,\shC}$ is a stack (not necessarily
in groupoids) for the fpqc topology on $\Aff/R$. To avoid problems
with disjoint unions we can assume that $\stX$ is a Zariski stack.

Given a finite subset $J$ of $\shC$ denote by $\shI_{J}$ the set
of $f\colon J\to\N$ extending to a function of $\shI$. In other
words a function $f\colon J\to\N$ belongs to $\shI_{J}$ if and only
if there exists a point $s\colon\Spec L\to\stX$ such that $f(\E)=\rk s^{*}\E$
for all $\E\in J$. Given $f\colon J\to\N$ we denote by $\stX_{f}$
the open locus of $\stX$ where $\rk\E=f(\E)$ for all $\E\in J$
and set $\GL_{f}=\prod_{\E\in J}\GL_{f(\E)}$. Notice that $\stX=\bigsqcup_{f\in\shI_{J}}\stX_{f}$
and, since $\stX$ is quasi-compact, $\shI_{J}$ is finite. The sheaves
$(\E_{|\stX_{f}})_{\E\in J}$ induces a map $\stX_{f}\arr\Bi\GL_{f}=\shB_{f}$
and thus a map
\[
\omega_{J}\colon\stX\arr\bigsqcup_{f\in\shI_{J}}\shB_{f}=\shB_{J}
\]
For all $\E\in J$ there is a locally free sheaf $\shH_{\E,J}$ on
$\shB_{J}$ such that $(\shH_{\E,J})_{|\shB_{f}}$ is the canonical
locally free sheaf of rank $f(\E)$ pullback from $\Bi\GL_{f(\E)}$.
By construction $\omega_{J}^{*}\shH_{\E,J}\simeq\E$. Set $\shD_{J}$
for the subcategory of $\Loc(\shB_{J})$ consisting of all sheaves
$\shG$ such that $\omega_{J}^{*}\shG\in\shC$. This is a monoidal
subcategory with duals. Finally when $J=\{\E\}$ we will replace $J$
by $\E$ in the subscripts.

The idea now is to prove that $\shD_{J}$ generates $\QCoh(\shB_{j}),$so
that $\shP_{\shD_{J}}\colon\shB_{J}\arr\Fib_{\shB_{J},\shD_{J}}$
is an equivalence of stacks by \ref{thm: Tannaka reconstrion for Loc X}.
Using this, any time we will have a monoidal functor $\Gamma\colon\shC\to\Vect(A)$
in $\Fib_{\stX,\shC}(A)$, we can compose it via the pullback $\omega_{J}^{*}\colon\shD_{J}\to\shC$
and deduce that $\Gamma\circ\omega_{J}^{*}\simeq s^{*}$ for some
$s\colon\Spec A\to\shB_{J}$.
\begin{lem}
The category $\shD_{J}$ generates $\QCoh(\shB_{J})$ and, in particular,
$\shP_{\shB_{J}}\colon\shB_{J}\to\Fib_{\shB_{J},\shD_{J}}$ is an
equivalence.
\end{lem}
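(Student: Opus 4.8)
The plan is to reduce the generation statement to the individual components $\Bi\GL_f$ and then invoke the explicit description of generators of $\QCoh(\Bi\GL_f)$ from \ref{lem:generators for GLn}. Since $\shI_J$ is finite, $\shB_J=\bigsqcup_{f\in\shI_J}\shB_f$ is a finite disjoint union and $\QCoh(\shB_J)\simeq\prod_{f}\QCoh(\shB_f)$, with morphisms decomposing componentwise. Thus, given $\shM=(\shM_f)_f\in\QCoh(\shB_J)$, it suffices to exhibit for each $f$ a surjection onto $\shM_f$ from a direct sum of sheaves each of which is the restriction to $\shB_f$ of a global sheaf lying in $\shD_J$; the structure maps out of the other components can then be taken to be zero, producing a single global surjection from a direct sum of sheaves in $\shD_J$.

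On a fixed component $\shB_f=\Bi\GL_f=\Bi(\prod_{\E\in J}\GL_{f(\E)})$ the universal bundles are $\shF_\E:=(\shH_{\E,J})_{|\shB_f}$. By \ref{lem:generators for GLn} the tensor products of the sheaves $\{\shF_\E,(\Sym^m\shF_\E)^{\vee}\}_{\E\in J,\, m\in\N}$ generate $\QCoh(\shB_f)$, and when $R$ is a $\Q$-algebra one may instead use $\{\shF_\E,\shF_\E^{\vee}\}_{\E\in J}$. Each of these generators is the restriction to $\shB_f$ of the corresponding global tensor product of the $\shH_{\E,J}$ and $(\Sym^m\shH_{\E,J})^{\vee}$ (resp. of the $\shH_{\E,J}$ and $\shH_{\E,J}^{\vee}$), which is a globally defined object of $\Loc(\shB_J)$.

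The key step is to check that these global sheaves lie in $\shD_J$, i.e. that their pullback along $\omega_J$ lands in $\shC$. Since $\omega_J^{*}$ is strong monoidal and $\omega_J^{*}\shH_{\E,J}\simeq\E$, the pullback of such a sheaf is the corresponding tensor product of copies of $\E$ and of $(\Sym^m\E)^{\vee}$ (resp. of $\E$ and $\E^{\vee}$). As $\shC$ is monoidal and closed under duals, this lies in $\shC$ once each factor does: in the non-$\Q$ case the factor $(\Sym^m\E)^{\vee}$ belongs to $\shC$ precisely because of the standing hypothesis $\Sym^n\E\in\shC$, while in the $\Q$-algebra case only closure under duals is used. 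This matching of the two hypotheses to the two generating families of \ref{lem:generators for GLn} is the only real content of the argument, and it is the point I would be most careful about; the disjoint-union bookkeeping of the first paragraph is routine.

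Finally, $\shD_J$ is a monoidal subcategory with duals (because $\omega_J^{*}$ is monoidal), $\shB_J$ is a quasi-compact fpqc stack with affine diagonal, and we have just shown that $\shD_J$ generates $\QCoh(\shB_J)$. Therefore \ref{thm: Tannaka reconstrion for Loc X} applies and yields that the reconstruction map $\shB_J\to\Fib_{\shB_J,\shD_J}$ is an equivalence.
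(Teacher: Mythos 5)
Your proof is correct and follows essentially the same route as the paper: verify that the generating sheaves of \ref{lem:generators for GLn} (the $\shH_{\E,J}$, their duals, and the $(\Sym^m\shH_{\E,J})^{\vee}$ in the non-$\Q$ case) lie in $\shD_J$ using the standing hypotheses on $\shC$, and then conclude with \ref{lem:generators for GLn} and \ref{thm: Tannaka reconstrion for Loc X}. The componentwise bookkeeping over the finite disjoint union $\shB_J=\bigsqcup_f\shB_f$ that you spell out is left implicit in the paper's (much terser) proof, but it is the same argument.
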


\begin{proof}
By hypothesis $\shD_{J}$ contains $\shH_{\E,J}$ and therefore $\shH_{\E,J}^{\vee}$
for $\E\in J$. Moreover if $R$ is not a $\Q$-algebra then $\shD_{J}$
contains also $\Sym^{n}\shH_{\E,J}$ and therefore $(\Sym^{n}\shH_{\E,J})^{\vee}$
for $n\in\N$ and $\E\in J$. So everything follows from \ref{lem:generators for GLn}
and \ref{thm: Tannaka reconstrion for Loc X}.
\end{proof}
\begin{lem}
The stack $\Fib_{\stX,\shC}$ is fibered in groupoids.
\end{lem}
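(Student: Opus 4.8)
The plan is to exploit the fact that $\Fib_{\stX,\shC}$ is already a fibered category over $\Aff/R$: a cartesian lift of an $R$-algebra map $A\to B$ is furnished by post-composing with base change, $\Gamma\mapsto\Gamma\otimes_A B$, where $\Vect(A)\to\Vect(B)$. For such a fibered category, being fibered in groupoids is equivalent to every fiber being a groupoid, i.e. to the assertion that every morphism of fiber functors is invertible. So I would reduce at once to the fiberwise statement: if $\Gamma,\Gamma'\colon\shC\to\Vect(A)$ are $R$-linear covariant strong monoidal functors and $\eta\colon\Gamma\to\Gamma'$ is a monoidal natural transformation, then each component $\eta_\E\colon\Gamma_\E\to\Gamma'_\E$ is an isomorphism. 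This uses neither exactness of $\Gamma$ nor any hypothesis on $\stX$ beyond quasi-compactness.

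The mechanism producing the inverse is the rigidity of $\shC$. First I would fix $\E\in\shC$ and use that $\shC$ has duals: the evaluation $\duale{\E}\otimes\E\to\odi{\stX}$ and coevaluation $\odi{\stX}\to\E\otimes\duale{\E}$ lie in $\shC$ and satisfy the triangle identities. Since $\Gamma$ and $\Gamma'$ are strong monoidal they send this data to genuine duality data in $\Vect(A)$, exhibiting $\Gamma_{\duale{\E}}$ (resp. $\Gamma'_{\duale{\E}}$) as a dual of $\Gamma_\E$ (resp. $\Gamma'_\E$). I would then feed the two (co)evaluation maps into the naturality squares of $\eta$ and simplify using that $\eta$ is monoidal, so that $\eta_{\odi{\stX}}=\id$ and $\eta_{\E\otimes\duale{\E}}$ is identified with $\eta_\E\otimes\eta_{\duale{\E}}$ under the structure isomorphisms. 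This yields the two identities $(\eta_\E\otimes\eta_{\duale{\E}})\circ\Gamma(\text{coev})=\Gamma'(\text{coev})$ and $\Gamma(\text{ev})=\Gamma'(\text{ev})\circ(\eta_{\duale{\E}}\otimes\eta_\E)$. These say precisely that, under the two dualities, $\eta_{\duale{\E}}$ is dual to a two-sided inverse of $\eta_\E$; equivalently, a short zig-zag computation with $\Gamma(\text{coev})$, $\Gamma'(\text{ev})$ and $\eta_{\duale{\E}}$ produces an explicit two-sided inverse of $\eta_\E$, so $\eta_\E$ is an isomorphism for every $\E$ and hence $\eta$ is invertible.

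The only genuine content here is this rigidity lemma: a morphism of tensor functors out of a category with duals is automatically an isomorphism. It is formal but requires care in bookkeeping the coherence isomorphisms of the two monoidal structures together with the two sets of duality data, and this is the step I expect to be the main obstacle, even though it is standard in Tannakian theory. Finally I would observe that the groupoid fibers assemble correctly: duals, evaluations and coevaluations are preserved by $-\otimes_A B$, so the base-change functors carry monoidal natural transformations to monoidal natural transformations and the fiberwise invertibility is compatible with restriction, giving a category fibered in groupoids.
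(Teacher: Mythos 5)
Your proof is correct, but it is genuinely different from the one in the paper. The paper proves this lemma by recycling the machinery already set up in the proof of Theorem \ref{thm:when FibX,C has the resolution property}: given a morphism $\delta\colon\Gamma\to\Gamma'$ in $\Fib_{\stX,\shC}(A)$ and $\E\in\shC$, it composes with the pullback $\omega_{\E}^{*}\colon\shD_{\E}\to\shC$ along the classifying map $\stX\to\shB_{\E}$ (a disjoint union of stacks $\Bi\GL_{f}$), and invokes the preceding lemma that $\shD_{\E}$ generates $\QCoh(\shB_{\E})$, so that by Theorem \ref{thm: Tannaka reconstrion for Loc X} one has $\Fib_{\shB_{\E},\shD_{\E}}(A)\simeq\shB_{\E}(A)$, a groupoid; hence $\delta\circ\omega_{\E}^{*}$ is invertible and, evaluating at the tautological sheaf $\shH_{\E,\{\E\}}$, so is $\delta_{\E}$. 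You instead give the classical Tannakian rigidity argument (as in Deligne--Milne, Prop.\ 1.13): since every $\E\in\shC$ has a dual in $\shC$, and since $\shC$ is full in $\Loc(\stX)$ so that evaluation and coevaluation are morphisms of $\shC$, any unit-preserving monoidal natural transformation between strong monoidal functors is invertible, the inverse of $\eta_{\E}$ being the zig-zag through $\Gamma(\text{coev})$, $\eta_{\duale{\E}}$ and $\Gamma'(\text{ev})$; your reduction to the fiberwise statement is also legitimate, since a fibered category is fibered in groupoids exactly when its fibers are groupoids, and base change $\Gamma\mapsto\Gamma\otimes_{A}B$ preserves fiber functors and monoidal transformations. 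The trade-off: your argument is self-contained and needs strictly weaker hypotheses — no quasi-compactness of $\stX$, no closure of $\shC$ under $\Sym^{n}$ when $R$ is not a $\Q$-algebra, and no appeal to the reconstruction theorem or to the generation lemma for representations of $\GL$ — so it shows that $\Fib_{\stX,\shC}$ is fibered in groupoids for any full monoidal subcategory with duals; the paper's proof, by contrast, is a one-line corollary of machinery it must develop anyway for the rest of Theorem \ref{thm:when FibX,C has the resolution property}, where the same composition trick $\Gamma\mapsto\Gamma\circ\omega_{J}^{*}$ is reused repeatedly (e.g.\ for the rank characterization in \ref{prop:charcaterization of shI}).
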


\begin{proof}
If $\Gamma,\Gamma'\in\Fib_{\stX,\shC}(A)$, $\delta\colon\Gamma\arr\Gamma'$
is a morphism and $\E\in\shC$ then $\delta_{\E}\colon\Gamma_{\E}\arr\Gamma'_{\E}$
is an isomorphism because $\Gamma\circ\omega_{\E}^{*}\arrdi{\delta\circ\omega_{\E}^{*}}\Gamma'\circ\omega_{\E}^{*}$
is a morphism in the groupoid $\Fib_{\stB_{\E},\shD_{\E}}(A)\simeq\shB_{\E}(A)$.
\end{proof}
\begin{lem}
\label{lem:shG and operations} Let $t$ be any of the following operations
of sheaves $t=-^{\vee}$, $\Sym^{m}$, $\Lambda^{m}$ and $\E\in\shC$
be such that $t(\E)\in\shC$. Then $\shG_{t(\E)}\simeq t(\shG_{\E})$
on $\Fib_{\stX,\shC}$. In particular for any $\Gamma\in\Fib_{\stX,\shC}(A)$
we get an isomorphism $\Gamma_{t(\E)}\simeq t(\Gamma_{\E})$ by testing
the previous isomorphism in $\Gamma$.
\end{lem}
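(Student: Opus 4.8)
The plan is to reduce, for a fixed $\E\in\shC$ and a fixed operation $t\in\{-^\vee,\Sym^m,\Lambda^m\}$, to the single-object frame stack $\shB_\E=\Bi_{D(\E)}\GL(Q(\E))$ introduced above (the case $J=\{\E\}$), on which the statement becomes essentially tautological. Write $\shH$ for the tautological bundle $\shH_{\E,\E}$ on $\shB_\E$, so that $\omega_\E^*\shH\simeq\E$. The key observation I would make is that each operation $t$ commutes with pullback of locally free sheaves; hence $\omega_\E^*\big(t(\shH)\big)\simeq t(\omega_\E^*\shH)\simeq t(\E)$, which lies in $\shC$ by hypothesis. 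Therefore $t(\shH)$ is an object of $\shD_\E$, and the whole point is that on $\shB_\E$ we never have to present $t$ as a (co)kernel: both $\shH$ and $t(\shH)$ are honest sheaves in $\shD_\E$ and $t(\shH)=t(\shH)$ is a tautology.

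Next I would produce the isomorphism at sheaf level. Precomposition with $\omega_\E^*\colon\shD_\E\to\shC$ defines a morphism $r_\E\colon\Fib_{\stX,\shC}\to\Fib_{\shB_\E,\shD_\E}$, $\Gamma\mapsto\Gamma\circ\omega_\E^*$; this is well defined because $\omega_\E^*$ is $R$-linear, strong monoidal, and carries test sequences in $\shD_\E$ to test sequences in $\shC$ (pullback is right exact, so it preserves the exactness of $\shG''\to\shG'\to\shG\to0$ and keeps the terms in $\shC^\oplus$), so $\Gamma\circ\omega_\E^*$ is again a fiber functor. Unwinding definitions gives $r_\E^*\shG^{\shB_\E}_{\shG'}\simeq\shG_{\omega_\E^*\shG'}$ for $\shG'\in\shD_\E$; in particular $\shG_\E\simeq r_\E^*\shG^{\shB_\E}_{\shH}$ and $\shG_{t(\E)}\simeq r_\E^*\shG^{\shB_\E}_{t(\shH)}$. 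By the preceding lemma that $\shD_\E$ generates $\QCoh(\shB_\E)$ together with \ref{thm: Tannaka reconstrion for Loc X}, the functor $\shP_{\shB_\E}\colon\shB_\E\to\Fib_{\shB_\E,\shD_\E}$ is an equivalence, under which both $\shG^{\shB_\E}_{t(\shH)}$ and $t\big(\shG^{\shB_\E}_{\shH}\big)$ correspond to the single sheaf $t(\shH)\in\Loc(\shB_\E)$; hence $\shG^{\shB_\E}_{t(\shH)}\simeq t\big(\shG^{\shB_\E}_{\shH}\big)$. Pulling back along $r_\E$ and using that $t$ commutes with $r_\E^*$ yields
\[
\shG_{t(\E)}\simeq r_\E^*\shG^{\shB_\E}_{t(\shH)}\simeq r_\E^*\,t\big(\shG^{\shB_\E}_{\shH}\big)\simeq t\big(r_\E^*\shG^{\shB_\E}_{\shH}\big)\simeq t(\shG_\E)
\]
on $\Fib_{\stX,\shC}$. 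The ``in particular'' clause then follows by testing in a point: for $\Gamma\in\Fib_{\stX,\shC}(A)$ one has $\Gamma\circ\omega_\E^*\simeq s^*_{|\shD_\E}$ for some $s\colon\Spec A\to\shB_\E$, whence $\Gamma_{t(\E)}=\Gamma_{\omega_\E^*(t(\shH))}\simeq s^*\big(t(\shH)\big)\simeq t(s^*\shH)\simeq t(\Gamma_\E)$.

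The only genuine difficulty — and the reason for routing everything through $\shB_\E$ rather than arguing directly on $\shC$ — is that $\Sym^m$ and especially $\Lambda^m$ cannot be written uniformly as cokernels of maps assembled from the symmetric-group action: in characteristic $2$, for instance, $\Lambda^2\shH$ is \emph{not} the cokernel of $\id+\tau$ on $\shH^{\otimes2}$ (the exterior power is the quotient by the submodule generated by the diagonal squares, not by $x\otimes y+y\otimes x$), so the naive argument ``a right exact strong monoidal functor preserves the defining coequalizer'' is simply false in positive characteristic, and the hypothesis $\Sym^n\E\in\shC$ imposed when $R$ is not a $\Q$-algebra is exactly what one needs to keep $t(\shH)$ inside $\shD_\E$. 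The frame-bundle reduction sidesteps this entirely, since the only inputs it uses are that $t$ commutes with pullback and that $t(\E)\in\shC$. Consequently I expect no analytic content at all; the work is pure bookkeeping, namely checking $t(\shH)\in\shD_\E$, verifying that $\Gamma\circ\omega_\E^*$ remains a fiber functor, and tracking the equivalence $\shP_{\shB_\E}$ so that the comparison is natural in $\Gamma$ and hence an isomorphism of sheaves.
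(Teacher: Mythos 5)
Your proof is correct and follows essentially the same route as the paper's: both reduce to the frame stack $\shB_{\E}$ for $J=\{\E\}$, use the generation lemma together with Theorem \ref{thm: Tannaka reconstrion for Loc X} to identify $\Fib_{\shB_{\E},\shD_{\E}}\simeq\shB_{\E}$, observe that $t(\E)\in\shC$ forces $t(\shH_{\E,\{\E\}})\in\shD_{\E}$, and conclude via compatibility of $t$ with pullback. The only difference is organizational — the paper runs the chain of isomorphisms pointwise at each $\Gamma$ with naturality, while you phrase it as an isomorphism of tautological sheaves pulled back along $r_{\E}$ — which amounts to the same computation.
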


\begin{proof}
Consider $J=\{\E\}\subseteq\shC$. The functor $\omega_{\E}\colon\stX\arr\shB_{\E}$
induces $\omega_{\E}^{*}\colon\shD_{\E}\arr\shC$ and a functor $\delta\colon\Fib_{\stX,\shC}\arr\Fib_{\shB_{\E},\shD_{\E}}\simeq\shB_{\E}$.
In other words there is a canonical isomorphism 
\[
\delta(\Gamma)_{|\shD_{\E}}^{*}\simeq\Gamma\circ\omega_{\E}^{*}\colon\shD_{\E}\to\Loc A\text{ for }\Gamma\in\Fib_{\stX,\shC}(A)
\]
Since $t(\E)\simeq\omega_{\E}^{*}t(\shH_{\E,\{\E\}})\in\shC$ we have
$t(\shH_{\E,\{\E\}})\in\shD_{\E}$. So we have isomorphisms
\[
\shG_{t(\E)}(\Gamma)=\Gamma_{t(\E)}\simeq(\Gamma\circ\omega_{\E}^{*})_{t(\shH_{\E,\{\E\}})}\simeq\delta(\Gamma)^{*}t(\shH_{\E,\{\E\}})\simeq t(\delta(\Gamma)^{*}\shH_{\E,\{\E\}})\simeq t(\Gamma_{\E})\simeq t(\shG_{\E})(\Gamma)
\]
natural in $\Gamma\in\Fib_{\stX,\shC}$ and thus that $\shG_{t(\E)}\simeq t(\shG_{\E})$.
\end{proof}
\begin{prop}
\label{prop:charcaterization of shI} We have $\Fib_{\stX,\shC}^{f}\neq\emptyset$
if and only if $f\in\shI$.
\end{prop}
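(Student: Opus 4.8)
The plan is to prove the two implications separately, with essentially all the work going into the direction $\Fib_{\stX,\shC}^{f}\neq\emptyset\then f\in\shI$.

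For the easy implication $f\in\shI\then\Fib_{\stX,\shC}^{f}\neq\emptyset$, I would argue directly from the map $\shP_{\shC}$. Given a geometric point $\xi\colon\Spec L\to\stX$ with $f(\E)=\rk\xi^{*}\E$ for all $\E\in\shC$, the functor $\shP_{\shC}(\xi)=\xi_{|\shC}^{*}\colon\shC\to\Loc(L)$ is $R$-linear, strong monoidal and, being a pullback, right exact on every finite test sequence; hence it is a fiber functor, i.e.\ an object of $\Fib_{\stX,\shC}(L)$. Since $\shG_{\E}(\shP_{\shC}(\xi))=\xi^{*}\E$ has rank $f(\E)$, it lies in $\Fib_{\stX,\shC}^{f}(L)$, which is therefore nonempty.

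For the converse, suppose $\Gamma\in\Fib_{\stX,\shC}(A)$ with $A\neq0$ and $\rk\Gamma_{\E}=f(\E)$ for all $\E$. First I would reduce to the case $A=k$ a field: choosing a geometric point $\Spec k\to\Spec A$ and replacing $\Gamma$ by $\Gamma\otimes_{A}k$ keeps $\Gamma$ a fiber functor (base change of a fiber functor along a map of $\Aff/R$ is again a fiber functor, as $\Fib_{\stX,\shC}$ is fibered over $\Aff/R$) and preserves the ranks. Then, for every finite subset $J\subseteq\shC$, I would compose $\Gamma$ with the pullback $\omega_{J}^{*}\colon\shD_{J}\to\shC$ to obtain $\Gamma\circ\omega_{J}^{*}\in\Fib_{\shB_{J},\shD_{J}}(k)$. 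Because $\shD_{J}$ generates $\QCoh(\shB_{J})$, the reconstruction map $\shP_{\shD_{J}}\colon\shB_{J}\to\Fib_{\shB_{J},\shD_{J}}$ is an equivalence, so $\Gamma\circ\omega_{J}^{*}$ corresponds to a $k$-point $s_{J}$ of $\shB_{J}=\bigsqcup_{g\in\shI_{J}}\Bi\GL_{g}$. As $\Spec k$ is connected, $s_{J}$ factors through a single component $\Bi\GL_{g}$, and reading off the ranks of the universal bundles along $s_{J}$ gives $g(\E)=\rk(\Gamma\circ\omega_{J}^{*})_{\shH_{\E,J}}=\rk\Gamma_{\E}=f(\E)$ for all $\E\in J$. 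Thus $f_{|J}=g\in\shI_{J}$, i.e.\ for every finite $J$ there is a geometric point of $\stX$ realizing $f$ on $J$.

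The main obstacle is the passage from this ``finite-$J$'' realizability to a single geometric point realizing $f$ on all of $\shC$ at once, which I would handle by a compactness argument. Fix a representable fpqc covering $\Spec C\to\stX$, which exists since $\stX$ is quasi-compact, and for each $\E\in\shC$ let $Z_{\E}\subseteq\Spec C$ be the preimage of the rank-$f(\E)$ locus of $\E$ in $\stX$; as that locus is an open and closed immersion, $Z_{\E}$ is closed in $\Spec C$. For a finite $J$ the intersection $\bigcap_{\E\in J}Z_{\E}$ is the preimage of $\stX_{f_{|J}}$, which is nonempty because $f_{|J}\in\shI_{J}$ and $\Spec C\to\stX$ is surjective. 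Hence $\{Z_{\E}\}_{\E\in\shC}$ is a family of closed subsets of the quasi-compact space $\Spec C$ with the finite intersection property, so $\bigcap_{\E\in\shC}Z_{\E}\neq\emptyset$. Any point of this intersection, with residue field $L$, yields a geometric point $\Spec L\to\Spec C\to\stX$ with $\rk\xi^{*}\E=f(\E)$ for all $\E\in\shC$, proving $f\in\shI$.
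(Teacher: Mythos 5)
Your proof is correct and follows essentially the same route as the paper's: the easy direction via $\shP_{\shC}$, and the converse by reducing to a field, composing with $\omega_{J}^{*}$ and using the equivalence $\shP_{\shD_{J}}\colon\shB_{J}\to\Fib_{\shB_{J},\shD_{J}}$ to get $f_{|J}\in\shI_{J}$ for every finite $J\subseteq\shC$. Your final step merely makes explicit what the paper asserts from quasi-compactness of $\stX$ — you pull the nested nonempty open-and-closed substacks $\stX_{f_{|J}}$ back to an affine atlas $\Spec C\to\stX$ and apply the finite intersection property there — so it is the same argument, spelled out in slightly more detail.
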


\begin{proof}
For the if part, if $f\in\shI$ there exists $s\colon\Spec L\arr\stX$
such that $\rk s^{*}\E=f(\E)$ for all $\E\in\shC$. Thus $s_{|\shC}^{*}\in\Fib_{\stX,\shC}^{f}(L)$.
Conversely we must show that, if $L$ is an algebraically closed field
and $\Gamma\in\Fib_{\stX,\shC}(L)$, then $\rk\Gamma_{*}\colon\shC\arr\N$
belongs to $\shI$.

Given a finite subset $J\subseteq\shC$ consider $[\Gamma\circ\omega_{J}^{*}\colon\shD_{J}\to\Loc(L)]\in\Fib_{\shB_{J},\shD_{J}}(L)\simeq\shB_{J}(L)$,
so that there exists a map $\Spec L\arrdi s\shB_{J}$ such that $\Gamma\circ\omega_{J}^{*}\simeq s_{|\shD_{J}}^{*}$.
The map $s$ has image in some component $\shB_{f}$ with $f\in\shI_{J}$.
In particular if $\E\in J$ we have 
\[
\rk\Gamma_{\E}=\rk\Gamma_{\omega_{J}^{*}\shH_{\E,J}}=\rk(s^{*}(\shH_{\E,J})_{|\shB_{f}})=f(\E)
\]
This shows that for all finite subsets $J\subseteq\shC$ we have that
$f_{J}=(\rk\Gamma_{*})_{|J}$ belongs to $\shI_{J}$. By construction
$\stX_{f_{J}}$ is a non-empty (open and) closed substack of $\stX$.
Since $\stX_{f_{J}}\subseteq\stX_{f_{J'}}$ if $J'\subseteq J$ and
$\stX$ is quasi-compact it follows that $\bigcap_{J}\stX_{f_{J}}\neq\emptyset$
and thus that $\rk\Gamma_{*}\in\shI$.
\end{proof}
\begin{lem}
With notations from \ref{subsec:Generalized-frame-bundles} we have
that $D(\E)=D(\shG_{\E})$ and $Q(\E)=Q(\shG_{\E})$. In particular
$D(\shG_{*}\colon\shC\to\Vect(\Fib_{\stX,\shC}))=D(\shC\to\Vect(\stX))$.
\end{lem}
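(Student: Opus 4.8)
The plan is to reduce everything to the single equality $\text{ranks}(\E)=\text{ranks}(\shG_{\E})$ of subsets of $\N$, since both $D(-)$ and $Q(-)$ depend only on the set of ranks occurring at geometric points (see \ref{def:profinite stuff}). Once that equality is established we immediately get $D(\E)=D(\text{ranks}(\E))=D(\text{ranks}(\shG_{\E}))=D(\shG_{\E})$, and identically $Q(\E)=Q(\shG_{\E})$, with no further computation.

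To unwind the left-hand side I would first recall that $\Fib_{\stX,\shC}$ is fibered in groupoids (already established above), so a geometric point $t\colon\Spec L\to\Fib_{\stX,\shC}$ is the same datum as an object $\Gamma\in\Fib_{\stX,\shC}(L)$, and under this identification $t^{*}\shG_{\E}=\shG_{\E}(\Gamma)=\Gamma_{\E}$. Hence $\text{ranks}(\shG_{\E})$ is exactly the set of integers of the form $\rk\Gamma_{\E}$ as $\Gamma$ ranges over fiber functors $\shC\to\Loc(L)$ over all fields $L$.

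For the inclusion $\text{ranks}(\E)\subseteq\text{ranks}(\shG_{\E})$ I would push geometric points of $\stX$ forward along $\shP_{\shC}$: given $\xi\colon\Spec L\to\stX$ with $\rk\xi^{*}\E=n$, the fiber functor $\shP_{\shC}(\xi)=\xi^{*}_{|\shC}\in\Fib_{\stX,\shC}(L)$ satisfies $\shG_{\E}(\xi^{*}_{|\shC})=\xi^{*}\E$, which has rank $n$, so $n\in\text{ranks}(\shG_{\E})$. For the reverse inclusion I would invoke \ref{prop:charcaterization of shI}: given $\Gamma\in\Fib_{\stX,\shC}(L)$, the function $f=\rk\Gamma_{*}$ satisfies $\Gamma\in\Fib_{\stX,\shC}^{f}(L)$, so $\Fib_{\stX,\shC}^{f}\neq\emptyset$ and hence $f\in\shI$. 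By the definition of $\shI$ there is then some $\xi\colon\Spec L'\to\stX$ with $\rk\xi^{*}\E=f(\E)=\rk\Gamma_{\E}$ for every $\E$, so in particular $\rk\Gamma_{\E}\in\text{ranks}(\E)$. This gives $\text{ranks}(\shG_{\E})\subseteq\text{ranks}(\E)$ and completes the equality.

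Finally, for the displayed ``in particular'' I would argue factorwise: by \ref{def:profinite stuff} the scheme attached to a collection is the product over its members, so $D(\shG_{*}\colon\shC\to\Vect(\Fib_{\stX,\shC}))=\prod_{\E\in\shC}D(\shG_{\E})$ and $D(\shC\to\Vect(\stX))=\prod_{\E\in\shC}D(\E)$, and these agree because each factor does. I do not expect a genuine obstacle here: the entire content is packaged in \ref{prop:charcaterization of shI}, and the only points needing care are bookkeeping ones, namely that $\Fib_{\stX,\shC}$ is fibered in groupoids (so geometric points are honest fiber functors) and that the covariant functor $\shP_{\shC}(\xi)=\xi^{*}_{|\shC}$ sends $\E$ to $\xi^{*}\E$, which is what makes the two rank computations match.
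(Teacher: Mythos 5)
Your proof is correct and follows the same route as the paper: reduce to $\text{ranks}(\E)=\text{ranks}(\shG_{\E})$, get the inclusion $\text{ranks}(\E)\subseteq\text{ranks}(\shG_{\E})$ from $\shP_{\shC}^{*}\shG_{\E}\simeq\E$, and get the reverse inclusion by applying \ref{prop:charcaterization of shI} to the rank function $f=\rk\Gamma_{*}$ of a geometric point $\Gamma$ of $\Fib_{\stX,\shC}$. The only difference is that you spell out the bookkeeping (geometric points of $\Fib_{\stX,\shC}$ as fiber functors, and the factorwise identification of the $D(-)$ of a collection) which the paper leaves implicit.
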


\begin{proof}
We have to show that $\text{ranks}(\E)=\text{ranks}(\shG_{\E})$.
Since $\shP_{\shC}^{*}\shG_{\E}\simeq\shE$ we have $\text{ranks}(\E)\subseteq\text{ranks}(\shG_{\E})$.
For the converse, let $n\in\text{ranks}(\shG_{\E})$ and $\xi\colon\Spec L\to\Fib_{\stX,\shC}$
be a point such that $\xi^{*}\shG_{\E}$ has rank $n$. Let $\Gamma\in\Fib_{\stX,\shC}(L)$
be the monoidal functor corresponding to $\xi$, so that $\xi^{*}\shG_{\E}=\Gamma_{\E}$,
and set $f=\rk\Gamma\colon\shC\to\N$, so that $f(\E)=n$. Since $\Gamma\in\Fib_{\stX,\shC}^{f}\neq\emptyset$,
by \ref{prop:charcaterization of shI} we have $f\in\shI$. By definition
of $f$ there exists $t\colon\Spec\Omega\to\stX$ such that $f(\shG)=\rk t^{*}\shG$
for all $\shG\in\shC$. In particular $f(\E)=n=\rk t^{*}\E$. By definition
it follows that $n\in\text{ranks}(\E)$.
\end{proof}
We set $D(\shC)=D(\shG_{*}\colon\shC\to\Vect(\Fib_{\stX,\shC}))=D(\shC\to\Vect(\stX))$.
Since $\stX$ is quasi-compact, by \ref{rem:connected components vector affine}
we have that $D(\shC)=\Spec S$ is affine. By abuse of notation we
denote by $Q(\E)=Q(\shG_{\E})$ the pullback along $D(\shC)\to D(\E)=D(\shG_{\E})$
of the vector bundle denoted by the same symbol. By the above lemma
there is a map $\Fib_{\stX,\shC}\to D(\shC)$. We define the (pseudo)-functor
$\Omega\colon\Aff/S\to(\text{categories})$ by 
\[
\Omega(A)=\{R\text{-linear and strong monoidal functors }\Gamma\colon\shC\to\Vect(A)\text{ with fixed }\Gamma_{\E}\simeq Q(\E)\otimes_{S}S\}
\]

\begin{lem}
\label{lem:the space of non exact functors} The functor $\Omega$
is an affine scheme and $\Fr(\shG_{*}\colon\shC\to\Vect(\Fib_{\stX,\shC}))$
is the locus in $\Omega$ of functors $\Gamma$ which are right exact,
that is $\Gamma\in\Fib_{\stX,\shC}$.
\end{lem}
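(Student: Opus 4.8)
The plan is to handle the two assertions in turn: first represent $\Omega$ as an affine $S$-scheme by writing it as a closed subscheme of an explicit affine scheme of structure constants, and then identify $\Fr(\shG_*)$ with the right-exact locus of $\Omega$ by unwinding the definition of the frame bundle. First note that, since each $\Gamma\in\Omega(A)$ is equipped with a \emph{fixed} identification $\Gamma_\E=Q(\E)\otimes_S A$, the categories $\Omega(A)$ are discrete (a monoidal natural transformation commuting with the fixed identifications is the identity on every object, hence the identity), so $\Omega$ is a sheaf of sets on $\Aff/S$ and it suffices to represent it by an affine scheme over $S=\odi{D(\shC)}$.

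To encode a point of $\Omega(A)$ I would record two blocks of data. The \emph{action on morphisms}: for each pair $(\E,\E')$ of objects of the (small) category $\shC$ an $R$-linear map $\Hom_\shC(\E,\E')\to\Hom_A(Q(\E)\otimes_S A,Q(\E')\otimes_S A)=\Homsh_S(Q(\E),Q(\E'))\otimes_S A$. The \emph{monoidal structure}: for each pair an isomorphism $\iota_{\E,\E'}\in\Isosh_S(Q(\E)\otimes_S Q(\E'),Q(\E\otimes\E'))$, together with a unit in $\Gm(A)$ (recall $Q(\odi{\stX})$ has rank $1$). The decisive representability point is that, for an \emph{arbitrary} $R$-module $M$ and a finite projective $S$-module $N$, the functor $A\mapsto\Hom_R(M,N\otimes_S A)$ on $S$-algebras is represented by an affine $S$-scheme: choosing a free presentation $R^{(J)}\to R^{(I)}\to M\to0$ exhibits it as the equalizer of the two maps between $\prod_I\Spec\Sym_S(N^{\vee})$ and $\prod_J\Spec\Sym_S(N^{\vee})$, and arbitrary products and equalizers of affine $S$-schemes are affine (coproducts of $S$-algebras represent products of schemes). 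This is exactly what lets me handle the modules $\Hom_\shC(\E,\E')$, which need not be finitely generated over $R$. Likewise $\Isosh_S$ of two vector bundles is affine over $S$, being cut out in $\Homsh_S\times\Homsh_S$ by the two identity relations, so the full product of all these Hom- and Isom-schemes over $S$ is affine.

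Then $\Omega$ is the subfunctor of this affine product carved out by the conditions that the data define an $R$-linear strong monoidal functor: functoriality ($\Gamma_{\id}=\id$ and $\Gamma_{\psi\phi}=\Gamma_\psi\Gamma_\phi$ via the bilinear composition of $\shC$), naturality of the $\iota_{\E,\E'}$ in both variables, and the associativity, symmetry and unit axioms of Definition \ref{def: pseudo monoidal commutativi associative}. Each such condition is an equality of morphisms into a separated (affine) $S$-scheme and hence defines a closed subscheme; their possibly infinite intersection is again closed, so $\Omega$ is affine, as claimed.

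For the second assertion I would simply unwind $\Fr(\shG_*\colon\shC\to\Vect(\Fib_{\stX,\shC}))$. By the preceding lemma $D(\shG_\E)=D(\E)$ and $Q(\shG_\E)=Q(\E)$, so pulling $Q(\E)$ back along $\Fib_{\stX,\shC}\to D(\shC)=\Spec S$ and evaluating at $\Gamma\in\Fib_{\stX,\shC}(A)$ yields $Q(\E)\otimes_S A$, while $\shG_\E$ evaluates to $\Gamma_\E$; hence an $A$-point of $\Fr(\shG_\E)=\Isosh(\shG_\E,r_{\shG_\E}^*Q(\shG_\E))$ over $\Gamma$ is precisely an isomorphism $\Gamma_\E\simeq Q(\E)\otimes_S A$. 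Transporting $\Gamma$ along these isomorphisms so that its underlying modules become literally $Q(\E)\otimes_S A$ identifies $\Fr(\shG_*)(A)$ with the set of those $\Gamma\in\Omega(A)$ that moreover lie in $\Fib_{\stX,\shC}(A)$, i.e. that are right exact; this is the asserted identification of $\Fr(\shG_*)$ with the right-exact locus of $\Omega$. Applying \ref{lem:locus of exactness} to the universal complexes obtained from the finite test sequences of $\shC$ moreover exhibits this locus as an intersection of quasi-compact open subschemes of $\Omega$, the form in which it will be used. The only genuinely delicate step is the representability of the morphism block over non-finitely-generated Hom-modules; the monoidal isomorphisms, the coherence equations and the final unwinding are routine once the affine structure scheme is in place.
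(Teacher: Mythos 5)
Your proof is correct and follows essentially the same route as the paper: both realize $\Omega$ as a closed subfunctor of a large affine product of Hom- and Isom-schemes of the bundles $Q(\E)$ over $D(\shC)$, cut out by the (closed) linearity, functoriality and coherence conditions, and both identify $\Fr(\shG_{*})$ with the right-exact locus by simply unwinding the definition of the frame bundle. The only cosmetic difference is bookkeeping: the paper indexes its affine product by the individual morphisms $\E\to\E'$ of the (small) category $\shC$ and imposes $R$-linearity afterwards as closed conditions, whereas you package each Hom-module at once, using the representability of $A\mapsto\Hom_{R}(M,N\otimes_{S}A)$ via a free presentation of $M$.
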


\begin{proof}
By definition $\Fr(\shG_{*}\colon\shC\to\Vect(\Fib_{\stX,\shC}))(A)$
is the groupoid of $\xi\in\Fib_{\stX,\shC}(A)$ together with isomorphisms
$\xi^{*}\shG_{\E}\simeq Q(\E)\otimes_{S}A$. Since objects $\xi$
correspond to $R$-linear and strong monoidal functors $\Gamma\colon\shC\to\Vect(A)$
which are right exact and $\xi^{*}\shG_{\E}\simeq\Gamma_{\E}$, the
second part of the statement is clear.

For the first one, notice that, since the $Q(\E)$ are vector bundles
on $D(\shC)$, the sheaves
\[
\Homsh_{D(\shC)}(Q(\E),Q(\E'))\comma\Isosh_{D(\shC)}(Q(\E)\otimes Q(\E'),Q(\E\otimes\E'))\comma\Isosh(\odi S,Q(\odi{\stX}))
\]
are all affine schemes for $\E,\E'\in\shC$. In particular 
\[
\overline{\Omega}=(\prod_{\E\to\E'}\Homsh_{D(\shC)}(Q(\E),Q(\E')))\times(\prod_{\E,\E'}\Isosh_{D(\shC)}(Q(\E)\otimes Q(\E'),Q(\E\otimes\E')))\times(\Isosh(\odi S,Q(\odi{\stX})))
\]
where the product is taken over $D(\shC)$, is a very big affine scheme
and $\Omega$ is a subfunctor of $\overline{\Omega}=\Spec T$.

Set $Q(\E)\otimes_{S}T=Q(\E)_{T}$. By construction there are maps
$Q(\E)_{T}\to Q(\E')_{T}$ for any map $\E\to\E'$ in $\shC$, isomorphisms
$Q(\E)_{T}\otimes Q(\E')_{T}\to Q(\E\otimes\E')_{T}$ for all $\E,\E'\in\shC$
and an isomorphism $\odi{\Omega}\to Q(\odi{\stX})_{T}$. We have that
$\Omega$ is the locus in $\overline{\Omega}$ where the association
$\E\mapsto Q(\E)_{T}$ defines an $R$-linear strong monoidal functor.
All these condition can be expressed by the vanishing of a map of
vector bundles, which are closed relations. For example the locus
where the association $Q(-)_{T}$ is symmetric is the locus where
the difference between $Q(\E)_{T}\otimes Q(\E')_{T}\to Q(\E\otimes\E')_{T}\to Q(\E'\otimes\E)$
and $Q(\E)_{T}\otimes Q(\E')_{T}\to Q(\E')_{T}\otimes Q(\E)_{T}\to Q(\E'\otimes\E')_{T}$
vanishes.

In conclusion $\Omega$ is a closed subscheme of $\overline{\Omega}$
and it is therefore affine.
\end{proof}
We now assume the notation from \ref{thm:when FibX,C has the resolution property},
$2)$. In particular we set $G=\prod_{\E}\GL(Q(\E))\to D(\shC)$.
\begin{lem}
\label{lem:general Fib pseudo-affine} The functor $\Fib_{\stX,\shC}\to\Bi G$
is pseudo-affine and, in particular, $\Fr(\shG_{*}\colon\shC\to\Vect(\Fib_{\stX,\shC}))$
is pseudo-affine. If moreover the hypothesis of \ref{thm:when FibX,C has the resolution property},
$5)$ are met then the previous map and space are quasi-affine.
\end{lem}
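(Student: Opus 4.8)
The plan is to establish pseudo-affineness for the scheme $U=\Fr(\shG_{*}\colon\shC\to\Vect(\Fib_{\stX,\shC}))$ first, and then to transport it to the morphism $\Fib_{\stX,\shC}\to\Bi G$ by fpqc descent along the canonical atlas $D(\shC)\to\Bi G$. By \ref{lem:the space of non exact functors} the scheme $U$ is cut out inside the affine scheme $\Omega=\Spec T$ as the locus where the universal strong monoidal functor $\E\mapsto Q(\E)_{T}$ is right exact. First I would observe that this locus is an intersection of quasi-compact open subschemes of $\Omega$. Indeed, for each finite test sequence $\shT_{*}$ of $\shC$ the universal functor produces a bounded complex $\shW_{*}$ of vector bundles on $\Omega$, and by \ref{lem:locus of exactness} the locus $\Omega_{\shT_{*}}\subseteq\Omega$ where $\shW_{*}$ is exact in the relevant degrees is a quasi-compact open immersion. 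Since right exactness of the universal functor means exactness on every finite test sequence, one has $U=\bigcap_{\shT_{*}}\Omega_{\shT_{*}}$, an intersection of quasi-compact opens of the affine scheme $\Omega$; by \ref{prop:arbitrary intersection of quasi-compact open is pseudo-affine} the scheme $U$ is pseudo-affine.

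Next I would upgrade this to the morphism. As $D(\shC)=\Spec S$ is affine, \ref{rem:absolute relative pseudo-affine} shows that $U\to D(\shC)$ is a pseudo-affine morphism. By \ref{lem:frames} there is a Cartesian square exhibiting $U$ as $\Fib_{\stX,\shC}\times_{\Bi G}D(\shC)$, and the atlas $D(\shC)\to\Bi G$ is a representable fpqc covering because $G\to D(\shC)$ is affine and faithfully flat. Since $U$ is a pseudo-affine scheme, $\Fib_{\stX,\shC}$ acquires a scheme atlas and is in particular quasi-compact and pseudo-algebraic, so the fpqc descent of pseudo-affineness \ref{rem:fpqc descent of pseudo-affine} applies to this square and yields that $\Fib_{\stX,\shC}\to\Bi G$ is pseudo-affine. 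The ``in particular'' assertion is then immediate, since base changing a pseudo-affine morphism along the affine scheme $D(\shC)$ recovers $U$.

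For the quasi-affine statement, I would invoke the hypotheses of \ref{thm:when FibX,C has the resolution property}, $5)$: there are only finitely many finite test sequences such that exactness on them forces exactness on all test sequences for any $R$-linear strong monoidal functor. Consequently $U=\bigcap_{\shT_{*}}\Omega_{\shT_{*}}$ is a \emph{finite} intersection of quasi-compact open subschemes of the affine scheme $\Omega$; as $\Omega$ is affine, hence quasi-separated, this intersection is again a quasi-compact open subscheme of $\Omega$, and a quasi-compact open of an affine scheme is quasi-affine. Thus $U$ is quasi-affine and $U\to D(\shC)$ is a quasi-affine morphism; descending along the fpqc covering $D(\shC)\to\Bi G$ exactly as before, and using that quasi-affineness of morphisms is fpqc-local on the base, gives that $\Fib_{\stX,\shC}\to\Bi G$ is quasi-affine.

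The step requiring the most care is the identification of each exactness locus $\Omega_{\shT_{*}}$ as a quasi-compact open of $\Omega$: one must apply the universal functor $Q(-)_{T}$ to a test sequence to build the complex $\shW_{*}$ of vector bundles on $\Omega$ and then feed it into \ref{lem:locus of exactness}, while keeping careful track --- via \ref{lem:the space of non exact functors} --- that the defining condition for $U$ inside $\Omega$ is precisely right exactness. Once this is in place, both descent steps are formal consequences of \ref{rem:fpqc descent of pseudo-affine} and of the fpqc locality of quasi-affineness.
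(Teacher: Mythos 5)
Your proposal is correct and follows essentially the same route as the paper: realize $U=\Fr(\shG_{*})$ inside the affine scheme $\Omega$ via \ref{lem:the space of non exact functors}, identify it as the intersection of the quasi-compact open exactness loci $\Omega_{\shT}$ using \ref{lem:locus of exactness}, conclude pseudo-affineness by \ref{prop:arbitrary intersection of quasi-compact open is pseudo-affine} (finite intersection, hence quasi-affine, under the hypotheses of \ref{thm:when FibX,C has the resolution property}, $5)$), and transfer to the morphism $\Fib_{\stX,\shC}\to\Bi G$ via \ref{rem:absolute relative pseudo-affine} and \ref{rem:fpqc descent of pseudo-affine}. The only cosmetic slip is calling $U$ a pseudo-affine \emph{scheme} --- it is a priori only a pseudo-affine sheaf --- but since such a sheaf admits an affine fpqc atlas this does not affect the descent step.
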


\begin{proof}
By \ref{rem:absolute relative pseudo-affine} and \ref{rem:fpqc descent of pseudo-affine}
it is enough to prove that $U=\Fr(\shG_{*}\colon\shC\to\Vect(\Fib_{\stX,\shC}))$
is pseudo-affine. We are going to use \ref{lem:the space of non exact functors}.
Set $\Omega=\Spec B$ and denote by $\Gamma\colon\shC\to\Vect(B)$
the tautological $R$-linear and strong monoidal functor. Given a
finite test sequence $\shT$ in $\shC$ the sequence of maps $\Gamma_{\shT}$
is a complex of free $B$-modules and denote by $\Omega_{\shT}$ the
locus in $\Omega$ where this complex is exact. By \ref{lem:locus of exactness}
the locus $\Omega_{\shT}$ is a quasi-compact open subset of $\Omega$.
Moreover $U$ is the intersection of all the $\Omega_{\shT}$ for
all finite test sequences $\shT$. The fact that $U$ is pseudo-affine
follows from \ref{prop:arbitrary intersection of quasi-compact open is pseudo-affine}.
In the hypothesis of \ref{thm:when FibX,C has the resolution property},
$5)$ only finitely many test sequences are needed and therefore $U$
is a finite intersection of the $\Omega_{\shT}$, hence it is quasi-affine.
\end{proof}
Set $\shD=\{\shG_{\E}\}_{\E\in\shC}\subseteq\Loc(\Fib_{\stX,\shC})$
and let $\shC'$ be the subcategory of $\Loc\Bi G$ obtained by taking
tensor products of the sheaves considered in \ref{lem:generators for GLn}
with respect to the map $\shC\to\Vect(D(\shC))$, $\E\mapsto Q(\E)$.
By \ref{lem:generators for GLn} the category $\shC'$ generates $\QCoh(\Bi G)$.
Set also $\omega\colon\Fib_{\stX,\shC}\to\Bi G$.
\begin{lem}
We have $\omega^{*}\shC'\subseteq\shD$ and, in particular, $\shD$
generates $\QCoh(\Fib_{\stX,\shC})$.
\end{lem}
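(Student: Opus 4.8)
The plan is to establish the containment $\omega^{*}\shC'\subseteq\shD$ first and then deduce the generation statement from it. Recall that $\omega\colon\Fib_{\stX,\shC}\to\Bi G$ is the map induced by the collection $(\shG_{\E})_{\E\in\shC}$ of vector bundles on $\Fib_{\stX,\shC}$ via \ref{prop:generalized GLn}, so that, denoting by $\shF_{\E}$ the universal locally free sheaf on $\Bi G$ coming from the factor $\GL(Q(\E))$, the Cartesian description in \ref{lem:frames} gives $\omega^{*}\shF_{\E}\simeq\shG_{\E}$. Since pullback commutes with $\otimes$, $-^{\vee}$, $\Sym^{m}$ and $\Lambda^{m}$, it suffices to check that the pullback of each generator appearing in the relevant set $\shR_{i}$ of \ref{lem:generators for GLn} lies in $\shD$, and then to use that $\shG_{*}$ is strong monoidal to handle the tensor products that define the objects of $\shC'$.

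The key observation is that we may choose the generating set so that $\det$ never appears, which is essential because $\det\E$ need not belong to $\shC$. Concretely, when $R$ is a $\Q$-algebra I would use $\shR_{3}=\{\shF_{\E},\shF_{\E}^{\vee}\}$: here $\omega^{*}\shF_{\E}\simeq\shG_{\E}\in\shD$ and, since $\shC$ has duals, $\omega^{*}(\shF_{\E}^{\vee})\simeq\shG_{\E}^{\vee}\simeq\shG_{\E^{\vee}}\in\shD$ by \ref{lem:shG and operations}. When $R$ is not a $\Q$-algebra I would use $\shR_{2}=\{\shF_{\E},(\Sym^{m}\shF_{\E})^{\vee}\}$: using the standing hypothesis $\Sym^{m}\E\in\shC$ together with \ref{lem:shG and operations} one gets $\omega^{*}((\Sym^{m}\shF_{\E})^{\vee})\simeq(\Sym^{m}\shG_{\E})^{\vee}\simeq(\shG_{\Sym^{m}\E})^{\vee}\simeq\shG_{(\Sym^{m}\E)^{\vee}}\in\shD$, again invoking that $\shC$ has duals. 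In either case every generator pulls back to some $\shG_{\F}$ with $\F\in\shC$; a general object of $\shC'$ is a tensor product of such generators, and, $\shG_{*}$ being strong monoidal and $\shC$ being monoidal, its pullback is $\shG_{\F_{1}}\otimes\cdots\otimes\shG_{\F_{k}}\simeq\shG_{\F_{1}\otimes\cdots\otimes\F_{k}}\in\shD$. This yields $\omega^{*}\shC'\subseteq\shD$.

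For the generation statement I would combine two facts already available: by \ref{lem:generators for GLn} the category $\shC'$ generates $\QCoh(\Bi G)$, and by \ref{lem:general Fib pseudo-affine} the map $\omega$ is pseudo-affine. Then \ref{rem: generating subcategory under pseudo-affine maps} applies and shows that $\omega^{*}\shC'$ generates $\QCoh(\Fib_{\stX,\shC})$; since $\omega^{*}\shC'\subseteq\shD$, the larger subcategory $\shD$ generates $\QCoh(\Fib_{\stX,\shC})$ as well.

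The only delicate point is the bookkeeping in the first two paragraphs: one must keep track of which of the operations $-^{\vee}$ and $\Sym^{m}$ preserve membership in $\shC$ (hence, via \ref{lem:shG and operations}, membership of the image in $\shD$), and in particular one must be careful to select $\shR_{2}$ or $\shR_{3}$ rather than $\shR_{1}$ so as never to require $\det\E\in\shC$. Everything else is formal.
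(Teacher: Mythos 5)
Your proposal is correct and follows essentially the same route as the paper: pull back the tautological generators $\shF_{\E}$, $\shF_{\E}^{\vee}$, $(\Sym^{m}\shF_{\E})^{\vee}$ via \ref{lem:shG and operations} (using duals in $\shC$, and $\Sym^{n}\E\in\shC$ when $R$ is not a $\Q$-algebra), handle tensor products by strong monoidality of $\shG_{*}$, and deduce generation from \ref{lem:generators for GLn}, the pseudo-affineness of $\omega$ from \ref{lem:general Fib pseudo-affine}, and \ref{rem: generating subcategory under pseudo-affine maps}. Your explicit remark that one must work with $\shR_{2}$ or $\shR_{3}$ rather than $\shR_{1}$, since $\det\E$ need not lie in $\shC$, is a correct point that the paper leaves implicit.
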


\begin{proof}
The second statement follows from the first using \ref{rem: generating subcategory under pseudo-affine maps}
and the fact that $\omega$ is pseudo-affine. For the first, denote
by $\shF_{\E}$ the tautological sheaves defined over $\Bi G$ as
in \ref{lem:generators for GLn}. By construction $\omega^{*}\shF_{\E}\simeq\shG_{\E}$.
Using \ref{lem:shG and operations} we have $\shG_{\duale{\E}}\simeq\duale{(\shG_{\E})}\simeq g^{*}(\shF_{\E})^{\vee}$
and, if $\Sym^{n}\E\in\shC$, $\shG_{\Sym^{n}\E}\simeq\Sym^{n}\shG_{\E}\simeq g^{*}(\Sym^{n}\shF_{\E})$
over $\Fib_{\stX,\shC}$, which concludes the proof.
\end{proof}
We now deal with the case of $\Fib_{\stX,\shC}^{f}$ for $f\in\shI$.
The map $\Spec R\to D(\shC)$ defined in \ref{thm:when FibX,C has the resolution property},
$4)$ is a flat closed immersion thanks to \ref{lem:rank loci}. Moreover
it is easy to check that $G\times_{D(\shC)}R=\GL_{f}$ and therefore
$(\Bi_{D(\shC)}G)\times_{D(\shC)}R=\Bi\GL_{f}$. Moreover by construction
$\Fib_{\stX,\shC}^{f}$ is the locus of $\Fib_{\stX,\shC}$ where
all the $\shG_{\E}$ have rank $f(\E)$. Hence $\Fib_{\stX,\shC}^{f}=\Fib_{\stX,\shC}\times_{D(\shC)}R$.
Taking into account \ref{rem: generating subcategory under pseudo-affine maps}
we obtain:
\begin{prop}
Claims in Theorem \ref{thm:when FibX,C has the resolution property},
$4)$ hold.
\end{prop}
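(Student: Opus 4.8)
The plan is to obtain all the claims of $4)$ by base change along $\Spec R\to D(\shC)$, using the three facts recorded just above the statement: that $\Spec R\to D(\shC)$ is a flat closed immersion (by \ref{lem:rank loci}), that $\Bi\GL_f=\Bi G\times_{D(\shC)}R$, and that $\Fib_{\stX,\shC}^{f}=\Fib_{\stX,\shC}\times_{D(\shC)}R$. First I would note that the two squares in the statement are Cartesian: the right-hand square is the identity $\Bi\GL_f=\Bi G\times_{D(\shC)}R$, while the left-hand square follows from
\[
\Fib_{\stX,\shC}\times_{\Bi G}\Bi\GL_f=\Fib_{\stX,\shC}\times_{\Bi G}(\Bi G\times_{D(\shC)}R)=\Fib_{\stX,\shC}\times_{D(\shC)}R=\Fib_{\stX,\shC}^{f}.
\]

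Next I would verify the properties of the maps. Since flat closed immersions are stable under base change, from $\Spec R\to D(\shC)$ being a flat closed immersion the two vertical arrows $\Bi\GL_f\to\Bi G$ and $\Fib_{\stX,\shC}^{f}\to\Fib_{\stX,\shC}$ are flat closed immersions. The horizontal map $\omega\colon\Fib_{\stX,\shC}^{f}\to\Bi\GL_f$ is the base change of $\omega\colon\Fib_{\stX,\shC}\to\Bi G$ along $\Bi\GL_f\to\Bi G$; the latter is pseudo-affine by \ref{lem:general Fib pseudo-affine}, and pseudo-affineness of a map is preserved under arbitrary base change directly from its definition, so $\omega\colon\Fib_{\stX,\shC}^{f}\to\Bi\GL_f$ is pseudo-affine.

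For the frame bundle I would use that over $\Fib_{\stX,\shC}^{f}$ every $\shG_\E$ has constant rank $f(\E)$, so that $D(\shG_*\colon\shC\to\Vect(\Fib_{\stX,\shC}^{f}))=\Spec R$ and \ref{lem:frames} presents $U=\Fr(\shG_*\colon\shC\to\Vect(\Fib_{\stX,\shC}^{f}))$ as the Cartesian product $\Fib_{\stX,\shC}^{f}\times_{\Bi\GL_f}\Spec R$. As $\Spec R$ is affine, pseudo-affineness of $\omega$ makes $U$ pseudo-affine; and since $U$ is the pullback of the canonical atlas $\Spec R\to\Bi\GL_f$, it is a $\GL_f$-torsor over $\Fib_{\stX,\shC}^{f}$, giving $\Fib_{\stX,\shC}^{f}\simeq[U/\GL_f]$.

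Finally, the closed immersion $\Fib_{\stX,\shC}^{f}\to\Fib_{\stX,\shC}$ is affine, hence pseudo-affine, so \ref{rem: generating subcategory under pseudo-affine maps} shows that the pullback of the generating family $\{\shG_\E\}$ of $\QCoh(\Fib_{\stX,\shC})$ from \ref{thm:when FibX,C has the resolution property}, $3)$ again generates $\QCoh(\Fib_{\stX,\shC}^{f})$. Then the presentation $\Fib_{\stX,\shC}^{f}\simeq[U/\GL_f]$ with $U$ pseudo-affine (hence quasi-compact and with affine diagonal) and $\GL_f$ affine yields that $\Fib_{\stX,\shC}^{f}$ is a quasi-compact stack with affine diagonal, fibered in groupoids because it is a substack of $\Fib_{\stX,\shC}$, and with the resolution property via the $\shG_\E$. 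The one step deserving care is the collapse of the rank function over $\Fib_{\stX,\shC}^{f}$, which is what replaces the relative group $G$ over $D(\shC)$ by the honest group $\GL_f$ over $\Spec R$ and makes $U\to\Fib_{\stX,\shC}^{f}$ a $\GL_f$-torsor; once this is pinned down, the rest is formal base-change bookkeeping.
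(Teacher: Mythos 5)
Your proposal is correct and follows essentially the same route as the paper: the paper's own argument is precisely the base-change bookkeeping along the flat closed immersion $\Spec R\to D(\shC)$ (via \ref{lem:rank loci}), the identifications $G\times_{D(\shC)}R=\GL_{f}$ and $\Fib_{\stX,\shC}^{f}=\Fib_{\stX,\shC}\times_{D(\shC)}R$, and an appeal to \ref{rem: generating subcategory under pseudo-affine maps} for the generation claim. You merely spell out a few steps the paper leaves implicit (the Cartesianness of the left square, stability of pseudo-affine maps under base change, and the torsor presentation $U=\Fib_{\stX,\shC}^{f}\times_{\Bi\GL_{f}}\Spec R$), all of which are sound.
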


It remains to show Theorem \ref{thm:when FibX,C has the resolution property},
$5)$. So we assume its hypothesis. We have already shown that $U=\Fr(\shG_{*}\colon\shC\to\Vect(\Fib_{\stX,\shC}))$
is quasi-affine in \ref{lem:general Fib pseudo-affine}. For $T\subseteq\shC$
set $G_{T}=\prod_{\E\in T}\GL(Q(\E))\to D(\shC)$. In particular $G=G_{\shC}=G_{J}\times G_{\shC-J}$
and $\Bi G=\Bi G_{J}\times\Bi G_{\shC-J}$. Consider the Cartesian
diagram     \[   \begin{tikzpicture}[xscale=2.2,yscale=-1.2]     
\node (A0_0) at (0, 0) {$U$};     
\node (A0_1) at (1, 0) {$D(\shC)$};     
\node (A1_0) at (0, 1) {$U'$};     
\node (A1_1) at (1, 1) {$\Bi G_{\shC-J}$};     
\node (A1_2) at (2, 1) {$D(\shC)$};     
\node (A2_0) at (0, 2) {$\Fib_{\stX,\shC}$};     
\node (A2_1) at (1, 2) {$\Bi G$};     
\node (A2_2) at (2, 2) {$\Bi G_J$};     
\path (A2_1) edge [->]node [auto] {$\scriptstyle{}$} (A2_2);     \path (A0_0) edge [->]node [auto] {$\scriptstyle{}$} (A0_1);     \path (A2_0) edge [->]node [auto] {$\scriptstyle{}$} (A2_1);     \path (A1_0) edge [->]node [auto] {$\scriptstyle{}$} (A1_1);     \path (A1_1) edge [->]node [auto] {$\scriptstyle{}$} (A1_2);     \path (A1_0) edge [->]node [auto] {$\scriptstyle{\alpha}$} (A2_0);     \path (A1_1) edge [->]node [auto] {$\scriptstyle{}$} (A2_1);     \path (A0_0) edge [->]node [auto] {$\scriptstyle{}$} (A1_0);     \path (A0_1) edge [->]node [auto] {$\scriptstyle{}$} (A1_1);     \path (A1_2) edge [->]node [auto] {$\scriptstyle{}$} (A2_2);   \end{tikzpicture}   \] 
\begin{lem}
The map $U\to U'$ has a section and, in particular, $U'$ is a closed
subscheme of $U$, hence quasi-affine.
\end{lem}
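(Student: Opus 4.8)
The plan is to read off the section from the top square of the displayed diagram. Since that square is Cartesian, $U = U'\times_{\Bi G_{\shC-J}} D(\shC)$, where $D(\shC)\to\Bi G_{\shC-J}$ is the map classifying the trivial $G_{\shC-J}$-torsor. By the universal property of this fibre product, a section of $U\to U'$ over $U'$ is the same datum as a trivialisation of the $G_{\shC-J}$-torsor $P$ on $U'$ classified by the map $U'\to\Bi G_{\shC-J}$. The crucial simplification is that $G_{\shC-J}=\prod_{\E\in\shC-J}\GL(Q(\E))$, hence $\Bi G_{\shC-J}=\prod_{\E}\Bi\GL(Q(\E))$ and $P=\prod_{\E\in\shC-J}\Fr(\shG_\E|_{U'})$ is a product of individual frame bundles. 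Therefore trivialising $P$ amounts to trivialising each $\shG_\E|_{U'}$ separately, with \emph{no} compatibility required between distinct objects $\E$.

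So the task reduces to the following: on $U'$ one is given trivialisations $\shG_\E|_{U'}\simeq Q(\E)|_{U'}$ for $\E\in J$ (this is exactly the data packaged by $U'=\Fib_{\stX,\shC}\times_{\Bi G_J} D(\shC)$), and one must produce such a trivialisation for every $\E\in\shC$. Here I would invoke the hypothesis of part $5)$: every $\E\in\shC$ is obtained from objects of $J$ by finitely many applications of $\otimes$, $\oplus$, $-^\vee$, $\Sym^m$ and $\Lambda^m$. For each $\E$ I fix one such expression and propagate the trivialisation along it, using that $\shG_*$ is strong monoidal and additive together with the canonical isomorphisms $\shG_{\E^\vee}\simeq(\shG_\E)^\vee$, $\shG_{\Sym^m\E}\simeq\Sym^m\shG_\E$, $\shG_{\Lambda^m\E}\simeq\Lambda^m\shG_\E$ from \ref{lem:shG and operations}, matched with the evident canonical isomorphisms of the standard bundles $Q(\E_1\otimes\E_2)\simeq Q(\E_1)\otimes Q(\E_2)$, $Q(\E^\vee)\simeq Q(\E)^\vee$, $Q(\Sym^m\E)\simeq\Sym^m Q(\E)$, and so on. Composing these along the chosen expression yields the desired $\shG_\E|_{U'}\simeq Q(\E)|_{U'}$. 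Each such isomorphism is assembled from fixed canonical isomorphisms that are natural in the $U'$-point, so it defines a genuine morphism (a section of the corresponding frame bundle); assembling them over $\E\in\shC-J$ gives the trivialisation of $P$, i.e. a morphism $\sigma\colon U'\to U$, and restricting back to $J$ recovers the given trivialisations, so $\sigma$ is a section of $U\to U'$.

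For the ``in particular'' part I would argue as follows. By \ref{lem:general Fib pseudo-affine} (under the hypotheses of part $5)$) the scheme $U$ is quasi-affine, hence separated over $R$, and therefore $U\to U'$ is separated. A section of a separated morphism is a closed immersion, so $\sigma$ exhibits $U'$ as a closed subscheme of $U$; being a closed (hence locally closed) subscheme of the quasi-affine scheme $U$, the scheme $U'$ is itself quasi-affine. The only point that requires care is the reduction in the first paragraph: once $P$ is seen to be a product over the individual objects of $\shC$, trivialising it demands no coherence among different $\E$, so the per-object propagation of the second paragraph—carried out via an arbitrarily fixed expression in the generators—already suffices. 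Without this observation one would have to worry that two expressions of the same $\E$ produce different trivialisations, but the product structure of the torsor makes that worry vacuous.
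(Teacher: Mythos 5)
Correct, and essentially the paper's own argument: the paper likewise reduces to trivializing each $\alpha^{*}\shG_{\E}$ separately over $U'$ (the product structure of the torsor making any compatibility between different $\E$ irrelevant), and shows that the set $\shS$ of objects $\E$ with $\alpha^{*}\shG_{\E}$ free contains $J$ and is closed under $\otimes$, $\oplus$, $-^{\vee}$, $\Sym^{m}$, $\Lambda^{m}$ via \ref{lem:shG and operations}, which is exactly your propagation-along-an-expression argument in set form. The only cosmetic difference is the last step: the paper deduces that the section is a closed immersion from the fact that the $G_{\shC-J}$-torsor $U\to U'$ is affine, whereas you use separatedness of the quasi-affine scheme $U$; both are valid.
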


\begin{proof}
We need to show that the torsor $U'\to\Bi G_{\shC-J}$ is trivial.
For this, it is enough to show that $U'\to\Bi G$ is trivial. This
last map is given by the collection $(\alpha^{*}\shG_{\E})_{\E\in\shC}$
and therefore we must show that all those sheaves are free.

Let $\shS\subseteq\shC$ be the subset of $\E$ such that $\alpha^{*}\shG_{\E}$
is free. By construction $J\subseteq\shS$. On the other hand in \ref{lem:shG and operations}
we have seen that if $\E\in\shC$ is such that $t(\E)\in\shC$ then
$\shG_{t(\E)}\simeq t(\shG_{\E})$ for any of the following operations
of sheaves $t=-^{\vee}$, $\Sym^{m}$, $\Lambda^{m}$. In particular
$\alpha^{*}\shG_{t(\E)}\simeq t(\alpha^{*}\shG_{\E})$ and therefore
$\E\in\shS$ implies $t(\E)\in\shS.$ By a direct check it is also
clear that $\shG_{\E\otimes\E'}\simeq\shG_{\E}\otimes\shG_{\E'}$
for $\E,\E'\in\shC$ and, if $\E\oplus\E'\in\shC$, that $\shG_{\E\oplus\shE'}\simeq\shG_{\E}\oplus\shG_{\E'}$.
Thus $\shS$ is closed under tensor product and, when it makes sense,
under direct sum.

Using the fist condition on $\shC$ we can conclude that $\shS=\shC$.
Since $U\to U'$ is a $G_{\shC-J}$-torsor and therefore affine, the
existence of a section tell us that $U'$ is a closed subscheme of
$U$ and therefore is quasi-affine.
\end{proof}
As a consequence of the previous result we have that $\Fib_{\stX,\shC}\simeq[U'/G_{J}]$
with $U'$ quasi-affine. We want to show that $\Fib_{\stX,\shC}\simeq[V/\GL_{n}]$
for some quasi-affine scheme $V$. Since $J$ is finite, there is
a finite decomposition of $D(\shC)=\sqcup_{i}D_{i}$ into open and
closed subsets such that $Q(\E)_{|U_{i}}$ has constant rank for $\E\in J$.
In particular we have that $(G_{J})_{|U_{i}}$ is a finite product
of $\GL_{m}$'s. The following lemma concludes the proof of Theorem
\ref{thm:when FibX,C has the resolution property}, $5)$ and therefore
of Theorem \ref{thm:when FibX,C has the resolution property} itself.
\begin{lem}
Let $m_{1},\dots,m_{h}\in\N$ and $n>m_{1}+\cdots+m_{h}$. Then there
is an injective map
\[
G=\GL_{m_{1}}\times\cdots\times\GL_{m_{h}}\to\GL_{n}
\]
over $\Z$ such that $\Bi G\to\Bi\GL_{n}$ is quasi-affine. In particular
for any quasi-affine scheme $U$ with an action of $G$ one can write
\[
[U/G]\simeq[V/\GL_{n}]
\]
for some quasi-affine scheme $V$ with an action of $\GL_{n}$.
\end{lem}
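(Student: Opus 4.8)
The plan is to prove the statement about $\Bi G\to\Bi\GL_n$ first, and then deduce the quotient description by a formal descent argument. \emph{Reduction of the second assertion.} Suppose we already have a closed embedding $G\hookrightarrow\GL_n$ with $\Bi G\to\Bi\GL_n$ quasi-affine. If $U$ is a quasi-affine $G$-scheme, then $[U/G]\to\Bi G$ is representable and quasi-affine, since its base change along the atlas $\Spec\Z\to\Bi G$ is $U\to\Spec\Z$. Hence the composite $[U/G]\to\Bi G\to\Bi\GL_n$ is quasi-affine as well, and setting $V=[U/G]\times_{\Bi\GL_n}\Spec\Z$ (base change along the universal torsor $\Spec\Z\to\Bi\GL_n$) we obtain that $V$ is quasi-affine over $\Spec\Z$, hence a quasi-affine scheme. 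By construction $V\to[U/G]$ is the pullback of $\Spec\Z\to\Bi\GL_n$, so it is a $\GL_n$-torsor and $[U/G]\simeq[V/\GL_n]$, as required.

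\emph{Construction of the embedding.} Write $m=m_1+\cdots+m_h$ and $r=n-m\ge 1$; the strict inequality provides the extra coordinates used below. I would decompose $\Z^n=W_1\oplus\cdots\oplus W_h\oplus Z$ with $\rk W_i=m_i$ and $\rk Z=r$, embed $G=\prod_i\GL(W_i)$ block-diagonally, acting as the identity on $Z$ (a closed immersion over $\Z$), and consider the $\GL_n$-representation $W=\End(\Z^n)^{\oplus h}\oplus(\Z^n)^{\oplus r}$, with conjugation on the first summands and the standard action on the last ones. Taking $w_0=(p_1,\dots,p_h,z_1,\dots,z_r)$, where $p_i\in\End(\Z^n)$ is the projector onto $W_i$ and $z_1,\dots,z_r$ is a basis of $Z$, a direct check on $R$-points shows that the scheme-theoretic stabiliser $\Stab_{\GL_n}(w_0)$ is exactly $G$: commuting with all the $p_i$ forces an element to be block-diagonal, and fixing $z_1,\dots,z_r$ kills the remaining $\GL(Z)$-block. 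The orbit $O=\GL_n\cdot w_0$ is the set of tuples $(q_1,\dots,q_h,v_1,\dots,v_r)$ with the $q_i$ orthogonal idempotents of rank $m_i$ and $v_1,\dots,v_r$ a basis of $\Imm(1-\sum_i q_i)$; idempotency, orthogonality and the relations $(1-\sum_i q_i)v_j=v_j$ are closed conditions, while the rank conditions and ``$v_\bullet$ is a basis'' are open ones. Thus $O$ is a locally closed subscheme of the affine space $W$, hence quasi-affine, in a manner independent of the characteristic.

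\emph{Identification and conclusion.} The orbit map $\GL_n\to O$ is $G$-invariant, and the remaining point is that it realises $O$ as the fppf quotient $\GL_n/G$. Granting this, base change of $\Bi G\to\Bi\GL_n$ along the atlas gives $\Bi G\times_{\Bi\GL_n}\Spec\Z\simeq\GL_n/G\simeq O$, which is quasi-affine; since quasi-affineness descends along the fppf covering $\Spec\Z\to\Bi\GL_n$, the map $\Bi G\to\Bi\GL_n$ is quasi-affine, which finishes the first assertion. \emph{The hard part} is precisely this identification over $\Z$: one must show that $\GL_n\to O$ is faithfully flat, so that $O$ represents $\GL_n/G$ and the fibres are $G$-torsors. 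Over a field this is the classical theory of orbits of a smooth group action; over $\Z$ I would reduce to it by noting that $G$ is smooth over $\Z$, that both $\GL_n$ and $O$ are smooth over $\Z$ with $\GL_n\to O$ of constant relative dimension $\dim G$, and then invoking miracle flatness (equivalently, the existence of the quotient $\GL_n/G$ as a scheme for a smooth closed subgroup, as in SGA\,3). The verification that $O$ is smooth over $\Z$ and the flatness of the orbit map are where the genuine work lies; everything else is routine.
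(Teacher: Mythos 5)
Your proposal is correct in substance but proves the key point by a genuinely different route. The paper never leaves its own framework: it takes the same block-diagonal embedding, lets $W\to\Bi G$ be the associated $\GL_{n}$-torsor, and shows $W$ is quasi-affine by proving that the sheaves pulled back from $\Bi\GL_{n}$ generate $\QCoh(\Bi G)$ --- via the split surjections $\shH\to\shF_{i}$ and $(\Sym^{q}\shH)^{\vee}\to(\Sym^{q}\shF_{i})^{\vee}$ together with \ref{lem:generators for GLn} --- so that $W$ is pseudo-affine by \ref{rem: generating subcategory under pseudo-affine maps} and hence quasi-affine by \ref{cor:pseudo-affine algebraic is quasi-affine}. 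You instead identify $\Bi G\times_{\Bi\GL_{n}}\Spec\Z$ with the fppf quotient $\GL_{n}/G$, exhibit it as the locally closed orbit $O$ of $w_{0}=(p_{1},\dots,p_{h},z_{1},\dots,z_{r})$ in the representation $\End(\Z^{n})^{\oplus h}\oplus(\Z^{n})^{\oplus r}$, and descend quasi-affineness along the covering $\Spec\Z\to\Bi\GL_{n}$. This is the classical orbit--stabilizer argument; it is independent of the pseudo-affine theory developed earlier in the paper (whose use is precisely the point of the paper's proof) and it yields the concrete model $\GL_{n}/G\simeq O$. Your treatment of the last claim, producing $V=[U/G]\times_{\Bi\GL_{n}}\Spec\Z$, agrees with the paper's one-line argument.

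The only incomplete step is the one you flag as hard: that $\GL_{n}\to O$ is faithfully flat, i.e.\ that $O$ really is $\GL_{n}/G$. You do not need smoothness of $O$, miracle flatness, or SGA~3 for this; it follows from a direct construction of local sections. Given $(q_{1},\dots,q_{h},v_{1},\dots,v_{r})\in O(R)$, each $\Imm(q_{i})$ is locally free of rank $m_{i}$, one has $R^{n}=\bigoplus_{i}\Imm(q_{i})\oplus\Imm(1-\sum_{i}q_{i})$, and $v_{1},\dots,v_{r}$ is a basis of the last summand. Zariski-locally on $\Spec R$ choose isomorphisms $W_{i}\otimes R\to\Imm(q_{i})$; together with $z_{j}\mapsto v_{j}$ these assemble into an element $g\in\GL_{n}(R)$, and $gp_{i}g^{-1}=q_{i}$ because both sides are idempotents with image $\Imm(q_{i})$ and with kernel the complement $\bigoplus_{j\neq i}\Imm(q_{j})\oplus\Imm(1-\sum_{k}q_{k})$ (a complement of $\Imm(q_{i})$ contained in the kernel must equal the kernel). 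Hence $g\cdot w_{0}=(q_{\bullet},v_{\bullet})$, so $\GL_{n}\to O$ admits sections Zariski-locally; combined with your computation that the scheme-theoretic stabilizer of $w_{0}$ is exactly $G$, this exhibits $\GL_{n}\to O$ as a Zariski-locally trivial $G$-torsor, whence $O\simeq\GL_{n}/G$ as fppf sheaves. With this observation your argument is complete, and in fact more elementary than the route you sketch.
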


\begin{proof}
The last claim follows from the first because $[U/G]\to\Bi G\to\Bi\GL_{n}$
would be quasi-affine because composition of quasi-affine maps.

Consider as $G\to\GL_{n}$ the map sending matrices $M_{1},\dots,M_{h}$
to the $n\times n$ matrix having the $M_{j}$ on the diagonal and
$1$ in the remaining spots of the diagonal. The corresponding map
$\Bi G\to\Bi\GL_{n}$, in terms of vector bundles, is induced by $\shH=(\bigoplus_{i}\shF_{i})\oplus\odi{}^{n-m}$,
where $m=\sum_{i}m_{i}$ and $\shF_{i}$ is the rank $m_{i}$ tautological
vector bundle given by $\Bi G\to\Bi G_{i}$. Denote by $h\colon W\to\Bi G$
the corresponding $\GL_{n}$-torsor. We have to show that $W$ is
quasi-affine and, since $W$ is an algebraic space, it is enough to
show that it is pseudo-affine thanks to \ref{cor:pseudo-affine algebraic is quasi-affine}.
It is enough to show that the sheaves over $\Bi G$ coming from $\Bi\GL_{n}$,
whose category we denote by $\shD_{n}$, generates $\QCoh(\Bi G)$:
$h^{*}\shD_{n}$ are made of free sheaves and, since $h$ is affine,
it generates $\QCoh(W)$ thanks to \ref{rem: generating subcategory under pseudo-affine maps},.

We have $\shH=\shF_{i}\oplus Q_{i}$. In particular there is a surjective
map $\shH\to\shF_{i}$. Moreover, since $\Sym^{q}(\shH)=(\Sym^{q}\shF_{\E})\oplus\shK_{i}$,
there is also a surjective map $\shD_{n}\ni(\Sym^{m}\shH)^{\vee}\to(\Sym^{m}\shF_{i})^{\vee}$.
It follows that the sheaves $\shF_{i}$, $(\Sym^{q}\shF_{i})^{\vee}$
for $q\in\N$ are generated by $\shD_{n}$, which therefore generates
$\QCoh(\Bi G)$ thanks to \ref{lem:generators for GLn}.
\end{proof}

\subsection{Comparing vector bundles}

In this section we want to discuss some relations between the vector
bundles on $\stX$ and on $\Fib_{\stX,\shC}$.
\begin{prop}
\label{prop:universality equivalent conditions} Let $\stX$ and $\shC$
be as in \ref{thm:when FibX,C has the resolution property}. The following
conditions are equivalent:
\begin{enumerate}
\item (if $\stX$ is quasi-separated (see \cite[Def 1.4]{Tonini2014}))
the morphism $\odi{\Fib_{\stX,\shC}}\to\shP_{\shC*}\odi{\stX}$ is
an isomorphism;
\item the functor $\Vect(\Fib_{\stX,\shC})\to\Vect(\stX)$ is fully faithful;
\item the functor $\shG_{*}\colon\shC\to\Vect(\Fib_{\stX,\shC})$ is fully
faithful.
\item the functor $\shG_{*}(\shC)\to\Vect(\stX)$ is faithful
\end{enumerate}
\end{prop}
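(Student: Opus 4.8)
The plan is to prove the three conditions (2), (3), (4) equivalent with no hypothesis on $\stX$, and to treat (1) separately under quasi-separatedness. Write $\shP=\shP_{\shC}$ and recall from the introduction that $\shP^{*}\circ\shG_{*}\colon\shC\to\Vect(\stX)$ is the inclusion of the full subcategory $\shC$. Hence for $\E,\E'\in\shC$ the maps
\[
a\colon\Hom_{\shC}(\E,\E')\to\Hom(\shG_{\E},\shG_{\E'}),\qquad b\colon\Hom(\shG_{\E},\shG_{\E'})\to\Hom_{\stX}(\E,\E')
\]
(the second being $\shP^{*}$, i.e.\ the functor in (4)) satisfy $b\circ a=\id$. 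From this identity one reads off formally that $a$ is always injective, $b$ always surjective, and that $a$ is bijective iff $b$ is injective iff $b$ is bijective. This gives at once the equivalence $(3)\Leftrightarrow(4)$ (namely ``$a$ bijective'' versus ``$b$ injective'') and the implication $(2)\Rightarrow(3)$, since (2) forces each $b$ to be bijective and hence each $a$ bijective.

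The substantial implication is $(3)\Rightarrow(2)$, which I would prove by resolving an arbitrary bundle by the generating subcategory $\shG_{*}(\shC)$. By \ref{thm:when FibX,C has the resolution property} the family $\shD=\{\shG_{\E}\}_{\E\in\shC}$ generates $\QCoh(\Fib_{\stX,\shC})$ and $\Fib_{\stX,\shC}$ is quasi-compact, so any $\shV\in\Vect(\Fib_{\stX,\shC})$ receives a surjection onto $\shV^{\vee}$ from a finite direct sum of objects of $\shD$; dualizing and using $\shG_{\duale{\E}}\simeq\duale{(\shG_{\E})}$ from \ref{lem:shG and operations} yields an exact sequence $0\to\shV\to\shG_{\F_{0}}\to\shG_{\F_{1}}$ in which $\shG_{\F_{0}},\shG_{\F_{1}}$ are finite direct sums of sheaves $\shG_{\E}$ with $\E\in\shC$ and all terms are bundles. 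Being locally split, it remains exact after $\shP^{*}$. Applying the left exact functors $\Hl^{0}(\Fib_{\stX,\shC},-)$ and $\Hl^{0}(\stX,-)$ and comparing via $\shP^{*}$ gives a commutative ladder exhibiting $\Hl^{0}(\Fib_{\stX,\shC},\shV)$ and $\Hl^{0}(\stX,\shP^{*}\shV)$ as the kernels of the two rows. Since $\Hl^{0}(\Fib_{\stX,\shC},\shG_{\F})=\Hom(\shG_{\odi{\stX}},\shG_{\F})$ and $\Hl^{0}(\stX,\shP^{*}\shG_{\F})=\Hom_{\stX}(\odi{\stX},\F)$, the vertical comparison on the terms $\shG_{\F_{0}},\shG_{\F_{1}}$ is exactly the map $b$ for the pairs $(\odi{\stX},\F)$, which is bijective by (3). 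The five lemma then yields that $\Hl^{0}(\Fib_{\stX,\shC},\shV)\to\Hl^{0}(\stX,\shP^{*}\shV)$ is an isomorphism; letting $\shV$ range over all bundles (in particular over $\duale{\shW}\otimes\shW'$) this is precisely the full faithfulness of $\Vect(\Fib_{\stX,\shC})\to\Vect(\stX)$, i.e.\ (2).

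For the quasi-separated case I would prove $(1)\Leftrightarrow(2)$. For $(1)\Rightarrow(2)$, each $\shV$ being locally free, the projection formula gives $\shP_{*}\shP^{*}\shV\simeq\shV\otimes\shP_{*}\odi{\stX}$; under (1) this is $\shV$, so the unit $\shV\to\shP_{*}\shP^{*}\shV$ is an isomorphism for all $\shV$, which is equivalent to full faithfulness of $\shP^{*}$ on $\Vect$. For $(2)\Rightarrow(1)$ I would test the morphism $\odi{\Fib_{\stX,\shC}}\to\shP_{\shC*}\odi{\stX}$ (a map of quasi-coherent sheaves, using quasi-separatedness so that $\shP_{*}\odi{\stX}$ is quasi-coherent) against the generators $\shG_{\E}$: by adjunction the induced map $\Hom(\shG_{\E},\odi{\Fib_{\stX,\shC}})\to\Hom(\shG_{\E},\shP_{*}\odi{\stX})\simeq\Hom_{\stX}(\E,\odi{\stX})$ is $b$ for $(\E,\odi{\stX})$, bijective since (2) implies (3); as $\shD$ generates $\QCoh(\Fib_{\stX,\shC})$, a morphism inducing bijections on $\Hom(\shG_{\E},-)$ for all $\E$ is an isomorphism.

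The main obstacle is the step $(3)\Rightarrow(2)$, i.e.\ upgrading control over the small monoidal dual-closed subcategory $\shC$ to all of $\Vect(\Fib_{\stX,\shC})$. The two delicate points are that the coresolution of $\shV$ by objects of $\shG_{*}(\shC)^{\oplus}$ stays exact after $\shP^{*}$ (which holds because short exact sequences of bundles are locally split) and that the comparison maps on the terms $\shG_{\F_{i}}$ are literally the maps $b$ constrained by (3); both rest on the compatibility of $\shG_{*}$ with duals and finite sums established in \ref{lem:shG and operations}, together with the generation statement of \ref{thm:when FibX,C has the resolution property}.
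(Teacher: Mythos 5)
Your proposal is correct, and it rests on the same two pillars as the paper's proof: the formal argument from $b\circ a=\id$ (giving $(3)\Leftrightarrow(4)$ and $(2)\Rightarrow(3)$, exactly as in the paper's terse ``it is enough to recall that $\shC\xrightarrow{\shG_{*}}\Vect(\Fib_{\stX,\shC})\to\Vect(\stX)$ is the inclusion''), and a locally split coresolution by the generators $\shG_{*}(\shC^{\oplus})$ using \ref{thm:when FibX,C has the resolution property} and $\shG_{\duale{\E}}\simeq\duale{(\shG_{\E})}$ from \ref{lem:shG and operations}. The organization differs, though. For the substantial step the paper proves $(4)\Rightarrow(2)$ by a two-variable reduction: it replaces $\shH$ by some $\shG_{\overline{\E}}$ via a presentation $\shG_{\E'}\to\shG_{\E}\to\shH\to0$, then replaces $\shH'$ by some $\shG_{\E}$ via the dualized coresolution $0\to\shH'\to\shG_{\E}\to\shG_{\E'}$, and concludes with injectivity from (4) plus surjectivity from the section identity; you instead prove $(3)\Rightarrow(2)$, and the internal-Hom trick $\Hom(\shW,\shW')=\Hl^{0}(\duale{\shW}\otimes\shW')$ lets you get away with a single coresolution and a kernel comparison, at the cost of using the full bijectivity of $b$ (available from (3)) rather than only its injectivity. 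Both are valid; yours is slightly leaner, the paper's needs the weaker hypothesis (4) directly. For $(1)\Leftrightarrow(2)$ the paper argues uniformly with the maps $\delta_{\shH}\colon\Hom(\shH,\odi{\Fib_{\stX,\shC}})\to\Hom(\shH_{|\stX},\odi{\stX})$, getting $(1)\Rightarrow(2)$ from Yoneda and $(2)\Rightarrow(1)$ from full faithfulness of $\QCoh(\Fib_{\stX,\shC})\to\L_{R}(\Vect(\Fib_{\stX,\shC}),R)$ granted by \ref{thm:main thm main paper}; your projection-formula argument for $(1)\Rightarrow(2)$ is a fine (and perhaps more transparent) substitute. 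The one step you assert without justification is in $(2)\Rightarrow(1)$: that a morphism of quasi-coherent sheaves on $\Fib_{\stX,\shC}$ inducing bijections on $\Hom(\shG_{\E},-)$ for all $\E\in\shC$ is an isomorphism. This is true but not purely formal from ``generation'': either argue with kernels (which exist in $\QCoh(\Fib_{\stX,\shC})$ since it is an fpqc stack with affine diagonal and an affine atlas), or simply invoke \ref{thm:main thm main paper} as the paper does, since $\{\shG_{\E}\}_{\E\in\shC}$ generates $\QCoh(\Fib_{\stX,\shC})$ by \ref{thm:when FibX,C has the resolution property}. With that reference added your proof is complete.
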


\begin{proof}
$1.\iff2.$ We assume $\stX$ quasi-compact and quasi-separated. Since
$\Fib_{\stX,\shC}$ is quasi-compact and quasi-separated by \ref{thm:when FibX,C has the resolution property},
the functor $\shP_{\shC}\colon\stX\to\Fib_{\stX,\shC}$ is quasi-compact
and quasi-separated as well, so that pushing forward quasi-coherent
sheaves works as expected (see \cite[Prop 1.15]{Tonini2014}). In
particular $\shP_{\shC*}\odi{\stX}$ is a quasi-coherent sheaf.

For $\shH\in\QCoh(\Fib_{\stX,\shC})$ consider the morphism
\[
\delta_{\shH}\colon\Hom(\shH,\odi{\Fib_{\stX,\shC}})\to\Hom(\shH_{|\stX},\odi{\stX})
\]
This map can also be obtained applying $\Hom(\shH,-)$ on the map
$\odi{\Fib_{\stX,\shC}}\to\shP_{\shC*}\odi{\stX}$:
\[
\Hom(\shH,\odi{\Fib_{\stX,\shC}})\to\Hom(\shH,\shP_{\shC*}\odi{\stX})\simeq\Hom(\shH_{|\stX},\odi{\stX})
\]
In particular Yoneda's lemma tells us that $1.$ is equivalent to
the fact that $\delta_{\shH}$ is an isomorphism for all $\shH\in\QCoh(\Fib_{\stX,\shC})$.
On the other hand, condition $2.$ is equivalent to the fact that
all maps $\delta_{\shH}$ are isomorphisms for $\shH\in\Vect(\Fib_{\stX,\shC})$.
Thus clearly $1.\then2.$. The converse instead follows from \ref{thm:main thm main paper}:
$\QCoh(\Fib_{\stX,\shC})\to\L_{R}(\Vect(\Fib_{\stX,\shC}),R)$ is
fully faithful because $\Fib_{\stX,\shC}$ has the resolution property
by \ref{thm:when FibX,C has the resolution property}.

$2.\then3.\iff4$. It is enough to recall that $\shC\xrightarrow{\shG_{*}}\Vect(\Fib_{\stX,\shC})\to\Vect(\stX)$
is the inclusion.

$4.\then2.$ We have to show that the map
\[
\Hom(\shH,\shH')\to\Hom(\shH_{|\stX},\shH'_{|\stX'})
\]
is an isomorphism for all $\shH,\shH'\in\Vect(\Fib_{\stX,\shC})$.
Since $\shG_{*}(\shC)$ generates $\QCoh(\stX)$ by \ref{thm:when FibX,C has the resolution property}
and $\shH$ is finitely presented, there exists an exact sequence
\[
\shG_{\E'}\to\shG_{\E}\to\shH\to0
\]
for some $\E,\E'\in\shC^{\oplus}$. As $\Hom(-,\shH')$ and $\Hom(-_{|\stX},\shH'_{|\stX}$)
are both exact on the above sequence, we can assume $\shH=\shG_{\overline{\E}}$
for some $\overline{\E}\in\shC^{\oplus}$.

Considering a resolution for $\shH'^{\vee}$, dualizing and using
that $\shG_{\E}^{\vee}\simeq\shG_{\E^{\vee}}$ by \ref{lem:shG and operations},
we also obtain an exact sequence 
\[
0\to\shH\to\shG_{\E}\to\shG_{\E'}
\]
for some $\E,\E'\in\shC^{\oplus}$. Moreover also this sequence restricts
to an exact sequence on $\stX$ because $\shH'$ is locally free:
$\Coker(\shH\to\shG_{\E})$ is locally free. In particular $\Hom(\shG_{\overline{\E}},-)$
and $\Hom(\overline{\E},-_{|\stX})$ are exact on the above sequence
and therefore we can also assume $\shH'=\shG_{\E}$ for some $\E\in\shC^{\oplus}$.

Splitting $\overline{\E}$ and $\E$ into direct sums of sheaves in
$\shC$ we are left to prove that
\[
\Hom(\shG_{\overline{\E}},\shG_{\E})\to\Hom(\overline{\E},\E)
\]
is an isomorphism for $\overline{\E},\E\in\shC$. By hypothesis this
map in injective. On the other hand it is surjective because $\shC\xrightarrow{\shG_{*}}\Vect(\Fib_{\stX,\shC})\to\Vect(\stX)$
is the inclusion.
\end{proof}
\begin{conjecture}
\label{conj:universal general} Let $\stX$ and $\shC$ be as in \ref{thm:when FibX,C has the resolution property}.
Then the equivalent conditions of Proposition \ref{prop:universality equivalent conditions}
holds.
\end{conjecture}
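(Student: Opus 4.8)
The plan is to establish condition $(4)$ of Proposition \ref{prop:universality equivalent conditions}, which by that Proposition forces all the others and hence the conjecture. Using that $\shG_{*}$ is strong monoidal with $\shG_{\duale\E}\simeq\duale{(\shG_{\E})}$ and $\shG_{\E\otimes\E'}\simeq\shG_{\E}\otimes\shG_{\E'}$ (see \ref{lem:shG and operations}), for $\E,\E'\in\shC$ one has $\Homsh(\shG_{\E},\shG_{\E'})\simeq\shG_{\duale\E\otimes\E'}$, so
\[
\Hom(\shG_{\E},\shG_{\E'})\simeq\Hl^{0}(\Fib_{\stX,\shC},\shG_{\duale\E\otimes\E'}),\qquad\Hom(\E,\E')\simeq\Hl^{0}(\stX,\duale\E\otimes\E').
\]
Since $\duale{\odi\stX}\otimes\shF\simeq\shF$, every object of $\shC$ occurs among the $\duale\E\otimes\E'$, so condition $(4)$ is equivalent to the injectivity of $\shP_{\shC}^{*}\colon\Hl^{0}(\Fib_{\stX,\shC},\shG_{\shF})\arr\Hl^{0}(\stX,\shF)$ for all $\shF\in\shC$. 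First I would observe that this is implied by the single assertion that the unit $\odi{\Fib_{\stX,\shC}}\arr\shP_{\shC*}\odi\stX$ is a \emph{monomorphism} (i.e. that $\shP_{\shC}$ is schematically dominant): by the projection formula $\shP_{\shC*}\shP_{\shC}^{*}\shG_{\shF}\simeq\shG_{\shF}\otimes\shP_{\shC*}\odi\stX$, and tensoring the monomorphism by the locally free $\shG_{\shF}$ and applying the left-exact functor $\Hom(\odi{\Fib_{\stX,\shC}},-)$ yields exactly the desired injectivity. Note that we need much less than the isomorphism demanded in condition $(1)$.

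Next I would test this monomorphism on the frame bundle model $\Fib_{\stX,\shC}\simeq[U/G]$ of \ref{thm:when FibX,C has the resolution property}, $2)$ and $3)$, where $U=\Fr(\shG_{*}\colon\shC\to\Vect(\Fib_{\stX,\shC}))$ is \emph{pseudo-affine} and $q\colon U\to\Fib_{\stX,\shC}$ is the structural $G$-torsor. Because $\shP_{\shC}^{*}\shG_{\E}\simeq\E$, pulling back $U$ along $\shP_{\shC}$ produces the frame bundle $V=\Fr(\shC\to\Vect(\stX))=\stX\times_{\Fib_{\stX,\shC}}U$, which is affine over $\stX$ by \ref{lem:frames}, together with the base change $\tilde{\shP}\colon V\to U$ of $\shP_{\shC}$ along the faithfully flat $q$. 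Flat base change (\cite[Prop 1.15]{Tonini2014}) identifies $q^{*}(\odi{\Fib_{\stX,\shC}}\to\shP_{\shC*}\odi\stX)$ with $\odi U\to\tilde{\shP}_{*}\odi V$, so the unit is a monomorphism if and only if $\tilde{\shP}$ is schematically dominant. As $U$ is pseudo-affine, $p_{*}$ is faithful and left exact (see \ref{lem:recognizing affine rings from global sections}), whence this reduces to the injectivity of the single ring map $\Hl^{0}(\odi U)\to\Hl^{0}(\odi V)$.

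The expected main obstacle, and the reason the statement remains conjectural, is precisely this last injectivity, equivalently the schematic density of the image of $V$ in the pseudo-affine space $U$. Geometrically $U$ parametrises all framed right-exact strong monoidal fiber functors on $\shC$, whereas $V$ parametrises only those of the form $s^{*}$ for genuine points $s\colon\Spec A\to\stX$; when $\shC$ does not generate $\QCoh(\stX)$ the space $U$ may well carry fiber functors that are not $\otimes$-isomorphic to any $s^{*}$ (Theorem \ref{thm: Tannaka reconstrion for Loc X} no longer applies), and there is no formal reason for the genuine ones to be schematically dense. A plausible but delicate line of attack is to write $U$ as the intersection $\bigcap_{\shT}\Omega_{\shT}$ of the quasi-compact exactness loci attached to the finite test sequences $\shT$ (Lemmas \ref{lem:the space of non exact functors} and \ref{lem:locus of exactness}, Theorem \ref{prop:arbitrary intersection of quasi-compact open is pseudo-affine}), and to show that a global function on $V$ vanishing on every genuine framed point already extends compatibly over each $\Omega_{\shT}$; controlling this uniformly over the possibly infinite family of test sequences is exactly where I expect the argument to resist completion.
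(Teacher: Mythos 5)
The statement you were given is not a theorem of the paper but an open conjecture: the author states explicitly (in the introduction and via \ref{prop:Conjecture implications}) that he is unable to prove it, and the paper contains no proof to compare yours against --- only verifications in two special cases, namely when $\shC$ generates $\QCoh(\stX)$ and $\stX$ is a quasi-compact fpqc stack with quasi-affine diagonal (where \ref{thm: Tannaka reconstrion for Loc X} makes $\shP_{\shC}$ an equivalence), and when $\shC=\{\odi{\stX}\}$ (see \ref{prop:the simplest category}). Your proposal, to its credit, does not claim otherwise: it ends by conceding that the final step resists completion. Judged as a proof it therefore has a genuine gap, and the gap is the entire content of the conjecture.

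Concretely, your chain --- condition (4) of \ref{prop:universality equivalent conditions} suffices; condition (4) follows if the unit $\odi{\Fib_{\stX,\shC}}\to\shP_{\shC*}\odi{\stX}$ is a monomorphism; by descent along the frame torsor $U\to\Fib_{\stX,\shC}$ this is equivalent to injectivity of $\Hl^{0}(\odi{U})\to\Hl^{0}(\odi{V})$ with $V=\Fr(\shC\to\Vect(\stX))$ --- is a chain of \emph{equivalences}, not reductions to anything simpler. Indeed, condition (4) conversely implies (under quasi-separatedness, via the equivalence with condition (1)) that the unit is an isomorphism, hence a monomorphism; so ``the unit is mono'' is not a weaker intermediate statement but the conjecture restated, and likewise for the schematic dominance of $V$ in the pseudo-affine scheme $U$. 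The difficulty you correctly identify --- that $U$ is an intersection of infinitely many exactness loci $\Omega_{\shT}$ and nothing forces functions vanishing on the genuine framed points to vanish on all of $U$ --- is exactly the unresolved problem, and your proposal supplies no mechanism for it. Two smaller caveats: your uses of $\shP_{\shC*}$, the projection formula and flat base change require $\shP_{\shC}$ to be quasi-compact and quasi-separated, a hypothesis the paper itself attaches to condition (1) and which holds only when $\stX$ is quasi-separated; and the passage from injectivity on $\Hl^{0}$ back to a monomorphism of sheaves needs the faithfulness of $p_{*}$ from \ref{lem:recognizing affine rings from global sections}, which you invoke but should make explicit. In sum, the proposal is a reasonable and essentially correct reformulation of the open problem, not a solution to it.
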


\begin{rem}
Conjecture \ref{conj:universal general} holds in two special cases.
The first one is when $\stX$ is quasi-compact with quasi-affine diagonal
and $\shC$ generates $\QCoh(\stX)$: by \ref{thm: Tannaka reconstrion for Loc X}
the map $\stX\to\Fib_{\stX,\shC}$ is actually an equivalence. The
second is when $\stX$ is quasi-compact and quasi-separated and $\shC=\{\odi{\stX}\}$,
the simplest category: the Conjecture is verified thanks to \ref{prop:the simplest category},
that we are going to prove.
\end{rem}

\begin{prop}
Let $\stX$ be a quasi-compact fibered category over $R$ and $\stY$
a quasi-compact fpqc stack with quasi-affine diagonal and with the
resolution property. Then any map $f\colon\stX\to\stY$ has a canonical
factorization $\stX\to\Fib_{\stX,\Vect(\stX)}\to\stY$.
\end{prop}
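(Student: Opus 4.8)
The plan is to apply Theorem~\ref{thm: Tannaka reconstrion for Loc X} to the \emph{target} $\stY$ with the monoidal subcategory $\Vect(\stY)$. Since $\stY$ has the resolution property, $\Vect(\stY)\subseteq\Loc\stY$ is a monoidal subcategory with duals generating $\QCoh(\stY)$; as $\stY$ is moreover a quasi-compact fpqc stack with quasi-affine diagonal, Theorem~\ref{thm: Tannaka reconstrion for Loc X} gives that $\shP_{\Vect(\stY)}\colon\stY\arr\Fib_{\stY,\Vect(\stY)}$ is an equivalence. So it is enough to construct a morphism $\Fib_{\stX,\Vect(\stX)}\arr\Fib_{\stY,\Vect(\stY)}$ and then identify $\Fib_{\stY,\Vect(\stY)}$ with $\stY$; composing with $\shP_{\Vect(\stX)}$ will give the desired factorization. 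Note that $\stX$ itself is only assumed quasi-compact, which is all that is needed for $\Fib_{\stX,\Vect(\stX)}$ and $\shP_{\Vect(\stX)}$ to be defined.

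The first step is to observe that the pullback $f^{*}\colon\Vect(\stY)\arr\Vect(\stX)$ is a covariant, $R$-linear, strong monoidal functor that is right exact in the sense of Definition~\ref{def:exactness}. Indeed $f^{*}\colon\QCoh\stY\arr\QCoh\stX$ preserves cokernels and finite direct sums and sends locally free sheaves to locally free sheaves, so $f^{*}$ carries a finite test sequence for $\Vect(\stY)$ to a finite test sequence for $\Vect(\stX)$.

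Next I would define the morphism on $A$-points by
\[
\Fib_{\stX,\Vect(\stX)}(A)\arr\Fib_{\stY,\Vect(\stY)}(A)\comma\Gamma\longmapsto\Gamma\circ f^{*}
\]
For $\Gamma\in\Fib_{\stX,\Vect(\stX)}(A)$ the composite $\Gamma\circ f^{*}\colon\Vect(\stY)\arr\Vect(A)$ is again covariant, $R$-linear and strong monoidal, and it is right exact: given a finite test sequence $\shT$ for $\Vect(\stY)$, the sequence $f^{*}\shT$ is a finite test sequence for $\Vect(\stX)$ by the previous step, and $\Gamma$ is exact on it. This assignment is compatible with base change in $A$, hence defines a morphism of fibered categories; transporting its target along a quasi-inverse of $\shP_{\Vect(\stY)}$ produces $g\colon\Fib_{\stX,\Vect(\stX)}\arr\stY$.

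Finally I would verify $g\circ\shP_{\Vect(\stX)}\simeq f$. On an $A$-point $s\colon\Spec A\arr\stX$ the functor $\shP_{\Vect(\stX)}$ returns $s^{*}\colon\Vect(\stX)\arr\Vect(A)$, and the morphism above sends it to $s^{*}\circ f^{*}\simeq(f\circ s)^{*}=\shP_{\Vect(\stY)}(f\circ s)$; applying the quasi-inverse of $\shP_{\Vect(\stY)}$ recovers $f\circ s\in\stY(A)$, naturally in $s$. Thus $g\circ\shP_{\Vect(\stX)}\simeq f$, and the factorization is canonical because $g$ is built functorially out of $f$ (I do not claim uniqueness of the factorization, which is the content of Conjecture~\ref{conj:universal general}). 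I expect the only non-formal point to be the right exactness of $\Gamma\circ f^{*}$, i.e.\ the compatibility of $f^{*}$ with the notion of test sequence isolated in the first step; everything else is a formal manipulation of the universal equivalence of Theorem~\ref{thm: Tannaka reconstrion for Loc X}.
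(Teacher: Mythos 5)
Your proposal is correct and follows essentially the same route as the paper: the paper's proof likewise defines $\Fib_{\stX,\Vect(\stX)}\to\Fib_{\stY,\Vect(\stY)}$ by $\Gamma\mapsto\Gamma\circ f^{*}$ and composes it with the inverse of the equivalence $\stY\to\Fib_{\stY,\Vect(\stY)}$ supplied by Theorem~\ref{thm: Tannaka reconstrion for Loc X}. You merely make explicit two points the paper leaves implicit, namely that $f^{*}$ carries finite test sequences for $\Vect(\stY)$ to finite test sequences for $\Vect(\stX)$ (so that $\Gamma\circ f^{*}$ is indeed a fiber functor) and that the resulting map composed with $\shP_{\Vect(\stX)}$ recovers $f$.
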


\begin{proof}
The canonical extension $\Fib_{\stX,\Vect(\stX)}\to\stY$ is the composition
of $\Fib_{\stX,\Vect(\stX)}\to\Fib_{\stY,\Vect(\stY)}$, $\Gamma\mapsto\Gamma\circ f^{*}$
and the equivalence $\stY\to\Fib_{\stY,\Vect(\stY)}$ (see \ref{thm: Tannaka reconstrion for Loc X}).
This can also be described, again using \ref{thm: Tannaka reconstrion for Loc X},
as the map corresponding to the composition
\[
\Vect(\stY)\to\Vect(\stX)\xrightarrow{\shG_{*}}\Vect(\Fib_{\stX,\Vect(\stX)})
\]
\end{proof}
\begin{conjecture}
\label{conj:universal resolution property} Let $\stX$ be a quasi-compact
fibered category over $R$. The morphism $\stX\to\Fib_{\stX,\shC}$
is universal among maps from $\stX$ to quasi-compact fpqc stacks
with quasi-affine diagonal and with the resolution property, that
is 
\begin{equation}
\Hom(\Fib_{\stX,\Vect(\stX)},\stY)\to\Hom(\stX,\stY)\label{eq:universal}
\end{equation}
is an equivalence for all such stacks $\stY$.
\end{conjecture}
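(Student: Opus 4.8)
The plan is to reduce the claimed universal property to Conjecture \ref{conj:universal general}, the full faithfulness of $\shG_{*}$, and to exploit that the target $\stY$ is reconstructed from its own fiber functors. Write $\stF=\Fib_{\stX,\Vect(\stX)}$ and $\shP=\shP_{\Vect(\stX)}\colon\stX\to\stF$. First I would apply Corollary \ref{cor:Lurie s point of view} to $\stY$: since $\stY$ is a quasi-compact fpqc stack with quasi-affine diagonal whose resolution property says exactly that $\Vect(\stY)$ generates $\QCoh(\stY)$, for every fibered category $\stZ$ over $R$ there is a natural equivalence
\[
\Hom(\stZ,\stY)\simeq\Fib_{\stY,\Vect(\stY)}(\stZ)\comma (\stZ\arrdi g\stY)\longmapsto g^{*}_{|\Vect(\stY)}.
\]
Taking $\stZ=\stF$ and $\stZ=\stX$ and using $(F\circ\shP)^{*}=\shP^{*}\circ F^{*}$, the functor \eqref{eq:universal} is identified with post-composition by $\shP^{*}\colon\Vect(\stF)\to\Vect(\stX)$,
\[
\Fib_{\stY,\Vect(\stY)}(\stF)\arr\Fib_{\stY,\Vect(\stY)}(\stX)\comma \Gamma\longmapsto\shP^{*}\circ\Gamma.
\]
Thus the whole statement reduces to showing that this post-composition is an equivalence of groupoids.

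Essential surjectivity is already available: by the preceding proposition every $f\colon\stX\to\stY$ admits a canonical factorization $\stX\arrdi{\shP}\stF\to\stY$, which under the identification above exhibits a preimage of $f^{*}_{|\Vect(\stY)}$. For full faithfulness I would argue as follows. A morphism between two objects $\Gamma_{1},\Gamma_{2}\in\Fib_{\stY,\Vect(\stY)}(\stF)$ is a monoidal natural transformation $\theta$, i.e.\ a compatible family of maps $\theta_{V}\colon\Gamma_{1}(V)\to\Gamma_{2}(V)$ in $\Vect(\stF)$ for $V\in\Vect(\stY)$, and post-composition sends $\theta$ to the family $\shP^{*}\theta_{V}$ in $\Vect(\stX)$. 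This is exactly where Conjecture \ref{conj:universal general} enters: by Proposition \ref{prop:universality equivalent conditions} its content (for $\shC=\Vect(\stX)$) is equivalent to $\shP^{*}\colon\Vect(\stF)\to\Vect(\stX)$ being fully faithful. Granting this, faithfulness of $\shP^{*}$ forces $\theta$ to be determined by $\shP^{*}\theta$, while fullness lets me lift any natural family $\shP^{*}\Gamma_{1}(V)\to\shP^{*}\Gamma_{2}(V)$ to a unique family $\Gamma_{1}(V)\to\Gamma_{2}(V)$; naturality and the monoidal compatibilities transport back because $\shP^{*}$ is faithful and the relevant diagrams become equalities after applying it. Hence post-composition by $\shP^{*}$ is fully faithful, and combined with essential surjectivity it is an equivalence.

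The hard part is the one step that cannot be bypassed, namely Conjecture \ref{conj:universal general} itself: controlling the maps $\Hom(\shG_{\E},\shG_{\E'})\to\Hom(\E,\E')$, equivalently the full faithfulness of $\shP^{*}$ on vector bundles, for an arbitrary quasi-compact $\stX$ with no resolution property. Everything else in the argument is formal bookkeeping with the reconstruction equivalence for $\stY$ and the factorization constructed above. As recorded in the remark after Conjecture \ref{conj:universal general}, this input is presently known only when $\shC=\Vect(\stX)$ already generates $\QCoh(\stX)$ --- in which case $\shP$ is an equivalence by Theorem \ref{thm: Tannaka reconstrion for Loc X} and the statement is immediate --- and in the baby case $\shC=\{\odi{\stX}\}$, so an unconditional proof would require a genuinely new idea precisely at this point.
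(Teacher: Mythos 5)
Your argument is correct as far as it can go, and it coincides with the paper's own treatment: the paper likewise does not prove Conjecture \ref{conj:universal resolution property} unconditionally, but establishes in Proposition \ref{prop:Conjecture implications} exactly your reduction, namely that the universal property is equivalent to the faithfulness (hence, by Proposition \ref{prop:universality equivalent conditions}, full faithfulness) of $\Vect(\Fib_{\stX,\Vect(\stX)})\to\Vect(\stX)$, using the same ingredients you invoke (Theorem \ref{thm: Tannaka reconstrion for Loc X} applied to $\stY$, the canonical factorization $\stX\to\Fib_{\stX,\Vect(\stX)}\to\stY$, and the fact that restriction composed with $\shG_{*}$ is the identity on $\Vect(\stX)$). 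You are also right that the remaining input, Conjecture \ref{conj:universal general}, is precisely what the paper leaves open, so flagging it as the irreducible gap is the accurate conclusion.
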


Proposition below shows that Conjecture \ref{conj:universal general}
for $\shC=\Vect(\stX)$ is equivalent to Conjecture \ref{conj:universal resolution property}.
\begin{prop}
\label{prop:Conjecture implications} Conjecture \ref{conj:universal resolution property}
holds if and only if
\[
\Vect(\Fib_{\stX,\Vect(\stX)})\to\Vect(\stX)
\]
 is faithful. In this case the above map is an equivalence.
\end{prop}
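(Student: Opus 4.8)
Throughout set $\shC=\Vect(\stX)$, $\stZ=\Fib_{\stX,\shC}$ and $\shP=\shP_{\shC}\colon\stX\to\stZ$, and recall from the introduction that $\shP_{\shC}^{*}\circ\shG_{*}\simeq\id$ on $\shC$. By \ref{thm:when FibX,C has the resolution property} the stack $\stZ$ is a quasi-compact fpqc stack with affine diagonal and the resolution property, so it is one of the stacks $\stY$ allowed in Conjecture \ref{conj:universal resolution property}. The plan is to reduce \eqref{eq:universal} to a statement about $\shP_{\shC}^{*}\colon\Vect(\stZ)\to\Vect(\stX)$ and then to feed it into Proposition \ref{prop:universality equivalent conditions}. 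First I would observe that for every quasi-compact fpqc stack $\stY$ with quasi-affine diagonal and the resolution property, Corollary \ref{cor:Lurie s point of view} applied to $\stY$ and $\Vect(\stY)$ (which generates $\QCoh\stY$) gives a natural equivalence $\Hom(\stW,\stY)\simeq\Fib_{\stY,\Vect(\stY)}(\stW)$, $g\mapsto g^{*}_{|\Vect(\stY)}$, for every fibered category $\stW$. Taking $\stW=\stZ$ and $\stW=\stX$ and using $(g\circ\shP)^{*}=\shP_{\shC}^{*}\circ g^{*}$, the map \eqref{eq:universal} is identified with post-composition by $\shP_{\shC}^{*}$,
\[
\Fib_{\stY,\Vect(\stY)}(\stZ)\to\Fib_{\stY,\Vect(\stY)}(\stX)\comma\Gamma\longmapsto\shP_{\shC}^{*}\circ\Gamma .
\]

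Next I would settle the equivalence claim. Since $\shP_{\shC}^{*}\shG_{\E}\simeq\E$ for $\E\in\shC$, the functor $\shP_{\shC}^{*}$ is essentially surjective. If it is faithful, then its restriction to $\shG_{*}(\shC)$ is faithful, i.e. condition $4$ of Proposition \ref{prop:universality equivalent conditions} holds; hence all the conditions there hold, so by condition $2$ the functor $\shP_{\shC}^{*}$ is fully faithful, and therefore an equivalence. This proves the last sentence of the statement. Being a pullback, $\shP_{\shC}^{*}$ is $R$-linear, strong monoidal and exact, so any quasi-inverse $\shQ$ is again $R$-linear, strong monoidal and exact; post-composition by $\shQ$ then inverts post-composition by $\shP_{\shC}^{*}$ on the groupoids of fiber functors, so the displayed map is an equivalence for every allowed $\stY$. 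By the reduction this is exactly Conjecture \ref{conj:universal resolution property}.

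For the converse I would specialise the assumed Conjecture to $\stY=\stZ$, so that $\Gamma\mapsto\shP_{\shC}^{*}\circ\Gamma$ is an equivalence $\Fib_{\stZ,\Vect(\stZ)}(\stZ)\to\Fib_{\stZ,\Vect(\stZ)}(\stX)$. The key point is that $\shG_{*}\circ\shP_{\shC}^{*}\colon\Vect(\stZ)\to\Vect(\stZ)$ is again a fiber functor: it is $R$-linear and strong monoidal, and it is right exact because $\shP_{\shC}^{*}$ is exact and carries test sequences in $\Vect(\stZ)$ to test sequences in $\Vect(\stX)$, on which the fiber functor $\shG_{*}$ is right exact. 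Both $\id_{\Vect(\stZ)}$ and $\shG_{*}\circ\shP_{\shC}^{*}$ are carried to $\shP_{\shC}^{*}$ by the equivalence, using $\shP_{\shC}^{*}\circ\shG_{*}\simeq\id$; since a fully faithful functor reflects isomorphisms, I obtain a natural isomorphism $\eta\colon\id_{\Vect(\stZ)}\arrdi{\sim}\shG_{*}\circ\shP_{\shC}^{*}$. Finally, for any $\phi\colon\shH\to\shH'$ in $\Vect(\stZ)$ with $\shP_{\shC}^{*}\phi=0$, naturality gives $\eta_{\shH'}\circ\phi=\shG_{*}(\shP_{\shC}^{*}\phi)\circ\eta_{\shH}=0$, whence $\phi=0$ as $\eta_{\shH'}$ is invertible; thus $\shP_{\shC}^{*}$ is faithful, as required.

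The main obstacle is this converse direction, and precisely the production of the isomorphism $\id\simeq\shG_{*}\circ\shP_{\shC}^{*}$. One must verify that $\shG_{*}\circ\shP_{\shC}^{*}$ really lands in the groupoid $\Fib_{\stZ,\Vect(\stZ)}(\stZ)$, so that the equivalence coming from the Conjecture can be applied to it, and that both it and $\id_{\Vect(\stZ)}$ have the same image $\shP_{\shC}^{*}$. Once $\eta$ is available, faithfulness falls out of naturality together with additivity $\shG_{*}(0)=0$, and the remaining implications are purely formal applications of Proposition \ref{prop:universality equivalent conditions} and of the fact that post-composition with an exact strong monoidal equivalence preserves fiber functors.
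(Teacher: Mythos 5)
Your proposal is correct, and while your forward direction matches the paper's, your converse is a genuinely different argument. In the forward direction both you and the paper upgrade faithfulness to full faithfulness via Proposition \ref{prop:universality equivalent conditions}, get essential surjectivity from $\shP_{\shC}^{*}\circ\shG_{*}\simeq\id$, and translate (\ref{eq:universal}) through Theorem \ref{thm: Tannaka reconstrion for Loc X} and Corollary \ref{cor:Lurie s point of view}; your only loose claim there is that a quasi-inverse of an exact functor is again exact, which is not a formal fact, but it is harmless here because any quasi-inverse of $\shP_{\shC}^{*}$ is naturally isomorphic to $\shG_{*}$, and $\shG_{*}$ is right exact by construction (exactness of $\shG_{\E''}\to\shG_{\E'}\to\shG_{\E}\to0$ in $\QCoh(\Fib_{\stX,\Vect(\stX)})$ is tested on points $\Gamma$ of $\Fib_{\stX,\Vect(\stX)}$, where it is precisely the right exactness of the fiber functor $\Gamma$), so you should invoke that rather than assert it abstractly. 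For the converse, the paper tests the universal property only against the concrete stacks $\stY_{h}=\bigsqcup_{l=0}^{h}\Bi\Gl_{l}$, so that (\ref{eq:universal}) becomes an equivalence between groupoids of vector bundles of rank at most $h$; since these groupoids see only isomorphisms, the paper must then convert faithfulness on isomorphisms into faithfulness on all morphisms by the shear trick: for $\phi\colon\E\to\shH$ with $\phi_{|\stX}=0$, the automorphism $\psi(e\oplus h)=e\oplus(h+\phi(e))$ of $\E\oplus\shH$ restricts to the identity, hence equals the identity, forcing $\phi=0$. You instead apply the Conjecture to $\stY=\Fib_{\stX,\Vect(\stX)}$ itself (legitimate, since Theorem \ref{thm:when FibX,C has the resolution property} makes it an allowed target), check that $\shG_{*}\circ\shP_{\shC}^{*}$ is a fiber functor, and use full faithfulness of restriction to produce a monoidal natural isomorphism $\eta\colon\id\simeq\shG_{*}\circ\shP_{\shC}^{*}$; faithfulness then falls out of naturality of $\eta$ against arbitrary (not just invertible) morphisms, so no shear trick is needed --- this is the standard unit-of-an-equivalence argument for universal properties. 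The trade-off: the paper's route proves the formally stronger statement that the Conjecture restricted to the small, explicit stacks $\stY_{h}$ already implies faithfulness, while your route is more conceptual and uniform, at the price of invoking the Conjecture for the large stack $\Fib_{\stX,\Vect(\stX)}$ itself.
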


\begin{proof}
As the composition $\Vect(\stX)\xrightarrow{\shG_{*}}\Vect(\Fib_{\stX,\Vect(\stX)})\to\Vect(\stX)$
is the identity, we see that if $\Vect(\Fib_{\stX,\Vect(\stX)})\to\Vect(\stX)$
is fully faithful then it is an equivalence. Moreover by \ref{prop:universality equivalent conditions}
faithfulness also implies the fullness. Assume this is true. If we
start by a functor $\Fib_{\stX,\Vect(\stX)}\to\stY$, which is completely
determined by the pullback
\[
\Vect(\stY)\to\Vect(\Fib_{\stX,\Vect(\stX)})
\]
by \ref{thm: Tannaka reconstrion for Loc X}, the new map $\Fib_{\stX,\Vect(\stX)}\to\stY$
obtained corresponds to
\[
\Vect(\stY)\to\Vect(\Fib_{\stX,\Vect(\stX)})\to\Vect(\stX)\xrightarrow{\shG_{*}}\Vect(\Fib_{\stX,\Vect(\stX)})
\]
and therefore is canonically isomorphic to the original one.

Conversely assume $\stX\to\Fib_{\stX,\Vect(\stX)}$ is universal.
Given $h\in\N$ we define $\stY_{h}$ as the stack of vector bundles
whose local rank is bounded above by $h$. In other words 
\[
\stY_{h}=\bigsqcup_{l=0}^{h}\Bi\Gl_{l}
\]
which is a quasi-compact stack with affine diagonal and the resolution
property. Assuming that (\ref{eq:universal}) is an equivalence for
all $\stY_{h}$ means that 
\[
\Vect(\Fib_{\stX,\Vect(\stX)})\to\Vect(\stX)
\]
induces an equivalence on the corresponding groupoids. We have to
show that, if $\E,\shH\in\Vect(\Fib_{\stX,\Vect(\stX)})$ and $\phi\colon\E\to\shH$
is a morphism and $\phi_{|\stX}=0$ then $\phi=0$. Consider the isomorphism
\[
\psi\colon(\E\oplus\shH)\to(\E\oplus\shH)\comma e\oplus h\longmapsto e\oplus(h+\phi(e))
\]
By construction $\psi_{|\stX}=\id$. Thus $\psi=\id$ and $\phi=0$
as required. 
\end{proof}

\subsection{The case $\protect\shC=\{\protect\odi{\protect\stX}\}$}

In this section we consider the simple case $\shC=\{\odi{\stX}\}$.
Notice that $\Sym^{n}\odi{\stX}\simeq\odi{\stX}$ so the hypothesis
of \ref{thm:when FibX,C has the resolution property} are satisfied.
\begin{prop}
\label{prop:the simplest category}Let $\stX$ be a quasi-compact
fibered category over $R$. Then $\Fib_{\stX,\{\odi{\stX}\}}$ is
the intersection of all quasi-compact open subsets of $\Spec\Hl^{0}(\odi{\stX})$
containing the image of $\stX\to\Spec\Hl^{0}(\odi{\stX})$.

If $\stX$ is quasi-compact and quasi-separated then $\Vect(\Fib_{\stX,\{\odi{\stX}\}})\to\Vect(\stX)$
is fully faithful, or, equivalently, $\odi{\Fib_{\stX,\{\odi{\stX}\}}}\to\shP_{\{\odi{\stX}\}*}\odi{\stX}$
is an isomorphism. Moreover $\stX\to\Fib_{\stX,\{\odi{\stX}\}}$ is
universal among maps from $\stX$ to pseudo-affine sheaves.
\end{prop}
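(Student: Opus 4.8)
My plan is to prove the three assertions in the order stated, reusing the machinery already set up. Write $B=\Hl^{0}(\odi{\stX})$ and let $\pi\colon\stX\to\Spec B$ be the canonical map, so that $\pi^{\#}\colon B\to\Hl^{0}(\odi{\stX})=B$ is the identity. First I would unwind what a fiber functor on $\shC=\{\odi{\stX}\}$ is: a strong monoidal, $R$-linear functor $\{\odi{\stX}\}\to\Vect(A)$ is forced to send $\odi{\stX}$ to $A$ and so amounts to an $R$-algebra map $B=\End(\odi{\stX})\to A$, i.e.\ a morphism $\alpha\colon\Spec A\to\Spec B$. The remaining right exactness condition is exactly the one analysed in the second half of the proof of \ref{prop:arbitrary intersection of quasi-compact open is pseudo-affine}: the test sequences of $\{\odi{\stX}\}$ are the complexes $\pi^{*}\shW$ for $\shW$ a complex of free $B$-modules which becomes exact on $\stX$, their exactness locus in $\Spec B$ is a quasi-compact open by \ref{lem:locus of exactness} whose formation commutes with pullback, and $\alpha$ is right exact iff it factors through all of them. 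Matching these loci with the quasi-compact opens containing the image of $\pi$ (the open $\Spec B\setminus V(f_{1},\dots,f_{r})$ being the exactness locus of $\odi{\stX}^{r}\xrightarrow{(f_{i})}\odi{\stX}\to0$) identifies $\Fib_{\stX,\{\odi{\stX}\}}$ with the intersection $Z$ of all quasi-compact opens of $\Spec B$ containing the image of $\pi$, which is pseudo-affine by \ref{prop:arbitrary intersection of quasi-compact open is pseudo-affine}.

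For the second assertion I would invoke \ref{prop:universality equivalent conditions} (whose hypotheses hold for $\shC=\{\odi{\stX}\}$), reducing full faithfulness of $\Vect(Z)\to\Vect(\stX)$ to the assertion that the unit $u\colon\odi{Z}\to\shP_{\{\odi{\stX}\}*}\odi{\stX}$ is an isomorphism. Since $\Hl^{0}(\odi{\stX})=B$ and $\pi$ is quasi-compact and quasi-separated, $\pi_{*}\odi{\stX}=\odi{\Spec B}$ and the corresponding unit $u_{0}$ is an isomorphism. The plan is to test $u$ after pulling back along the affine faithfully flat covering $h\colon\hat V\to Z$ built in the proof of \ref{prop:arbitrary intersection of quasi-compact open is pseudo-affine}: by flat base change \cite[Prop 1.15]{Tonini2014}, $h^{*}u$ is the unit of $\shP'\colon\stX\times_{Z}\hat V\to\hat V$, and because $\hat V\to\Spec B$ is flat and factors through $Z$ (so that $\stX\times_{Z}\hat V=\stX\times_{\Spec B}\hat V$, using that $Z\to\Spec B$ is a monomorphism) this $\shP'$ is the flat base change of $\pi$, whose unit is the pullback of the isomorphism $u_{0}$. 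Thus $h^{*}u$ is an isomorphism and, $h$ being faithfully flat, so is $u$; taking $\Hl^{0}$ also yields $\Hl^{0}(\odi{Z})=B$. I expect this isomorphism $u$ to be the main obstacle: the purely set-theoretic fact that $Z$ contains the image of $\pi$ has to be upgraded to the scheme-theoretic statement $\Hl^{0}(\odi{Z})=B$, and the flat base change along $\hat V$ is the device that delivers it without developing a theory of scheme-theoretic image for stacks.

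For the universal property, let $\stW$ be a pseudo-affine sheaf; by \ref{prop:arbitrary intersection of quasi-compact open is pseudo-affine} it is the intersection $\bigcap_{k}W_{k}$ of quasi-compact opens of $\Spec C$, $C=\Hl^{0}(\odi{\stW})$, with $j'\colon\stW\to\Spec C$ a flat monomorphism. Given $f\colon\stX\to\stW$, composing with $j'$ gives $C\to\Hl^{0}(\odi{\stX})=B$, hence $g\colon\Spec B\to\Spec C$ with the composite $\stX\to\Spec C$ equal to $g\pi$. I would then set $\phi=g\circ j\colon Z\hookrightarrow\Spec B\to\Spec C$ and observe that each $g^{-1}(W_{k})$ is a quasi-compact open of $\Spec B$ containing the image of $\pi$, hence contains $Z$; therefore $\phi$ factors through every $W_{k}$, thus through $\stW$, producing $\tilde f\colon Z\to\stW$ with $\tilde f\circ\shP_{\{\odi{\stX}\}}=f$. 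Uniqueness follows because two such extensions agree after applying $\shP_{\{\odi{\stX}\}}^{\#}\colon\Hl^{0}(\odi{Z})\to B$, which is injective by the previous paragraph. This exhibits $\stX\to\Fib_{\stX,\{\odi{\stX}\}}$ as universal among maps to pseudo-affine sheaves.
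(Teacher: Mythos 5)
Your proposal is correct and follows essentially the same route as the paper's proof: the same identification of $\Fib_{\stX,\{\odi{\stX}\}}$ with the intersection of the exactness loci of test sequences via \ref{lem:locus of exactness}, the same unit isomorphism obtained from flat base change (\cite[Prop 1.15]{Tonini2014}) applied to the Cartesian square produced by the monomorphism $Z\to\Spec B$ together with $\pi_{*}\odi{\stX}=\odi{\Spec B}$, and the same universal-property argument by pulling back quasi-compact opens and using injectivity on global sections for uniqueness. The only cosmetic deviations are that the paper applies flat base change directly along the flat monomorphism $Z\to\Spec B$, which makes your detour through the faithfully flat covering $\hat{V}\to Z$ and the subsequent descent step unnecessary, and that it treats maps to quasi-affine schemes first, whereas you handle a general pseudo-affine target in one stroke.
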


\begin{proof}
We can assume $R=\Hl^{0}(\odi{\stX})$. Set $\shC=\{\odi{\stX}\}$.
If $A$ is an $R$-algebra, there is a unique $R$-linear and strong
monoidal functor $\Gamma^{A}\colon\shC\to\Vect(A)$, namely $\Gamma^{A}=\Hl^{0}(-)\otimes_{R}A$.
This means that $\Fib_{\stX,\shC}$ is a subfunctor of $\Spec R$.
We have that $\Gamma^{A}\in\Fib_{\stX,\shC}(A)$ if and only if, given
a test sequence $\shT_{*}$ for $\shC$, $\Gamma^{A}$ is exact on
it. As
\[
\Gamma^{A}(\shT_{*})=\Hl^{0}(\shT_{*})\otimes_{R}A
\]
we can conclude that $\Fib_{\stX,\shC}$ is the locus where all complex
of free $R$-modules $\Hl^{0}(\shT_{*})$ are exact. By \ref{lem:locus of exactness},
this is an intersection of quasi-compact open subsets of $\Spec R$
containing the image of $\stX\to\Spec R$. Conversely if $\stX\to U\subseteq\Spec R$
and $U$ is a quasi-compact open subset, say $U=\Spec R-V(a_{1},\dots,a_{n})$,
then $U$ is the locus where $(a_{1},\dots,a_{n})\colon R^{n}\to R$
is surjective and, in particular, 
\[
\odi{\stX}^{n}\xrightarrow{(a_{1},\dots,a_{n})}\odi{\stX}\to0
\]
is a test sequence. Therefore $\Fib_{\stX,\shC}\subseteq U$ as required.

Now assume that $\stX$ (and therefore $\shP_{\shC}\colon\stX\to\Fib_{\stX,\shC}$)
is quasi-compact and quasi-separated. Set $\pi\colon\stX\to\Spec R$.
By construction $\pi_{*}\odi{\stX}=\odi{\Spec R}$. From the Cartesian
diagram   \[   \begin{tikzpicture}[xscale=2.0,yscale=-1.2]     \node (A0_0) at (0, 0) {$\stX$};     \node (A0_1) at (1, 0) {$\stX$};     \node (A1_0) at (0, 1) {$\Fib_{\stX,\shC}$};     \node (A1_1) at (1, 1) {$\Spec R$};     \path (A0_0) edge [->]node [auto] {$\scriptstyle{\id}$} (A0_1);     \path (A0_0) edge [->]node [auto] {$\scriptstyle{\shP_\shC}$} (A1_0);     \path (A0_1) edge [->]node [auto] {$\scriptstyle{\pi}$} (A1_1);     \path (A1_0) edge [->]node [auto] {$\scriptstyle{u}$} (A1_1);   \end{tikzpicture}   \] the
flatness of $u$ and \cite[Prop 1.15]{Tonini2014} we can conclude
that 
\[
\odi{\Fib_{\stX,\shC}}\simeq u^{*}\odi{\Spec R}\simeq u^{*}(\pi_{*}\odi{\stX})\simeq\shP_{\shC*}\odi{\stX}
\]
By \ref{prop:universality equivalent conditions} this is equivalent
to the fully faithfulness of $\Vect(\Fib_{\stX,\shC})\to\Vect(\stX)$.

For the last claim, if $\stX\to U$ is a map to a quasi-affine scheme
consider the diagram    \[   \begin{tikzpicture}[xscale=2.0,yscale=-1.2]     \node (A0_0) at (0, 0) {$\stX$};     \node (A0_1) at (1, 0) {$\Fib_{\stX,\shC}$};     \node (A0_2) at (2, 0) {$V$};     \node (A0_3) at (3, 0) {$\Spec R$};     \node (A1_2) at (2, 1) {$U$};     \node (A1_3) at (3, 1) {$\Spec \Hl^0(\odi U)$};     \path (A0_0) edge [->,bend left=15]node [auto] {$\scriptstyle{}$} (A1_2);     \path (A0_1) edge [->,dashed]node [auto] {$\scriptstyle{}$} (A0_2);     \path (A0_3) edge [->]node [auto] {$\scriptstyle{}$} (A1_3);     \path (A0_2) edge [->]node [auto] {$\scriptstyle{}$} (A1_2);     \path (A0_0) edge [->]node [auto] {$\scriptstyle{}$} (A0_1);     \path (A0_0) edge [->,bend right=40]node [auto] {$\scriptstyle{}$} (A0_2);     \path (A1_2) edge [->]node [auto] {$\scriptstyle{}$} (A1_3);     \path (A0_2) edge [->]node [auto] {$\scriptstyle{}$} (A0_3);   \end{tikzpicture}   \] where
the square one is Cartesian. The dashed arrow exists thanks to the
description of $\Fib_{\stX,\shC}$ as intersection. From the same
diagram we can see that $\Fib_{\stX,\shC}\to U$ is uniquely determined.
The general case of a pseudo-affine sheaf follows easily from \ref{prop:arbitrary intersection of quasi-compact open is pseudo-affine}.
\end{proof}
\begin{prop}
\label{prop:baby case algebraic}Let $\stX$ be a quasi-compact fibered
category. If $\stX\to\Spec\Hl^{0}(\odi{\stX})$ has open image then
$\Fib_{\stX,\{\odi{\stX}\}}$ coincides with this image. Conversely
if $\Fib_{\stX,\{\odi{\stX}\}}$ is an algebraic stack, $\stX\to\Spec R$
is a finitely presented algebraic stack and $\Hl^{0}(\odi{\stX})$
is a Noetherian Jacobson ring then $\stX\to\Spec\Hl^{0}(\odi{\stX})$
has open image.
\end{prop}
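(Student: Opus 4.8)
Throughout I would use the description from Proposition~\ref{prop:the simplest category}: writing $S=\Hl^{0}(\odi{\stX})$ and $\phi\colon\stX\to\Spec S$ for the canonical map, the sheaf $\Fib_{\stX,\{\odi{\stX}\}}$ equals the (sheaf) intersection $\bigcap_{U}U$ of all quasi-compact open subschemes $U\subseteq\Spec S$ that contain the image $T:=\Imm\phi$. The easy direction is then immediate: if $T$ is open then, $\stX$ being quasi-compact, $T$ is a quasi-compact open subscheme of $\Spec S$ containing $T$, so $T$ itself belongs to the family $\{U\}$ and, being contained in every member of it, is its least element. Hence $\bigcap_{U}U=T$ and $\Fib_{\stX,\{\odi{\stX}\}}$ coincides with the open image $T$.

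For the converse set $W=\Fib_{\stX,\{\odi{\stX}\}}$; the plan is to prove $T=W$ and that $W$ is open in $\Spec S$. The first, geometric, input is that $W$ is an open subscheme of $\Spec S$. Indeed $W$ is pseudo-affine by Theorem~\ref{prop:arbitrary intersection of quasi-compact open is pseudo-affine}, so if it is algebraic it is quasi-affine by Corollary~\ref{cor:pseudo-affine algebraic is quasi-affine}; since $\stX$ is finitely presented, hence quasi-compact and quasi-separated, Proposition~\ref{prop:the simplest category} gives $\odi{W}\simeq\shP_{\{\odi{\stX}\}*}\odi{\stX}$ and so $\Hl^{0}(\odi{W})=\Hl^{0}(\odi{\stX})=S$; the canonical open immersion of a quasi-affine scheme into $\Spec\Hl^{0}(\odi{W})$ then exhibits $W$ as an open subscheme of $\Spec S$. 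At the level of points, $|W|=\bigcap_{U}|U|$ is exactly the set of generizations of points of $T$, since $x$ lies in every open containing $T$ if and only if $\overline{\{x\}}\cap T\neq\emptyset$. In particular every closed point of $\Spec S$ lying in $W$ already lies in $T$: being its own closure, such a point can only generize a point of $T$ by equalling it.

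The second input is constructibility of $T$, obtained from Chevalley's theorem. I would choose a smooth surjective chart $p\colon\Spec B\to\stX$ with $B$ of finite type over $R$; the factorization $R\to S\to B$ shows that $B$ is of finite type over the Noetherian ring $S$, so $\phi\circ p$ is a finite-type morphism of Noetherian schemes and $T=\Imm(\phi\circ p)$ is constructible. The two inputs combine through the Jacobson hypothesis: $W\setminus T$ is constructible, being the difference of the open $W$ and the constructible $T$; were it nonempty it would, by Jacobsonness of $S$, contain a closed point of $\Spec S$, which lies in $W$ and is therefore in $T$ by the previous paragraph --- a contradiction. Hence $T=W$ is open, as desired.

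The main obstacle I anticipate is the converse, and within it two points require care. The indispensable technical step is upgrading the finite-type hypothesis from $R$ to $S$ so that Chevalley applies to $\phi$; and the identification $\Hl^{0}(\odi{W})=S$, needed to realise $W$ as an \emph{open} subscheme of $\Spec S$ rather than merely as a quasi-affine scheme, is the other place where one must be precise. Once these are in hand, the Jacobson dichotomy (constructibility of $T$ together with the generization description of $W$) forces $T=W$ essentially formally.
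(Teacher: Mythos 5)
Your proof is correct and follows essentially the same route as the paper's: reduce to $R=\Hl^{0}(\odi{\stX})$, use pseudo-affineness plus algebraicity to get quasi-affineness of $\Fib_{\stX,\{\odi{\stX}\}}$ and the identification $\Hl^{0}(\odi{\Fib})=S$ to realize it as an open subscheme of $\Spec S$, apply Chevalley to get constructibility of the image, and conclude with the Jacobson property producing a closed point. Your two refinements --- making explicit the finite-type descent from $R$ to $S$ via an affine chart, and phrasing the final contradiction through the generization description of $|\Fib|$ rather than through minimality of the open --- are just more detailed versions of steps the paper compresses.
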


\begin{proof}
Set $\shC=\{\odi{\stX}\}$. The first claim follows from \ref{prop:the simplest category},
so let us focus on the converse.

We can assume $R=\Hl^{0}(\odi{\stX})$. By \ref{cor:pseudo-affine algebraic is quasi-affine}
and \ref{prop:the simplest category}, the fact that $\Fib_{\stX,\shC}$
is algebraic means that it is quasi-affine. Moreover, since $\Hl^{0}(\odi{\Fib_{\stX,\shC}})=R$,
it follows that $V=\Fib_{\stX,\shC}$ is open inside $\Spec R$. By
Chevalley's theorem \cite[Tag 054K]{SP014} the image $U$ of $\stX\to\Spec R$
is constructible. In conclusion, from \ref{prop:the simplest category},
we can conclude that $V$ is the smallest open of $\Spec R$ containing
$U$. Assume by contradiction $U\neq V$. As $V-U$ is constructible,
we need to show that a constructible subset $Q$ of $\Spec R$ contains
a closed subset. We can easily reduce to the case that $Q$ is open.
As $R$ is a Jacobson ring, $Q$ contains a closed point of $\Spec R$
as required.
\end{proof}
\begin{example}
\label{exa:counterexample} We show an example of an integral scheme
$X$ of finite type over a field $k$ such that
\begin{itemize}
\item the stack $\Fib_{X,\{\odi X\}}$ is not algebraic.
\item the map $X\to\Fib_{X,\{\odi X\}}$ is not surjective, which shows
that not all fiber functors (locally) comes from a section of $X$.
\end{itemize}
By \ref{prop:baby case algebraic}, the idea is to look for an $X$
such that the image of $X\to\Spec\Hl^{0}(\odi{\stX})$ is not open.
Let $A=k[x,y,z]$ and $B=k[x/y,y,z/y]$ and notice that
\[
A\subseteq B\subseteq A_{y}\subseteq A_{xy}=B_{x}
\]
We define $X$ as the scheme obtained by gluing $\Spec A_{x}$ and
$\Spec B$ along $\Spec A_{xy}$. By construction there are maps 
\[
\Spec A-V(x,y)=\Spec A_{x}\cup\Spec A_{y}\to X\xrightarrow{f}\Spec A
\]
As the first map is dominant because $\Spec A_{x}\subseteq X$ is
open, we obtain that $\Hl^{0}(\odi{\stX})=A$. Moreover 
\[
f^{-1}(V(x,y))=\Spec(B/(x,y))=\Spec(k[x/y,z/y])\to\Spec A
\]
is given by $A\to k[x/y,z/y]$, $x,y,z\to0$. It follows that the
image of $X\to\Spec A$ is $\Imm=\Spec A-V(x,y)\sqcup\{(x,y,z)\}$.

By \ref{prop:baby case algebraic} the stack $\Fib_{X,\{\odi X\}}$
cannot be algebraic because $\Imm$ is not open. Moreover we claim
that $P=(x,y)\in\Fib_{X,\{\odi X\}}-\Imm$. By \ref{prop:the simplest category}
we have to show that if $\Imm\subseteq U\subseteq\Spec A$ is open
then $P\in U$. This is true because otherwise we would have the contradiction
\[
P\in(\Spec A-U)\subseteq\Spec A-\Imm=V(P)-\{(x,y,z)\}\subseteq V(P)=\overline{\{P\}}
\]
\end{example}

\bibliographystyle{amsalpha}
\bibliography{biblio2}

\end{document}